\newcommand{\bN}{\mathbb{N}}
\newcommand{\bR}{\mathbb{R}}
\newcommand{\bC}{\mathbb{C}}
\newcommand{\eps}{\epsilon}
\newcommand{\veps}{\varepsilon}
\newcommand{\vfi}{\Phi}
\newcommand{\bw}{\bar{w}}
\newcommand{\cL}{\mathcal{L}}
\newcommand{\real}{{\rm Re}\,}
\newcommand{\imag}{{\rm Im}\,}
\newcommand{\sgn}{\,{\rm sgn}\,}
\newcommand{\cC}{\,{\mathcal C}\,}
\newcommand{\la}{\lambda}
\newcommand{\cs}{c^*}
\newcommand{\cP}{\,{\mathcal P}\,}
\newcommand{\cQ}{\,{\mathcal Q}\,}
\newcommand{\cPQ}{\,{\mathcal PQ}\,}
\newcommand{\Pcon}{\stackrel{\cP}{\rightarrow}}
\newcommand{\Qcon}{\stackrel{\cQ}{\rightarrow}}
\newcommand{\PQcon}{\stackrel{\cPQ}{\longrightarrow}}
\newtheorem{ilustracion}{Example}
\newtheorem{nota}{Remark}
\title{Approximating travelling waves by equilibria of non local equations}
\author{Jose M. Arrieta\thanks{Departamento de Matem\'atica Aplicada.
Universidad Complutense de Madrid, 28040 Madrid, Spain.  E-mail: ({\tt arrieta@mat.ucm.es})\-.
Partially supported by Grant MTM2009-07540 MICINN,  Grant GR58/08 UCM-BSCH Grupo 920894 and PHB2006-003 PC, MICINN, Spain} \and Mar\'ia
L\'opez-Fern\'andez\thanks{Institut f\"ur Mathematik. Universit\"at Z\"urich.
Winterthurerst. 190, CH-8057 Zurich, Switzertland
E-mail:({\tt maria.lopez@math.uzh.ch}). Partially supported by grants MTM 2008-03541 and  MTM 2010-19510, MICINN, Spain.}
\and{Enrique Zuazua
\thanks{Address 1: BCAM - Basque Center for Applied Mathematics, Bizkaia Technology Park,
B.500 E48160, Derio, Basque Country, Spain. E-mail: ({\tt
zuazua@bcamath.org}),  Address 2:  Ikerbasque Research Professor, Ikerbasque -
Basque Foundation for Science, E48011, Bilbao, Basque Country,
Spain. Partially supported by
Grant MTM2008-03541 of the MICINN, Spain,  the ERC Advanced Grant FP7-246775 NUMERIWAVES, the ESF Research Networking Programme OPTPDE
and the Grant PI2010-04 of the Basque Government.}}}
\begin{document}

\maketitle

\renewcommand{\thefootnote}{\fnsymbol{footnote}}



\begin{abstract}
We consider an evolution equation of parabolic type in $\bR$ having a
travelling wave solution. We perform an appropriate change of variables which
transforms the equation into a non local evolution one having a travelling wave
solution with zero speed of propagation with exactly the same profile as the
original one. We analyze the relation of the new equation with the original one
in the entire real line. We also analyze the behavior of the non local problem
in a bounded interval with appropriate boundary conditions and show that it has
a unique stationary solution which is asymptotically stable for large enough
intervals and that converges to the travelling wave as the interval approaches
the entire real line. This procedure allows to compute simultaneously the
travelling wave profile and its propagation speed avoiding moving meshes, as we
illustrate with several numerical examples.
\end{abstract}

%


\begin{keywords}
travelling waves, reaction--diffusion equations, implicit coordinate-change,  non-local equation, asymptotic stability, numerical approximation.
\end{keywords}
\begin{AMS}
35K55, 35K57, 35C07
\end{AMS}


\section{Introduction}\label{sec_intro}

We address the problem of the analysis and effective computation of travelling wave
solutions e\-mer\-ging from parabolic semilinear equations on the
real line:
\begin{equation}\label{originalpb}
\left\{
\begin{array}{l}
u_t(x,t) = \displaystyle u_{xx}(x,t) + f(u(x,t)), \qquad -\infty < x < +\infty, \quad t > 0, \\[5pt]
u(x,0) = u_0(x).
\end{array}
\right.
\end{equation}
We assume that $f \in C^1$ with $f(0)=f(1)=0$, so that $u=0$ and
$u=1$ are stationary solutions of \eqref{originalpb}. Under these assumptions,
if the initial data $u_0$ is piecewise
continuous and $0 \le u_0 \le 1$, there exists a unique bounded
classical solution $u(x,t)$ defined for all $t>0$ and,
due to the maximum principle, $0\le
u(x,t) \le 1$ for all $x,t$.


A travelling wave is a solution of the type $u(x,t)=\Phi(x-ct)$ where the function $\Phi$ is
the profile of the travelling wave and $c$ is the speed of propagation of the wave. For instance, if $c>0$ (resp. $c<0$) the
solution will consist of the profile $x\to \Phi(x)$ travelling in space to the right (resp. left) with speed $|c|$. Proofs of the existence of this
kind of solutions can be found in \cite{AroWei78, FiMc77, Henry} among others.

The asymptotic profile $\Phi$, when it exists, will have finite limits
at $\pm \infty$, either $\Phi(-\infty)=0$, $\Phi(\infty)=1$ or
$\Phi(-\infty)=1$, $\Phi(\infty)=0$. In the first case $\Phi$ will be a
solution to
\begin{equation}\label{ode}
\left\{\begin{array}{l}
\Phi''(\xi)+c\Phi'(\xi)+f(\Phi(\xi)) = 0, \qquad -\infty < \xi < +\infty, \\[5pt]
0\le \Phi \le 1, \qquad \Phi(-\infty)=0,\ \Phi(+\infty)=1,\qquad  \Phi'>0,
\end{array} \right.
\end{equation}
that is called a $[0,1]$-{\em wave front}. The monotonicity
condition on the profile $\Phi$ is not a restriction but rather an
intrinsic property of the travelling wave profiles, as it is shown
in \cite[Lemma 2.1.]{FiMc77}.

Note also that, by simply making the change of coordinates $\xi \to
-\xi$, to every pair $(\Phi,c)$ with $\Phi$ a monotone increasing
$[0,1]$-{\em wave front} corresponds a monotone decreasing
$[1,0]$-{\em wave front} with propagation speed $-c$. Thus, all what
follows applies to monotone decreasing solutions as well.

These profiles are well known to have the property of attracting,
as $t \to \infty$, the dynamics of a significant class
of solutions of the Cauchy problem \eqref{originalpb}, see for
instance Theorem~\ref{th:FiMc} below (from \cite{FiMc77}). In
\cite{FiMc77} it is proven that if $f$ is of bistable type satisfying
\begin{equation}\label{hipof_intro}
\left\{
\begin{array}{l}
f(0) = f(1) = 0, \\
f'(0) < 0, f'(1) < 0, \\
\exists  \alpha\in (0,1),\hbox{ s.t } f(u) < 0, \quad \mbox{for }\ u\in(0,\alpha), \quad f(u) > 0, \quad
\mbox{for }\ u\in(\alpha,1).
\end{array}\right.
\end{equation}
then, for a certain set of initial data $u_0$, the solution $u(x,t)$
of \eqref{originalpb} evolves into a travelling wave $\Phi(x-x_0-ct)$,
for a certain $x_0\in \bR$ depending on the initial datum $u_0$,
i.e.,
\begin{equation}\label{utoU_intro}
|u(x,t)-\Phi(x-x_0-ct)| \to 0, \qquad \mbox{as }\ t \to \infty.
\end{equation}
Further, the convergence in \eqref{utoU_intro} is shown to be
uniform in $x$ and exponentially fast in $t$.

From the numerical point of view, one of the main difficulties in
the approximation of these asymptotic solutions $\Phi$ and their
propagation speed $c$ is the need of setting a finite computational
domain. While the solution $u$ evolves into $\Phi$, it also moves left
or right at velocity $c$ and it eventually leaves the chosen finite
computational domain. A natural approach is then to perform the
change of variables $u(x,t)=v(x-ct,t)$, so that the resulting
initial value problem for $v$ converges to a stationary solution,
i.e, $v_t \to 0$ as $t\to\infty$. However, in general, the value of
$c$ is not known {\em a priori}.

The issue of having a priori characterizations of the velocity of
propagation $c$ then plays an important role from a computational
viewpoint. It has been addressed in a number of articles. For
instance, in \cite{VVV} explicit mini-max representations for the speed of
propagation $c$
are provided.
Unfortunately, the expressions in \cite{VVV}  are difficult
to handle in practice for the effective computation of $c$ and $\Phi$.

In the present paper, we follow and further develop the approach
introduced in \cite{BeTh04}, where a new unknown $\gamma(t)$ is
added to the problem, to perform the change of variables
\begin{equation}\label{cambiov}
u(x,t) = v(x-\gamma(t),t).
\end{equation}
Then, $v$ satisfies the equation
$$
v_t(x,t) = v_{xx}(x,t) + \gamma'(t)v_x(x,t) + f(v).
$$
Our goal is to determine $\gamma=\gamma (t)$ a priori so as to ensure that, as
time evolves, it converges to the asymptotic speed $c$ of the travelling wave.
Of course, in order to compensate for the additional unknown $\gamma$, one has
to add a so called ``phase condition", that is, an additional equation linking
$v$ and $\lambda:=\gamma'$. Two different possibilities were proposed in
\cite{BeTh04}. One of them consists on minimizing the $L^2$-distance of the
solution $v$ to a given template function $\hat{v}(x)$, which must satisfy
$\hat{v}-\Phi \in H^1(\bR)$. This approach leads to a Partial Differential
Algebraic Equation of the form
\begin{equation}\label{template_modifpb}
\left\{
\begin{array}{l}
v_t = \displaystyle v_{xx} + \lambda v_x+ f(v), \quad -\infty < x <
\infty, \quad t>0,
\\[1em]
0 = \langle \hat{v}',v-\hat{v} \rangle,
\\[1em]
v(x,0) = u_0(x),
\end{array}\right.
\end{equation}
where $\langle \cdot, \cdot \rangle$ denotes the inner product in
$L^2(\bR)$. This system is locally (in time) equivalent to the
original one \eqref{originalpb}, provided that the implicit function
theorem can be applied to the equation
$$
\vfi(\gamma,t):=\langle \hat{v}',u(\cdot+\gamma(t))-\hat{v}
\rangle=0,
$$
to obtain $\gamma(t)$ and $v(x,t)=u(x+\gamma(t),t)$, once $u$ is given, see
\cite[Theorem 2.9]{BeTh04}. The approximation properties of
\eqref{template_modifpb} and its numerical discretizations have been studied in
detail in \cite{Th05, Th08,Th08-II}. 
This approach is useful as long as a suitable template mapping $\hat{v}$, close
enough to $\Phi$, is available and the initial data $u_0$ belongs to
$\Phi+H^1(\bR)$, too. From the computational point of view, the
semidiscretization in space of \eqref{template_modifpb} leads to a Partial
Differential Algebraic Equation of index 2. This means that it is necessary to
differentiate twice the algebraic constraint in order to eliminate it and get
the underlying Partial Differential Equation, see for instance \cite[Chap.
VII]{HaiWan}.

A more global approach, also proposed in \cite{BeTh04}, is obtained
by minimizing $\|v_t\|_2$. This yields the augmented system
\begin{equation}\label{vt_modifpb}
\left\{
\begin{array}{l}
v_t = \displaystyle v_{xx} + \lambda v_x+ f(v), \quad -\infty < x <
\infty, \quad t>0,
\\[1em]
0 = \lambda \langle v_x, v_x \rangle + \langle f(v), v_x \rangle,
\\[1em]
v(x,0) = u_0(x),
\end{array}\right.
\end{equation}
which is a Partial Differential Algebraic Equation of index 1.
Assuming that the products above are well defined and
$\|v_x(\cdot,t)\|_2 \neq 0$, $t>0$, the phase condition
$$
0 = \lambda \langle v_x, v_x \rangle + \langle f(v), v_x \rangle
$$
yields directly
\begin{equation}\label{gamma_intro}
\gamma(t) = -\int_0^t \frac{\langle f(u(\cdot,s)),u_x(\cdot,s)
\rangle} {\langle u_x(\cdot,s),u_x(\cdot,s) \rangle} \, ds =
-\int_0^t \frac{\langle f(v(\cdot,s)),v_x(\cdot,s) \rangle} {\langle
v_x(\cdot,s),v_x(\cdot,s) \rangle} \, ds,
\end{equation}
and one gets the following nonlocal semilinear equation
\begin{equation}\label{modifpb}
\left\{
\begin{array}{l}
v_t = \displaystyle v_{xx} - \frac{\langle f(v),v_x \rangle}{\langle v_x,v_x \rangle}\,
v_x+ f(v), \quad -\infty < x < \infty, \quad t>0,
\\[1em]
v(x,0) = u_0(x).
\end{array}\right.
\end{equation}
Definition \eqref{gamma_intro} is in fact quite natural if we notice
that, after multiplicating by $\Phi'$ in \eqref{ode} and integrating
along the real line, we obtain
\begin{equation} \label{cquotient}
c = - \frac{\langle f(\Phi),\Phi' \rangle}{\langle \Phi',\Phi' \rangle}.
\end{equation}
Thus, the equation for $\Phi$ can be written in the form
\begin{equation}\label{ode_nonlocal}
\Phi'' - \frac{\langle f(\Phi),\Phi' \rangle}{\langle \Phi',\Phi' \rangle} \Phi' +
f(\Phi) = 0.
\end{equation}
Equation \eqref{modifpb} is a time evolving version of
\eqref{ode_nonlocal}. It is natural to expect that, as $t\to
\infty$, system \eqref{modifpb} will yield both the speed of
propagation and the profile of the travelling wave.

We notice that \eqref{cquotient} is equivalent to
\begin{equation} \label{cquotient2}
c = - \frac{F(1)}{\langle \Phi',\Phi' \rangle},
\end{equation}
with
\begin{equation}\label{potencial}
F(u)=-\int_0^u f(s)\, ds.
\end{equation}
This observation leads in a natural way, to
the following alternative definition of $\gamma(t)$ in
\eqref{cambiov}
\begin{equation}\label{gamma_intro2}
\gamma(t) = -\int_0^t \frac{F(1)} {\langle v_x(\cdot,s),v_x(\cdot,s)
\rangle} \, ds,
\end{equation}
which yields the nonlocal semilinear problem
\begin{equation}\label{modifpb2}
\left\{
\begin{array}{l}
v_t = \displaystyle v_{xx} - \frac{F(1)}{\langle v_x,v_x \rangle}\,
v_x+ f(v), \quad -\infty < x < \infty, \quad t>0,
\\[1em]
v(x,0) = u_0(x).
\end{array}\right.
\end{equation}
We will show that \eqref{modifpb2} enjoys in fact similar properties
to those of \eqref{modifpb} and, to our knowledge, it has never been
used in practice to approximate travelling waves and their
propagation speed.

In \cite{BeTh04}, the use of \eqref{gamma_intro} or, in other words,
\eqref{modifpb} or \eqref{modifpb2}, is regarded to be particularly useful near relative
equilibria and good numerical results are reported.  Moreover, observe that analyzing these two equations
 requires less {\em a priori}
knowledge of the asymptotic state than analyzing  \eqref{template_modifpb} (no
template function in $\Phi+H^1(\bR)$ is required), and it allows to consider
more general initial data. Moreover, equation \eqref{modifpb} is simpler to
approximate numerically than \eqref{template_modifpb}. However, to our
knowledge, no rigorous asymptotic analysis of the modified equation
\eqref{modifpb}, \eqref{modifpb2} seems to be available. In fact, in later
works by the authors \cite{BeTh07,Th08,Th08-II}, where the effects of
discretizations are also taken into account, this kind of phase condition is
not analyzed, focusing only on the extended system \eqref{template_modifpb}.

In this work, we set necessary conditions for the well-posedness of these two
new initial value problems \eqref{modifpb}, \eqref{modifpb2} and analyze the
relation between these two problems and the original one \eqref{originalpb}. We
will prove that both problems  \eqref{modifpb}, \eqref{modifpb2} have the one
parameter family  of  travelling waves with $c=0$ speed of propagation
$\Phi(x-a)$,  $a\in \bR$, (``standing wave'') with the same profile $\Phi$ of the
original equation  \eqref{originalpb}.  Moreover,  under appropriate
assumptions on the initial data $u_0$,  we prove that $\gamma'(t)\to c$, as
$t\to\infty$ (and we recover the speed of propagation of the travelling wave
of the original problem), and the solutions of \eqref{modifpb},
\eqref{modifpb2} converge exponentially fast to one of these standing waves.

Once the modified problems \eqref{modifpb}, \eqref{modifpb2} are understood and
shown to converge to an equilibrium state with the same profile $\Phi$ as the
travelling wave for \eqref{originalpb}, the problem of its numerical
approximation arises naturally, but can be addressed quite more easily because,
now, one does not need to address the issue of moving the frames as time
evolves. To this end it is necessary to truncate the spatial domain and add
some reasonable artificial boundary conditions.  This motivates the analysis of
problems  \eqref{modifpb},  \eqref{modifpb2} in a bounded spatial interval
$(a,b)$ with certain ``artificial'' boundary conditions.  We have chosen non homogeneous boundary conditions of Dirichlet type which emulate
the behavior of the travelling wave in the complete real line, that is,
\begin{equation}\label{modifpb_interval-intro}
\left\{
\begin{array}{l}
\displaystyle  v_t = v_{xx} - \frac{F(1)}{\|v_x(\cdot)\|_{L^2(a,b)}^2} v_x +
f(v),
\qquad x \in (a,b) , \quad t>0, \\[1em]
v(a,t) = 0 ; \ v(b,t) = 1, \qquad t>0, \\
v(x,0) = u_0(x), \qquad x \in [a,b].
\end{array}
\right.
\end{equation}
Observe that when restricting both equations  \eqref{modifpb},  \eqref{modifpb2}
to a bounded interval and impossing $v(a)=0$, $v(b)=1$ we obtain in both cases
the very same
equation, which is the one given above in \eqref{modifpb_interval-intro}.

We analyze equation \eqref{modifpb_interval-intro} and show that with the nonlinearity $f$ satisfying
\eqref{hipof_intro} we have a unique stationary state $\Phi_{(a,b)}$ with  $0\leq \Phi_{(a,b)}\leq 1$. Moreover, this
stationary state, when normalized so that $\Phi_{(a,b)}(0)=1/2$,  will converge to the profile of the travelling wave of equation \eqref{modifpb},  \eqref{modifpb2}
as $(a,b)\to (-\infty,+\infty)$ (see the details in Section \ref{subsec-convergence-stationary}).
We also analyze the stability properties of this stationary state. In order to
accomplish this, we will need to analyze the spectral properties of the linearization of \eqref{modifpb_interval-intro}
around the stationary state, which means to analyze the spectra of the ``nonlocal operator''
$$
\begin{array}{l}
\displaystyle Lw=w_{xx}-\frac{F(1)}{\|\Phi'_{(a,b)}\|_{L^2(a,b)}}w_x+f'(\Phi_{(a,b)}(x))w \\
\displaystyle\qquad\qquad\qquad\qquad- \frac{2F(1)}{\|\Phi'_{(a,b)}\|_{L^2(a,b)}^2}\Phi'_{(a,b)} \int_a^b w'(x)\Phi'_{(a,b)}(x)dx.
\end{array}
$$

This task is not a simple one. There are in the literature several works which
analyze the spectra of operators of the type above, see
\cite{DaDo06a,DaDo06b,Do08,Frei94,Frei99}, but none of them are conclusive
enough to characterize it completely in our case.

Nevertheless, we will be able to show that $\sigma(L)\subset \{z\in\bC,
\real(z)<-\kappa(a,b)\}$ for certain $\kappa(a,b)>0$ when the length of the
interval is large enough (that is, for $b-a\to +\infty$). We will obtain this
result via a perturbative argument, viewing the operator $L_{(a,b)}$ as a
perturbation of the operator on $(a,b)=\bR$, that is
$$
\begin{array}{l}
\displaystyle Lw=w_{xx}-\frac{F(1)}{\|\Phi'\|_{L^2(\bR)}}w_x+f'(\Phi(x))w \\
\displaystyle\qquad\qquad\qquad\qquad- \frac{2F(1)}{\|\Phi'\|_{L^2(\bR)}^2}\Phi' \int_\bR w'(x)\Phi'(x)dx,
\end{array}
$$for which the spectra is easier to characterize, since $\Phi'$ is the eigenfunction associated to the eigenvalue $0$ or
the operator $L_\infty$.

We will conclude in this way the asymptotic stability of the stationary
solution $\Phi_{(a,b)}$ for large enough intervals $(a,b)$.

The analysis in the present work is performed for non-homogeneous Dirichlet
boundary conditions. This choice is justified since, somehow, it imitates the
behavior of the travelling wave for large enough intervals. Nevertheless, other
boundary conditions may be suitable to approximate the travelling wave although
the dynamics of system \eqref{modifpb_interval-intro} with these other boundary
conditions may differ from from the case treated in this paper. As a matter of
fact an analysis of the differences and similarities for different boundary
conditions will be important and will be carried out in a future work.

Let us notice that the idea of performing a change of coordinates so that the
front of the asymptotic profile $\Phi$ remains eventually fixed in space
appears also in \cite{TaSlee}, where a different change of variables is
considered. This alternative change of variables is based on the fact that it
also holds
\begin{equation} \label{cintegral}
c = - \int_{-\infty}^{\infty} f(\Phi(\xi))\,d\xi.
\end{equation}
Formula \eqref{cintegral} is readily obtained after integration along the real
line in \eqref{ode} and leads to the change of coordinates $u(x,t) =
v(x-\tilde{\gamma}(t))$ with
$$
\tilde{\gamma}(t) = -\int_0^t \int_{-\infty}^{\infty} f(u(x,s))\,dx
\,ds.
$$
The convergence of the solutions of the resulting equation to an equilibrium is not proved in
\cite{TaSlee}.  In this paper, we do not study this particular change of variables although we
 expect that by adjusting the techniques we develop here we will be able to obtain similar
 results.  As a mattter of fact, we regard the analysis included in this paper as a general
 technique that, with the appropriate adjusments to the different possible changes of variables
 which make the velocity of propagation implicit in the equation, will yield in an effective
 way both, the speed of propagation and the profile of the travelling wave.

The paper is organized as follows. Sections \ref{section-estimates} and
\ref{subsec_changev} are devoted to the deduction and analysis of the problem
in the whole real line.  In Section~\ref{section-estimates} besides recalling
the result on existence of travelling waves from \cite{FiMc77}, we also obtain
several important estimates of the solutions of the original problem
\eqref{originalpb} when the initial condition $u_0$ satisfies $u_0\in
L^\infty(\bR)$ and $\partial_x u_0\in L^p(\bR)$, $1\leq p<\infty$.

 The change of variables that leads to the modified problema
\eqref{modifpb}, \eqref{modifpb2} is considered in Section~\ref{subsec_changev}, where
we establish a fundamental relation between these new problems and the
original one \eqref{originalpb}.  The estimates obtained in Section \ref{section-estimates} are used in
a crucial way in this section. We show the asymptotic stability, with asymptotic phase,
 of the family of travelling wave solutions of the nonlocal problems \eqref{modifpb}, \eqref{modifpb2},
 see Theorem \ref{th:v}.

The next two sections, Section \ref{existence-bounded-interval} and Section
\ref {stability-bounded-interval} are devoted to the nonlocal problem in a
bounded interval. In Section \ref{existence-bounded-interval} we obtain the
existence and uniqueness of an stationary solution of problem
\eqref{modifpb_interval-intro} and  show that the stationary solution converges
to the profile of the travelling wave solution in the entire real line. In
order to accomplish this, we will need to perform a careful analysis of the
behavior of the associated local problems in a bounded domain, see Subsection
\ref{subsec-local-bounded}, and then to relate the results obtained for the
local and nonlocal problem, see Subsection \ref{subsec-nonlocal-bounded}. The
convergence of the stationary states to the travelling wave is obtained in
Subsection \ref{subsec-convergence-stationary}. In Section
\ref{stability-bounded-interval} we show the asymptotic stability of the
stationary state of the non local problem in a bounded interval. We analyze
first the properties of the spectra of the linearized non local equation in a
bounded interval, see Subsection \ref{subsec-spectral-bounded}, and in
the complete real line, see Subsection \ref{subsec-spectral-nonlocal-bounded}.
In Subsection \ref{subsec-spectral-convergence} we obtain the asymptotic
stability of the stationary states for large enough intervals.  This is
obtained through a spectral perturbation argument.

Finally in Section \ref{sec_numerical} we include several mumerical examples which
 illustrate the efficiency of the methods developed in this article to capture the asymptotic
 travelling wave profile and its velocity of propagation.

\par\bigskip{\bf Ackowledgements.} We would like to thank Erik Van Vleck, Pedro Freitas,
Wolf-J\"urgen Beyn and Nicholas Alikakos for the different discussions we had with them
and for pointing out several important aspects of the subject of this paper.

\section{Estimates for the original problem \eqref{originalpb}}
\label{section-estimates}

We start reviewing some of the results in \cite{FiMc77} and \cite{Henry} on the
existence and behavior of travelling wave solutions of \eqref{originalpb}.

The following theorem of \cite{FiMc77} ensures the existence and
uniqueness of an asymptotic travelling front for the original
problem \eqref{originalpb} under quite general assumptions on the
initial data $u_0$.

\begin{theorem}\label{th:FiMc}
Let $f\in C^1[0,1]$ satisfying \eqref{hipof_intro}. Then there exists a unique
(except for translations) monotone travelling front with range $[0,1]$, i.e.,
there exists a unique $\cs$ and a unique (except for translations) monotone
solution $\Phi$ of \eqref{ode}.

Suppose that $u_0$ is piecewise continuous, $0\le u_0(x)\le 1$ for
all $x\in \bR$, and
\begin{equation}\label{hipou0}
\liminf_{x\to +\infty} u_0(x) > \alpha, \qquad \limsup_{x\to
-\infty} u_0(x) < \alpha.
\end{equation}

Then there exist $x_0\in \bR$, $K,\omega >0$, such that the solution $u$ to
\eqref{originalpb} satisfies
\begin{equation}\label{fifemcl_expconv}
|u(x,t)-\Phi(x-\cs t-x_0)| < Ke^{-\omega t}, \qquad x\in\bR,\quad t>0.
\end{equation}
Furthermore, $\cs \ge 0 \ (resp. \le 0)$ when $F(1)=-\int_0^1 f(s)\, ds \ge 0 \
(resp. \le 0)$.

\end{theorem}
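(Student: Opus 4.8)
The plan is to establish the three assertions in increasing order of difficulty, following the phase-plane and comparison-principle strategy of \cite{FiMc77}. I would begin with the sign statement, which is the cheapest and serves as a warm-up. Granting a monotone profile $\Phi$ solving \eqref{ode}, I multiply that equation by $\Phi'$ and integrate over $\bR$. The term $\int_\bR \Phi''\Phi'\,d\xi=\tfrac12\bigl[(\Phi')^2\bigr]_{-\infty}^{+\infty}$ vanishes because $\Phi'\to0$ at $\pm\infty$, while the substitution $s=\Phi(\xi)$ gives $\int_\bR f(\Phi)\Phi'\,d\xi=\int_0^1 f(s)\,ds$. Hence $\cs\,\|\Phi'\|_{L^2(\bR)}^2=-\int_0^1 f(s)\,ds=F(1)$ with $F$ as in \eqref{potencial} --- equivalently, this is relation \eqref{cquotient}. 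Since $\|\Phi'\|_{L^2(\bR)}^2>0$ this yields $\sgn(\cs)=\sgn(F(1))$, the last assertion.

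For the existence and uniqueness of the front I would pass to the phase plane, writing \eqref{ode} as the first order system $\Phi'=\psi$, $\psi'=-c\psi-f(\Phi)$ and seeking a heteroclinic orbit from $(0,0)$ to $(1,0)$. Because $f'(0)<0$ and $f'(1)<0$, the linearization at each rest point has eigenvalues of opposite sign (their product equals $f'(0)$, resp. $f'(1)$), so both are saddles and a connection between them is codimension one, with $c$ as the free parameter. I would shoot along the branch of the unstable manifold of $(0,0)$ entering $\{\Phi>0,\ \psi>0\}$ and ask whether it meets the stable manifold of $(1,0)$. Increasing $c$ makes $\psi'$ more negative and pushes this orbit monotonically downward; combined with an intermediate-value argument this isolates a unique $\cs$ for which the connection closes, and the same monotonicity forces uniqueness of the speed. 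Once $\cs$ is fixed, uniqueness of the orbit up to the translation invariance of the autonomous system gives uniqueness of $\Phi$ up to spatial translation, and $\psi=\Phi'>0$ along the connection yields monotonicity.

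The genuinely hard assertion is the convergence estimate \eqref{fifemcl_expconv} together with its asymptotic phase $x_0$, for which I would rely on the comparison principle. The spectral input is obtained by differentiating \eqref{ode}: $\Phi'$ is a strictly positive solution of the linearized problem $L\phi:=\phi''+\cs\phi'+f'(\Phi)\phi=0$, so $0$ is the principal eigenvalue of $L$ and, because $f'(0),f'(1)<0$ make the essential spectrum lie in a left half-plane, the rest of the spectrum sits strictly to its left with a gap $\omega>0$; the zero eigenvalue merely reflects translation invariance. To turn this into convergence I would use the classical Fife--McLeod sub/supersolution pair
\begin{equation*}
w^{\pm}(x,t)=\Phi\bigl(x-\cs t\pm\xi(t)\bigr)\pm q(t),
\end{equation*}
with $q(t)=\delta e^{-\omega t}$ and $\xi(t)=\tfrac{\delta}{\omega}\bigl(1-e^{-\omega t}\bigr)$ for a small $\delta>0$; a direct computation, again using $f'(0),f'(1)<0$ to dominate the nonlinear term in the regions where $\Phi$ is near $0$ or $1$, shows that $w^-$ is a subsolution and $w^+$ a supersolution of \eqref{originalpb}.

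Two steps then remain. First, a preliminary squeezing that uses the bistable hypothesis \eqref{hipof_intro} and the threshold condition \eqref{hipou0}: since $f<0$ below $\alpha$ and $f>0$ above $\alpha$, the solution is driven toward $0$ where it starts below the threshold and toward $1$ where it starts above it, so after a finite time it acquires the right shape and can be trapped between $w^-$ and $w^+$ for an appropriate initial phase. Second, letting $t\to\infty$ so that $q(t)\to0$ and $\xi(t)$ converges to the limiting shift pins down $x_0$ and delivers the uniform exponential rate $\omega$. I expect the first of these, making general data fit between the ordered pair while keeping track of the phase, to be the main obstacle: it is where the global nonlinear dynamics, rather than the linear spectral gap, enters, and it is precisely the role of \eqref{hipou0} to bring arbitrary admissible data into the local basin where the squeeze applies.
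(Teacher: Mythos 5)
Your treatment of the sign of $\cs$ (multiply \eqref{ode} by $\Phi'$ and integrate, which is exactly identity \eqref{cquotient}) and of existence/uniqueness of the front (phase-plane shooting between the saddles $(0,0)$ and $(1,0)$, with monotonicity in $c$ of the unstable separatrix) is correct and is the standard route; the paper itself uses the same phase-plane picture later, in Lemma~\ref{unstable-manifolds}, for the bounded-interval analogue. The convergence part, however, has a genuine gap. A single application of the Fife--McLeod pair
\begin{equation*}
w^{\pm}(x,t)=\Phi\bigl(x-\cs t\pm\xi(t)\bigr)\pm q(t),\qquad q(t)=\delta e^{-\omega t},\ \ \xi(t)=\tfrac{\delta}{\omega}\bigl(1-e^{-\omega t}\bigr),
\end{equation*}
only traps the solution between two \emph{different} translates up to an exponentially small error: as $t\to\infty$ you get $\Phi(x-\cs t-\tfrac{\delta}{\omega})\le \liminf$ and $\limsup\le\Phi(x-\cs t+\tfrac{\delta}{\omega})$. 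This is precisely Lemma~\ref{lema41FiMc} ($x_1\ne x_2$ in general), not the conclusion \eqref{fifemcl_expconv}; ``letting $t\to\infty$'' does not pin down a single phase $x_0$, because the sub- and supersolution phases converge to distinct limits $\mp\delta/\omega$. Closing this window is the actual content of the theorem, and it is where the paper's source does the real work: as the summary after the statement says, the Fife--McLeod proof is ``based on the construction of a Liapunov functional'' for \eqref{pbw}, with the comparison lemmas (Lemmas~\ref{lema41FiMc} and \ref{lema43FiMc}) serving only as intermediate steps. Alternatives that would also close the gap are Henry's spectral argument (asymptotic stability with asymptotic phase from the algebraically simple zero eigenvalue of \eqref{linop}, cf.\ Remark~\ref{nota_exprate}) or an iterated squeezing in which the trapping is reapplied with geometrically decreasing $\delta$ so that the phase window contracts to a point; your sketch invokes neither and so stops one step short of \eqref{fifemcl_expconv}.

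A secondary imprecision: the decay rate $\omega$ in your sub/supersolution computation is controlled by $\min\{-f'(0),-f'(1)\}$ (the strength of the stable equilibria, which is what makes $w^{\pm}$ a sub/supersolution where $\Phi$ is near $0$ or $1$), not by the spectral gap of $L$; conflating the two obscures why the paper needs both $\beta$ and $\gamma$ in \eqref{exprate}. Your observation that \eqref{hipou0} is what brings general data into the trapping regime is correct and is indeed where the bistable structure enters.
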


{\sc Summary of the Proof of Theorem~\ref{th:FiMc} in \cite{FiMc77}}. For
$\cs$ in the statement of Theorem~\ref{th:FiMc}, set
\begin{equation}\label{defw}
w(x,t)= u(x + \cs t,t),
\end{equation}
which fulfils
\begin{equation}\label{pbw}
\left\{
\begin{array}{l}
w_t(x,t) = \displaystyle w_{xx}(x,t) + \cs  w_x(x,t) + f(w(x,t)), \quad -\infty
< x < \infty, \ t > 0, \\[5pt]
w(x,0) = u_0(x).
\end{array}
\right.
\end{equation}
The proof is then based on the construction of a
Liapunov functional for equation \eqref{pbw}. The main tools are a
priori estimates and comparison principles for parabolic equations
\cite[Theorem 4 of Chapter 7 and Theorem 5 of Chapter 3]{Fried}.
The following two Lemmas are important intermediate steps in this
construction and will be used in Section~\ref{subsec_changev}.

\begin{lemma}\label{lema41FiMc}
Under the assumptions of Theorem~\ref{th:FiMc}, there exist
constants $x_1$, $x_2$, $q_0$ and $\mu$, with $q_0, \mu > 0$, such
that
\begin{equation}\label{lemFi}
\Phi(x-x_1) - q_0 e^{-\mu t} \le w(x,t) \le \Phi(x-x_2) + q_0 e^{-\mu t}.
\end{equation}
\end{lemma}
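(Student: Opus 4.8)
The plan is to sandwich $w$ between a subsolution and a supersolution of the special shape
\[
\underline{w}(x,t)=\Phi(x-\eta(t))-\epsilon_0e^{-\mu t},\qquad
\bar w(x,t)=\Phi(x-\xi(t))+\epsilon_0e^{-\mu t},
\]
where the decaying amplitude and the slowly varying phases $\eta,\xi$ are to be chosen, and then to read off \eqref{lemFi} from these. Since $\Phi$ solves $\Phi''+\cs\Phi'+f(\Phi)=0$, substituting $\bar w$ into the operator $N[v]:=v_t-v_{xx}-\cs v_x-f(v)$ and using $\Phi''=-\cs\Phi'-f(\Phi)$ cancels all the front terms and leaves, with $\zeta=x-\xi(t)$ and $\theta$ between $\Phi(\zeta)$ and $\Phi(\zeta)+\epsilon_0e^{-\mu t}$ (mean value theorem),
\[
N[\bar w]=-\xi'(t)\,\Phi'(\zeta)-\mu\,\epsilon_0e^{-\mu t}-f'(\theta)\,\epsilon_0e^{-\mu t},
\]
and the analogous expression with reversed signs for $\underline w$. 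Prescribing $\xi'(t)=-\sigma\epsilon_0e^{-\mu t}$ and $\eta'(t)=+\sigma\epsilon_0e^{-\mu t}$ for a constant $\sigma>0$ reduces the supersolution inequality $N[\bar w]\ge0$ (and likewise $N[\underline w]\le0$) to the single pointwise condition $\sigma\,\Phi'(\zeta)\ge\mu+f'(\theta)$ for all $\zeta\in\bR$.

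I verify this inequality by exploiting the bistable structure. Since $f\in C^1$ with $f'(0),f'(1)<0$, after a $C^1$ extension of $f$ past $0$ and $1$ there are $\delta_0>0$ and $\beta>0$ with $f'(u)\le-\beta$ for $u\in[-\delta_0,\delta_0]\cup[1-\delta_0,1+\delta_0]$. Taking $\epsilon_0\le\delta_0/2$, on the tails $\{\Phi(\zeta)\le\delta_0/2\}\cup\{\Phi(\zeta)\ge1-\delta_0/2\}$ the point $\theta$ stays in that region, so $\mu+f'(\theta)\le\mu-\beta\le0\le\sigma\Phi'(\zeta)$ provided $\mu\le\beta$ (recall $\Phi'>0$). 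The complementary bulk $\{\delta_0/2<\Phi(\zeta)<1-\delta_0/2\}$ is a compact $\zeta$-interval by monotonicity of $\Phi$, hence $\Phi'\ge m>0$ there; choosing $\sigma\ge(\mu+M)/m$ with $M:=\max f'$ gives $\sigma\Phi'\ge\mu+M\ge\mu+f'(\theta)$. Integrating, $\xi(t)=\xi(0)-\tfrac{\sigma\epsilon_0}{\mu}(1-e^{-\mu t})$ and $\eta(t)=\eta(0)+\tfrac{\sigma\epsilon_0}{\mu}(1-e^{-\mu t})$ stay in bounded intervals, so by monotonicity of $\Phi$ one may replace the moving phases by their extreme values $x_2:=\xi(0)-\tfrac{\sigma\epsilon_0}{\mu}$ and $x_1:=\eta(0)+\tfrac{\sigma\epsilon_0}{\mu}$ in the eventual bound.

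It remains to secure the initial ordering and conclude. Here the threshold hypotheses \eqref{hipou0} enter through a preliminary trapping step: comparing with the spatially homogeneous bistable flow $\dot p=f(p)$ (whose constant-in-$x$ solutions are exact solutions of \eqref{pbw}) together with steep-front barriers, one shows there is a finite time $t_*$ and translates so that $\Phi(\cdot-\eta(0))-\epsilon_0\le w(\cdot,t_*)\le\Phi(\cdot-\xi(0))+\epsilon_0$. The parabolic comparison principle applied on $[t_*,\infty)$ then yields, for $t\ge t_*$, the bound $\Phi(x-x_1)-\epsilon_0e^{-\mu(t-t_*)}\le w\le\Phi(x-x_2)+\epsilon_0e^{-\mu(t-t_*)}$. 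Finally I absorb $t_*$ and the compact initial segment into the lemma's amplitude: since $0\le w\le1$ and $0\le\Phi\le1$, choosing $q_0:=\max\{1,\epsilon_0\}\,e^{\mu t_*}$ makes $q_0e^{-\mu t}\ge1$ on $[0,t_*]$ (so \eqref{lemFi} holds trivially there) and $q_0e^{-\mu t}\ge\epsilon_0e^{-\mu(t-t_*)}$ for $t\ge t_*$, giving \eqref{lemFi} for all $t\ge0$. I expect the genuine difficulty to lie precisely in the trapping step: the hypotheses control only $\limsup_{x\to-\infty}u_0<\alpha$ and $\liminf_{x\to+\infty}u_0>\alpha$, not actual decay to the rest states $0$ and $1$, so squeezing $w(\cdot,t_*)$ between translates of $\Phi$ up to the small $\epsilon_0$ truly requires the bistable dynamics, whereas the super/subsolution verification above is a routine computation once the tail/bulk splitting is in place.
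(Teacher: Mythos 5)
The paper does not prove this lemma at all: it is quoted verbatim from Fife--McLeod \cite{FiMc77} (their Lemma 4.1), and the surrounding text only summarizes that reference. Your argument is essentially the classical Fife--McLeod proof: the sub/supersolution computation with $\Phi(x-\xi(t))\pm\epsilon_0e^{-\mu t}$, the reduction to $\sigma\Phi'\ge\mu+f'(\theta)$, and the tail/bulk splitting are all correct, and the choice of $x_1,x_2,q_0$ at the end is sound. The one piece you only sketch is the initial trapping at time $t_*$, and you are right that this is where the hypotheses \eqref{hipou0} do the real work; in \cite{FiMc77} it occupies several preliminary lemmas (comparison with solutions of $\dot p=f(p)$ combined with spatially cut-off wave-like barriers, since the threshold condition holds only asymptotically in $x$), so a complete write-up would need to carry that step out rather than assert it.
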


The following lemma provides asymptotic estimates for the
derivatives of $w$.

\begin{lemma}\label{lema43FiMc}
Under the assumptions of Theorem~\ref{th:FiMc}, there exist positive
constants $\sigma, \mu$ and $C$ with $\sigma > |\cs |/2$, such that
\begin{eqnarray}\label{boundsderw}
&& |1-w(x,t)|,\ |w_{x}(x,t)|, \ |w_{xx}(x,t)|,\ |w_t(x,t)| \nonumber \\
&&\hspace{12em}  < \, C(e^{-(\cs /2+\sigma)x} + e^{-\mu t}),\quad x > 0,\ t>0;
\\
&& |w(x,t)|,\ |w_{x}(x,t)|, \ |w_{xx}(x,t)|,\ |w_t(x,t)|  \nonumber
\\&& \hspace{12em}  < \, C(e^{(\sigma - \cs /2)x} + e^{-\mu t}),\quad x < 0,\
t>0.
\end{eqnarray}
\end{lemma}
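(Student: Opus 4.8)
My plan is to reduce the four estimates to two ingredients --- the exponential approach of the profile $\Phi$ to its limits at $\pm\infty$, and interior parabolic regularity for the equation \eqref{pbw} satisfied by $w$ --- so I first pin down the decay of $\Phi$. As $x\to+\infty$ the profile tends to the equilibrium $1$; writing $\psi=1-\Phi$ and linearising \eqref{ode} yields $\psi''+\cs\psi'+f'(1)\psi\approx0$. Since $f'(1)<0$, the characteristic roots $\lambda=\tfrac12(-\cs\pm\sqrt{(\cs)^2-4f'(1)})$ are real of opposite sign, and the bounded branch forces $0\le 1-\Phi(x)\le Ce^{-(\cs/2+\sigma_1)x}$ with $\sigma_1=\tfrac12\sqrt{(\cs)^2-4f'(1)}>|\cs|/2$. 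Linearising around $0$ as $x\to-\infty$ gives likewise $0\le\Phi(x)\le Ce^{(\sigma_2-\cs/2)x}$ for $x<0$, with $\sigma_2=\tfrac12\sqrt{(\cs)^2-4f'(0)}>|\cs|/2$. Setting $\sigma=\min(\sigma_1,\sigma_2)$ produces a single constant $\sigma>|\cs|/2$ for which both bounds hold.

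Next I would obtain the zeroth-order estimates on $w$. The maximum principle gives $0\le w\le1$, so $|1-w|=1-w$ and $|w|=w$. For $x>0$, the left inequality of \eqref{lemFi} gives $1-w(x,t)\le 1-\Phi(x-x_1)+q_0e^{-\mu t}$, and combining with the decay of $\Phi$ at $+\infty$ (the translation $x_1$ and any bounded range of $x$ being absorbed into the constant) yields $1-w\le C(e^{-(\cs/2+\sigma)x}+e^{-\mu t})$. Symmetrically, for $x<0$ the right inequality of \eqref{lemFi} and the decay of $\Phi$ at $-\infty$ give $w\le C(e^{(\sigma-\cs/2)x}+e^{-\mu t})$. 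This disposes of the terms $|1-w|$ and $|w|$.

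It remains to bound the derivatives. On $x>0$ set $z=1-w$, which solves the uniformly parabolic equation $z_t=z_{xx}+\cs z_x-f(1-z)$ with constant drift and inhomogeneity $h:=-f(1-z)$ obeying $|h|\le\|f'\|_{L^\infty[0,1]}|z|$. Fixing $(x_0,t_0)$ with $x_0>1$, $t_0>1$, I would apply interior parabolic estimates (\cite[Theorem 5 of Chapter 3]{Fried}) on the unit parabolic cylinder centred there to bound $z_x,z_{xx},z_t$ at $(x_0,t_0)$ by $\|z\|_{L^\infty}+\|h\|_{C^\alpha}$ over that cylinder. By the previous step both $\|z\|_{L^\infty}$ and $\|h\|_{L^\infty}$ on the cylinder are $\le C(e^{-(\cs/2+\sigma)x_0}+e^{-\mu t_0})$, since the exponential weight changes only by a bounded factor across a unit cylinder; hence so are the derivatives. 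As $w_x=-z_x$, etc., this gives the claimed bounds for $x>0$; the ranges $0<x_0\le1$ or $0<t_0\le1$ are covered by enlarging $C$, and the case $x<0$ is identical with $z=w$ and $h=-f(z)$.

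The main obstacle is this last step: one must transport the weight $e^{-(\cs/2+\sigma)x}+e^{-\mu t}$ through the interior estimates without losing the rate, which is precisely why I localise on translated unit cylinders where the weight is essentially constant; and one must first secure enough regularity of the inhomogeneity $h=-f(1-z)$ for the Schauder estimate to apply, which I would obtain by a preliminary $L^p$ interior estimate followed by Sobolev embedding to place $z$, and hence $h$, in a H\"older class. Steps one and two, by contrast, are routine ODE asymptotics near hyperbolic equilibria together with the comparison bound of Lemma~\ref{lema41FiMc}.
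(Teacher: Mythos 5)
The paper does not prove this lemma: it is quoted verbatim from \cite{FiMc77} (their Lemma 4.3), so there is no internal proof to compare against. Your reconstruction is essentially sound and rests on the same two pillars as the original: the exponential approach of $\Phi$ to its limits (routine stable/unstable-manifold asymptotics for \eqref{ode} near the hyperbolic equilibria $0$ and $1$, giving precisely the rates $\cs/2+\sigma$ and $\sigma-\cs/2$ with $\sigma>|\cs|/2$ because $f'(0),f'(1)<0$), and the sandwich \eqref{lemFi} of Lemma~\ref{lema41FiMc} to transfer that decay to $w$ up to an $e^{-\mu t}$ error. Where you diverge from Fife--McLeod is in the derivative bounds: they work from the variation-of-constants representation of $w$ against the fundamental solution of $\partial_t-\partial_{xx}-\cs\partial_x$ and estimate the kernel directly, whereas you localize on unit parabolic cylinders and invoke interior $L^p$ and Schauder estimates from \cite{Fried}. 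Your route is legitimate, and your ordering of the bootstrap is the correct one: the preliminary interior $W^{2,1}_p$ estimate is \emph{linear} in $\sup|z|+\sup|h|$ over the enlarged cylinder, so it delivers $\|z\|_{C^{1+\alpha}}\le C(e^{-(\cs/2+\sigma)x_0}+e^{-\mu t_0})$ with the full rate; a naive interpolation between $\sup|z|$ and a crude gradient bound would only give the power $1-\alpha$ of that rate and would not suffice.

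One point deserves honesty rather than the phrase ``covered by enlarging $C$.'' For $0<t_0\le 1$ the right-hand side of \eqref{boundsderw} is bounded below by a fixed positive constant, so what you need is a $t$-independent bound on $|w_x|,|w_{xx}|,|w_t|$ down to $t=0$; but for merely piecewise continuous $u_0$ the gradient of the solution of \eqref{pbw} genuinely blows up like $t^{-1/2}$ as $t\to0^+$. Your argument, as written, proves the derivative estimates for $t\ge\tau$ with constants depending on $\tau>0$, which is all that is ever used in Section 3 of the paper and is how the result should be read; but it does not, and cannot, give them uniformly on $(0,1]$ under the stated hypotheses. Flag this as a restriction on the range of $t$ (or as an added regularity hypothesis on $u_0$) rather than absorbing it into $C$.
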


\begin{nota}\label{nota_exprate}
Although the result stated in Theorem~\ref{th:FiMc} is very general in
terms of the initial data in \eqref{originalpb}, its proof yields little hint
about the exponent $\omega$ in the exponential estimate
\eqref{fifemcl_expconv}. In this sense the study accomplished in \cite{Henry}
is clearer. Following a different approach, the convergence result
\eqref{fifemcl_expconv} is also proven in \cite{Henry}, although for a less
general class of initial data. Once an equilibrium $\Phi$ for \eqref{pbw} is
shown to exist, the uniform convergence of $w$ to a shift of $\Phi$ is obtained
by analyzing the linearization about $\Phi$ of the equation in \eqref{pbw}.
More precisely, the spectrum of the operator
\begin{equation}\label{linop}
Lw := w''+\cs w'+f'(\Phi)w, \qquad \infty < x < \infty.
\end{equation}
is considered. By \cite[Theorem A.2 of Chapter 5]{Henry}, the
essential spectrum of $L$ lies in $\real z \le -\beta$
with
\begin{equation}\label{beta_esspec}
\beta=\min\{-f'(0),-f'(1)\} > 0.
\end{equation}
The rest of $\sigma(L)$, i.e., the set of isolated eigenvalues of $L$ of finite
multiplicity, is also shown to be negative but the eigenvalue 0, which turns
out to be simple. This, by \cite[Exercise 6 of Section 5.1]{Henry}, yields the
exponential rate of convergence in \eqref{fifemcl_expconv}. The rate of
convergence can be taken as any $\omega<\omega_0$ where
\begin{equation}\label{exprate}
\omega_0= \min\{ \beta, \gamma\},
\end{equation}
where $-\gamma < 0$ is the spectral abscissa, i. e. the largest real part of any non zero eigenvalue of $L$. In fact,
this analysis of $\sigma(L)$ yields the asymptotic stability with
asymptotic phase of the family of equilibria
$$
\{\Phi(\cdot-x_0) : x_0 \in \bR \}$$
of \eqref{pbw}.

We finally notice that from the proof of Lemma~\ref{lema41FiMc} accomplished in
\cite{FiMc77} it follows that the constant $\mu$ in Lemma \ref{lema41FiMc} and
Lemma \ref{lema43FiMc} can be chosen as close to $\beta$ as we wish.
This implies that we can choose any $\mu$ satisfying
\begin{equation}\label{cotamu}
\mu<\omega_0.
\end{equation}
The above bound will be used in Section~\ref{subsec_changev}.
\end{nota}

\par\bigskip

We next show an existence and uniqueness result  for the original Cauchy problem \eqref{originalpb} in the spaces
\begin{equation}\label{Wdot}
\dot{W}^{1,p}(\bR) = \{ u \in W^{1,p}_{loc}(\bR) : \partial_x u \in L^p(\bR) \},
\end{equation}
for $1\le p\le \infty$.
  This result is slightly more general than what we strictly need to ensure the well-posedness of \eqref{modifpb}.
%


\begin{proposition}\label{prop:exun}
Let $f\in C^1(\bR,\bR)$ satisfying $f(0)=f(1)=0$.  Let $1\le
p\le\infty$ and $u_0 \in L^{\infty}(\bR)\bigcap \dot{W}^{1,p}(\bR)$
with $0\leq u_0\leq 1$ a.e. $x\in \bR$. Then,

\par\medskip\noindent (i) there exists a unique mild solution $u\in
L^{\infty}([0,\infty)\times\bR)\bigcap
C((0,\infty);\dot{W}^{1,p}(\bR))$ of the Cauchy problem
\eqref{originalpb}. Moreover, this solution satisfies $0\leq u\leq
1$, is a classical solution for $t>0$ and has the following
regularity $u\in C(0,\infty, C^{1,\eta}(\bR))$, for all $\eta<1$.

\par\medskip\noindent (ii) In case $p=1$ and when the function $f$
satisfies (\ref{hipof_intro}) and the initial condition $u_0$
satisfies (\ref{hipou0}), then there exists $C>0$ such that
$\|u_x(\cdot,t)\|_1 \le C$, for all $t>0$.

\par\medskip\noindent (iii) In case $p=2$ and when the function $f$
satisfies (\ref{hipof_intro}) and the initial condition $u_0$
satisfies (\ref{hipou0}), then there exists a $\beta>0$ such that
$\|u_x(\cdot, t)\|_{2}\geq \beta$ for all $t>0$.
\end{proposition}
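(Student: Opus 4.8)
For part (i) I would follow standard semilinear parabolic theory in mild (Duhamel) form. Since the comparison principle will trap the solution in $[0,1]$, there is no loss in first replacing $f$ by a globally Lipschitz $C^1$ function $\tilde f$ that coincides with $f$ on $[0,1]$; then the Nemytskii map $u\mapsto\tilde f(u)$ is globally Lipschitz both on $L^\infty(\bR)$ and on $\dot{W}^{1,p}(\bR)$, the latter because $\partial_x\tilde f(u)=\tilde f'(u)u_x$. Writing $u(t)=e^{t\partial_{xx}}u_0+\int_0^t e^{(t-s)\partial_{xx}}\tilde f(u(s))\,ds$ and using that the one-dimensional heat semigroup is a contraction on $L^\infty$ and on $\dot{W}^{1,p}$, a Banach fixed-point argument produces a unique global mild solution in $C([0,\infty);L^\infty(\bR)\cap\dot{W}^{1,p}(\bR))$; globality is immediate from the linear growth of $\tilde f$. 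The bounds $0\le u\le1$ follow from the comparison principle (as $u\equiv0$, $u\equiv1$ solve the equation and $0\le u_0\le1$), which in particular shows $\tilde f(u)=f(u)$, so $u$ solves \eqref{originalpb}. The parabolic smoothing of $e^{t\partial_{xx}}$ (mapping $L^\infty$ into $C^{1,\eta}$ for $t>0$) together with $f\in C^1$ and a bootstrap then gives that $u$ is classical for $t>0$ and lies in $C((0,\infty);C^{1,\eta}(\bR))$ for every $\eta<1$. I regard this part as routine.

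For part (ii) the hypotheses \eqref{hipof_intro} and \eqref{hipou0} let me invoke Theorem~\ref{th:FiMc} and Lemmas~\ref{lema41FiMc}, \ref{lema43FiMc} for $w(x,t)=u(x+\cs t,t)$ from \eqref{defw}--\eqref{pbw}; note that $\|u_x(\cdot,t)\|_1=\|w_x(\cdot,t)\|_1$ since $w$ is a spatial shift of $u$. Setting $v=w_x$, which solves the linear equation $v_t=v_{xx}+\cs v_x+f'(w)v$, I would estimate $\|v\|_1$ by smoothing $\sgn(v)$ and invoking Kato's inequality to obtain $\frac{d}{dt}\int_\bR|v|\,dx\le\int_\bR f'(w)|v|\,dx$; here $\int\sgn(v)v_{xx}\le0$ and $\int\sgn(v)\,\cs v_x=\cs\int\partial_x|v|=0$, the boundary terms vanishing because $v(\cdot,t)\in L^1\cap C^0$ tends to $0$ at $\pm\infty$. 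The whole difficulty of the statement sits here: the naive bound $\int f'(w)|v|\le\|f'\|_\infty\|v\|_1$ only yields exponential growth, so I must exploit the sign of $f'$. Choosing $\delta>0$ with $f'\le-\beta'<0$ on $[0,\delta]\cup[1-\delta,1]$ (possible by $f'(0),f'(1)<0$), Lemma~\ref{lema41FiMc} shows that for $t\ge T_0$ the transition layer $\{x:\ \delta<w(x,t)<1-\delta\}$ lies in an interval whose length is bounded uniformly in $t$, on which $|v|$ is bounded by Lemma~\ref{lema43FiMc}. Splitting $\int f'(w)|v|$ into this layer (a uniformly bounded contribution $K$) and its complement (where $\int f'(w)|v|\le-\beta'(\|v\|_1-C_B)$ for a fixed $C_B$) yields the dissipative inequality $\frac{d}{dt}\|v\|_1\le K'-\beta'\|v\|_1$, so $\|v(t)\|_1$ remains bounded for $t\ge T_0$. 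The interval $(0,T_0]$ is dispatched by the continuity $t\mapsto\|u_x(\cdot,t)\|_1$ from part (i) together with $\|u_0'\|_1<\infty$.

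For part (iii) I again pass to $w$, so $\|u_x(\cdot,t)\|_2=\|w_x(\cdot,t)\|_2$, and I establish the lower bound by a reverse Cauchy--Schwarz estimate on a transition layer of bounded width. Using Lemma~\ref{lema41FiMc}, for $t\ge T_0$ I would exhibit points $a_t<b_t$ with $b_t-a_t\le L$ uniformly in $t$ and $w(b_t,t)-w(a_t,t)\ge\frac12$; then $\frac12\le\int_{a_t}^{b_t}w_x\,dx\le(b_t-a_t)^{1/2}\|w_x(\cdot,t)\|_2\le L^{1/2}\|w_x(\cdot,t)\|_2$, giving $\|w_x(\cdot,t)\|_2\ge\frac12 L^{-1/2}$. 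For $t\in(0,T_0]$ the map $t\mapsto\|u_x(\cdot,t)\|_2$ is continuous by part (i) and extends continuously to $t=0$ with value $\|u_0'\|_2>0$ (the datum is non-constant by \eqref{hipou0}); since it is also strictly positive at every $t>0$ (the solution cannot become spatially constant, as backward uniqueness for the linear equation satisfied by $u_x$ would otherwise force $u_0'\equiv0$), it is bounded below on the compact interval $[0,T_0]$. Taking the smaller of the two lower bounds gives the desired $\beta>0$. The main obstacle throughout is the uniform-in-time $L^1$ bound of part (ii): converting the Kato estimate into a genuinely dissipative differential inequality by localizing the unstable part of $f'$ to the bounded transition layer controlled by Lemma~\ref{lema41FiMc}.
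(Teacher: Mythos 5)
Your proposal is correct and follows essentially the same route as the paper: mild solutions plus comparison and heat-kernel smoothing for (i), and for the key estimate (ii) the same Kato-inequality/dissipativity argument that confines the dangerous part of $f'(w)$ to a uniformly bounded transition layer via Lemmas~\ref{lema41FiMc} and \ref{lema43FiMc} and exploits $f'\le-\beta<0$ elsewhere --- the paper merely fixes the good region spatially as $\{|x|\ge L\}$ for $t\ge t_0$ using \eqref{fifemcl_expconv}, rather than parametrizing it by the level sets of $w$ as you do. In (iii) you deviate mildly --- a quantitative reverse Cauchy--Schwarz bound across the layer instead of the paper's soft $\liminf$ argument, and backward uniqueness for the equation satisfied by $u_x$ where the paper uses the simpler forward observation that a spatially constant $u(\cdot,T)$ remains constant and hence cannot approach a travelling wave --- but both variants are valid.
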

\par\medskip
{\em Proof.}  $(i)$
For initial data $u_0 \in L^\infty(\bR)$  the existence and uniqueness of mild solutions in $L^\infty((0,\infty),L^\infty(\bR))$
holds by  the variation of constants formula below and  standard fixed point arguments,
\begin{equation}\label{soldebil}
u(t) = G(\cdot,t)
* u_0 + \int_0^t G(t-s) * f(u(\cdot,s)) \, ds, \qquad t>0,
\end{equation}
where $G = G(x,t) = (4\pi t)^{-1/2} \exp(-|x|^2/4t)$ is the heat
kernel and $*$ denotes the convolution in the space variable.
Then, by the regularization properties of the heat kernel, this solution belongs to $C((0,\infty),W^{s,\infty}(\bR))$, for all $s<2$, see \cite{Henry}.
The embedding $W^{s,\infty}(\bR)\hookrightarrow C^{1,\eta}(\bR)$ for $\eta <s-1$ implies the regularity result.

On the other hand let us consider the initial value problem
\begin{equation}\label{eq_ux}
\left\{
\begin{array}{l}
q_t=q_{xx}+f'(u(x,t))q, \qquad -\infty < x < \infty, \quad t>0, \\[5pt]
q(x,0)=\partial_x u_0 (x)\in L^p(\bR).
\end{array}
\right.
\end{equation}
formally solved by the space derivative $u_x$ of $u$. Since $q(x,0) \in L^p(\bR)$, we have a unique solution $q\in
C([0,\infty),L^p(\bR))$ of \eqref{eq_ux}. In fact, $q$ is given by
\eqref{soldebil} with $f'(u(\cdot,s))q(\cdot,s)$ instead of
$f(u(\cdot,s))$. The well-known $L^p\to L^q$ estimates for the heat
equation in $\bR^N$, namely
\begin{equation}\label{LpLq}
\|G(\cdot,t)*\varphi\|_q \le Ct^{-\frac N2 \big(\frac 1p-\frac 1q
\big)} \|\varphi\|_p, \qquad 1\le p < q \le \infty,
\end{equation}
imply that also $q\in C(0,\infty,L^{\infty}(\bR))$. But then it is
\begin{equation}\label{soldebil_ux}
q(x,t) = u_x (x,t) = G(\cdot,t)* \partial_x u_0 + \int_0^t G(t-s) *
(f'(u(\cdot,s))u_x(\cdot,s)) \, ds, \qquad t>0,
\end{equation}
by the uniqueness of solutions of \eqref{eq_ux} in
$C(0,\infty,L^{\infty}(\bR))$.

Since  $f(0)=0$ and $f(1)=0$, both functions $u\equiv 0$ and
$u\equiv 1$ are strong solutions of \eqref{originalpb}. Using
standard comparison arguments, we have that if $0\leq u_0\leq 1$
then any possible solution starting at $u_0$ will lie between this
two constants functions.


$(ii)$ From \eqref{fifemcl_expconv} and i) we have that the solution
$u$ to \eqref{originalpb} approaches a travelling wave solution and
$u_x(\cdot,t) \in L^1(\bR)$ for every $t$. We next show the uniform
boundedness in time of $\|u_x(\cdot,t)\|_1$. This is equivalent to
bound $\|w_x(\cdot,t)\|_1$, for $w$ in \eqref{defw}. To this end, we
consider $h = w_x$, which satisfies the equation
\begin{equation}\label{eq_wx}
h_t = h_{xx} + \cs h_x + f'(w)h, \qquad -\infty < x < \infty.
\end{equation}
Applying that $f'$ is continuous, that both $w, \Phi \in [0,1]$,  and
\eqref{fifemcl_expconv}, we can estimate
$$
|f'(w(x,t)) - f'(\Phi(x-x_0))| \le C |w(x,t)- \Phi(x-x_0)| \le K
e^{-\omega t}, \qquad x\in \bR,\quad  t>0.
$$
This, together with the hypotheses $f'(0),f'(1)<0$, imply the
existence of $L>0$ and $t_0>0$ large enough, and $\beta>0$, so that
$$
f'(w(x,t)) \le f'(\Phi(x-x_0)) + K e^{-\omega t} \le -\beta < 0, \quad
\mbox{ for } |x|\ge L ,\quad  t\ge t_0.
$$
Multiplying formally in \eqref{eq_wx} by the sign of $h$, $\sgn(h)$,
and integrating in $\{x\in \bR: |x| \ge L\}$ gives, for every $t\ge
t_0$,
$$
\frac{d}{dt} \int_{|x|\ge L} |h(x,t)|\, dx = \int_{|x|\ge L}
(h_{xx}\sgn(h) + \cs |h|_x) \,dx + \int_{|x|\ge L} f'(w(x,t))|h(x,t)|\,
dx.
$$

This yields, by Kato's inequality (see \cite{Kato72}) and applying
estimates in Lemma~\ref{lema43FiMc} to $h=w_x$ and $h_x=w_{xx}$,
\begin{eqnarray*}
&& \frac{d}{dt} \int_{|x|\ge L} |h(x,t)|\, dx \\
& & \le \,  \int_{|x|\ge L} |h|_{xx}\,dx + |\cs| (\limsup_{M\to \pm
\infty} |h(M,t)| + |h(\pm L,t)|) - \beta  \int_{|x|\ge L} |h(x,t)|\, dx \\
& & \le \, \limsup_{M\to \pm \infty} (|h_x(M,t)| + |h_x(\pm L,t)|) + \tilde{C} - \beta  \int_{|x|\ge L} |h(x,t)|\, dx \\
& & \le \,  -\beta \int_{|x|\ge L} |h(x,t)|\, dx + C, \qquad t\ge
t_0,
\end{eqnarray*}
where the constant $C$ is independent of $t$ and $L$. Thus, setting
$$
g(t) =\int_{|x|\ge L} |h(x,t)|\, dx,
$$
multiplying the above inequality by $e^{\beta t}$ and integrating
from $t_0$ to $t$, we obtain
$$
g(t) \le e^{-\beta (t-t_0)} g(t_0) + \frac{C}{\beta} (1-e^{-\beta
(t-t_0)}) \le A, \qquad t\ge t_0.
$$
Finally, we apply again Lemma~\ref{lema43FiMc} to estimate
$$
\int_{\bR} |h(x,t)|\,dx \le A + \int_{|x| < L} |h(x,t)|\,dx \le A +
2L \sup_{x\in[-L,L]} |h(x,t)| \le C, \quad \mbox{for all} \ t\ge
t_0.
$$
For $t\in (0,t_0]$ we can bound directly $\|u_x(\cdot,t)\|_1$ in
\eqref{soldebil_ux} and apply Gronwall's i\-ne\-qua\-li\-ty.

The above argument can be formalized by multiplying in \eqref{eq_wx} by
$h|h|^{p-2}$ with $p>1$. In this way we can get an estimate for $\|h\|_p$ which
turns out to be independent of $p$ and then take the limit as $p\to 1$. Another
possibility is to consider a Lipschitz regularization of $\sgn(h)$. 

$(iii)$ From \eqref{fifemcl_expconv} we have that the solution
approaches a travelling wave solution and therefore,
$\liminf_{t\to+\infty}\|u_x(t,\cdot)\|_2>0$, which implies that
there exists a $T_1$ and $\beta_1$ with $\|u_x(\cdot, t)\|_2\geq
\beta_1$ for all $t\geq T_1$.

On the other hand, if there exists some time $0<T<T_1$ such that
$\|u_x(\cdot, T)\|_{2}=0$ then $u(\cdot, T)$ is a constant function
and therefore $u(\cdot, t)$ is a constant function for all $t\geq
T$. To see this we just use the uniqueness of solutions and the fact
that if the initial condition is a constant function, then the
solution is a constant function in space for all forward times.
Hence, $\|u_x(\cdot, t)\|_2>0$ for all $t\in [0,T_1]$ and since this
is a compact interval and the function $t\to \|u_x(\cdot,t)\|_2$ is
continuous, then there exists a $\beta_2>0$ such that $\|u_x(\cdot,
t)\|_2\geq \beta_2$ for all $t\in [0,T_1]$.    This shows the last
part of the proposition. \endproof

\begin{nota}\label{remark_prop}
{\rm Assuming further that $f'$ is Lipschitz continuous, it is
possible to prove i) of Proposition~\ref{prop:exun} by using
standard fixed point arguments in the space $L^\infty(\bR)\cap \dot
W^{1,p}(\bR)$.}
\end{nota}

\begin{nota}\label{remark1}
{\rm Under the assumptions of Theorem~\ref{th:FiMc} and $\partial_x
u_0 \in L^1(\bR)\bigcap L^2(\bR)$, the solution $u$ of
\eqref{originalpb} approaches a solution $\Phi$ of \eqref{ode}, as
$t\to \infty$, and, by Proposition~\ref{prop:exun}, $\Phi$ must fulfil
besides the added integrability condition $\Phi'\in L^1(\bR) \bigcap
L^2(\bR)$. We show below that this is consistent with the following
properties of such a travelling front $\Phi$:
\begin{itemize}
\item[(i)] $\Phi'\in L^1(\bR)$. This is a direct consequence of the
fact that $\Phi' > 0 $, so that $|\Phi'|=\Phi'$, and $\Phi(\pm\infty) < \infty$.

\item[(ii)] $\Phi'\in L^2(\bR)$. To see this, we multiply in
\eqref{ode} by $\Phi'$ and integrate along $\bR$, obtaining
\begin{eqnarray*}
&& \cs \int_{\bR}(\Phi'(\xi))^2 \, d\xi = - \int_{\bR} \Phi''(\xi) \Phi'(\xi)
\, d\xi - \int_{\bR} f(\Phi(\xi))\Phi'(\xi) \,d\xi \\
&& \hspace{1em}=\,  -\frac 12 \int_{\bR} \frac d{d\xi} (\Phi')^2(\xi)
\, d\xi - \int_{\bR} \frac d{d\xi} F(\Phi(\xi)) \,d\xi \\
&& \hspace{1em}=\, F(\Phi(\infty))-F(\Phi(-\infty))= F(1) <\infty,
\end{eqnarray*}
where $F$ is defined in \eqref{potencial} and we used that,
necessarily, $\Phi'(\pm \infty)=0$ (see \cite{FiMc77}).


\item[(iii)] We also notice that $\Phi''\in L^2(\bR)$, too.
This follows again from \eqref{ode}, now after multiplication by
$\Phi''$, which leads to
\begin{eqnarray*}
&& \int_{-\infty}^{+\infty} (\Phi'')^2(\xi)\, d\xi = -
\int_{-\infty}^{+\infty} \Big( \frac \cs 2 \frac d{d\xi} (\Phi')^2
+ f(\Phi)\Phi'' \Big) \, d\xi \\ && \hspace{1em} =\,
\int_{-\infty}^{+\infty} f'(\Phi)(\Phi')^2 \, d\xi < \infty,
\end{eqnarray*}
since, by hypothesis, $f\in C^1$, $\Phi$ is bounded and we have shown
that $\Phi'\in L^2(\bR)$.

\end{itemize}

}
\end{nota}

\section{The nonlocal problem in the entire real line}\label{subsec_changev} We now turn
to the non local Cauchy problems \eqref{modifpb} and
\eqref{modifpb2}.

The following result shows how the well-posedness of equation
\eqref{modifpb} depends on the properties of the solution $u$ to the
original problem \eqref{originalpb}.

\begin{proposition}\label{propo1}
Assume that the initial data $u_0$ in \eqref{originalpb} is
piecewise continuous and $0\le u_0 \le 1$. Let $u$ be the unique
classical solution to \eqref{originalpb}. Assume further that
\begin{itemize}
\item[(i)] $u_x(\cdot,t) \in L^1(\bR) \bigcap L^2(\bR)$, for every $t\ge 0$.

\item[(ii)] there exists $\beta > 0$ such that $\|u_x(\cdot,t)\|_2 \ge \beta$
for all $t\ge 0$.
\end{itemize}

Then
\begin{equation}\label{v}
v(x,t):=u(x+\gamma_u(t),t), \qquad x\in \bR, \quad t>0
\end{equation}
with
\begin{equation}\label{gamma}
\gamma_u(t) := -\int_0^t \frac{\langle f(u(\cdot,s)),u_x(\cdot,s)
\rangle}{\langle u_x(\cdot,s),u_x(\cdot,s) \rangle} \, ds, \qquad
t>0
\end{equation}
is well defined and is a classical solution of \eqref{modifpb}.
\end{proposition}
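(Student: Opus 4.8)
The plan is to treat $v$ as an explicit translate of the already-known classical solution $u$ and to verify directly, via the chain rule, that it satisfies \eqref{modifpb}. First I would establish that $\gamma_u$ in \eqref{gamma} is well defined and $C^1$ on $(0,\infty)$. By hypothesis (ii) the denominator satisfies $\langle u_x(\cdot,s),u_x(\cdot,s)\rangle = \|u_x(\cdot,s)\|_2^2 \ge \beta^2 > 0$, so it never vanishes. For the numerator, since $0\le u\le 1$ and $f\in C^1$, the constant $M:=\sup_{[0,1]}|f|$ is finite and $|\langle f(u(\cdot,s)),u_x(\cdot,s)\rangle|\le M\|u_x(\cdot,s)\|_1$, which is finite by hypothesis (i). Hence the integrand in \eqref{gamma} is bounded on $[0,t]$, so $\gamma_u(t)$ is well defined with $\gamma_u(0)=0$. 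To get $\gamma_u\in C^1(0,\infty)$ I would invoke the time-continuity of $s\mapsto u_x(\cdot,s)$ in $L^1(\bR)\cap L^2(\bR)$ (guaranteed by Proposition~\ref{prop:exun}, since $u\in C((0,\infty);\dot{W}^{1,p}(\bR))$) together with the continuity of $f$, which make both inner products continuous in $s$; the fundamental theorem of calculus then gives $\gamma_u'(t) = -\langle f(u(\cdot,t)),u_x(\cdot,t)\rangle/\langle u_x(\cdot,t),u_x(\cdot,t)\rangle$ for $t>0$.

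Next I would carry out the differentiation. Writing $\gamma=\gamma_u(t)$ and $v(x,t)=u(x+\gamma(t),t)$, the chain rule gives $v_x(x,t)=u_x(x+\gamma,t)$, $v_{xx}(x,t)=u_{xx}(x+\gamma,t)$ and $v_t(x,t)=\gamma'(t)\,u_x(x+\gamma,t)+u_t(x+\gamma,t)$. Evaluating $u_t=u_{xx}+f(u)$ at the point $(x+\gamma,t)$ yields $u_t(x+\gamma,t)=v_{xx}(x,t)+f(v(x,t))$, so that
\begin{equation*}
v_t = \gamma'(t)\,v_x + v_{xx} + f(v).
\end{equation*}
Since $u$ is a classical solution for $t>0$ and $\gamma\in C^1(0,\infty)$, all derivatives appearing here are continuous, so $v$ is a classical solution of the equation determined by $\gamma'(t)$; it remains only to identify $\gamma'(t)$ with the nonlocal coefficient in \eqref{modifpb}.

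The key observation is that the inner products are invariant under the spatial translation defining $v$. For each fixed $t$ the substitution $y=x+\gamma(t)$ gives
\begin{equation*}
\langle f(v(\cdot,t)),v_x(\cdot,t)\rangle = \int_\bR f(u(y,t))\,u_x(y,t)\,dy = \langle f(u(\cdot,t)),u_x(\cdot,t)\rangle,
\end{equation*}
and likewise $\langle v_x(\cdot,t),v_x(\cdot,t)\rangle=\langle u_x(\cdot,t),u_x(\cdot,t)\rangle$. Therefore $\gamma'(t)=-\langle f(v),v_x\rangle/\langle v_x,v_x\rangle$, and substituting this into the displayed equation above shows that $v$ satisfies \eqref{modifpb}. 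Finally $v(x,0)=u(x,0)=u_0(x)$ because $\gamma_u(0)=0$.

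I expect the main obstacle to be not the algebraic verification --- which reduces to the chain rule together with the translation invariance of the integrals --- but the regularity bookkeeping needed to make every step rigorous, namely showing that $\gamma_u$ is genuinely $C^1$ on $(0,\infty)$ so that $\gamma_u'$ can be evaluated pointwise. This hinges on the time-continuity of $u_x(\cdot,t)$ in $L^1(\bR)\cap L^2(\bR)$ and on the uniform lower bound (ii), which keeps the denominator away from zero. The hypotheses (i)--(ii) are precisely what is needed here, and both are supplied, for the relevant class of initial data, by Proposition~\ref{prop:exun}(ii)--(iii).
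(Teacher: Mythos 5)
Your argument is correct and follows essentially the same route as the paper's proof: well-definedness of $\gamma_u$ from the boundedness of the numerator (via $f(u)\in L^\infty$, $u_x\in L^1$) and the strict positivity of the denominator (hypothesis (ii)), followed by the translation invariance of the inner products to identify $\gamma_u'$ with the nonlocal coefficient $\lambda_v$. The only difference is that you spell out the chain-rule verification and the $C^1$ regularity of $\gamma_u$ explicitly, which the paper compresses into ``clearly $v$ fulfils \eqref{modifpb}.''
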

\begin{proof}
Due to assumption $(ii)$, $u(\cdot,t)$ is not constant in space for
all $t>0$ and
\begin{equation}\label{lambda}
\lambda_u(t) := -\frac{\langle f(u(\cdot,t)),u_x(\cdot,t)
\rangle}{\langle u_x(\cdot,t),u_x(\cdot,t) \rangle}.
\end{equation}
defines a bounded and continuous mapping of $t$. The integral in the
scalar product of the numerator in \eqref{lambda} is convergent, since $u\in L^{\infty}(\bR)$, then $f(u)\in L^{\infty}(\bR)$, and,
by hypothesis $(i)$, $u_x \in L^1(\bR)$. By hypotheses $(i)$ and $(ii)$, the
scalar product in the denominator in \eqref{lambda} is also finite and strictly
positive.
The continuity follows from the fact that $u$ is a classical
solution. Thus, $\gamma_u$ in \eqref{gamma} is well defined and so
is $v$ in \eqref{v}.

From the invariance with respect to translations of the integral in the whole real line, we easily get that for every $t\ge0$ fixed,
$\langle f(v(\cdot,t)),v_x(\cdot,t) \rangle=\langle f(u(\cdot,t)),u_x(\cdot,t) \rangle$.
%
In a similar way, $\|v_x(\cdot,t)\|_2^2 = \|u_x(\cdot,t)\|_2^2 $. Thus,
$\lambda_u(t) = \lambda_v(t)$, $\gamma_u(t) = \gamma_v(t)$,
\begin{equation}\label{vv}
v(x,t) = u(x+\gamma_v(t),t),
\end{equation}
and clearly $v$ fulfils \eqref{modifpb}. \hfill \end{proof}

The analysis of \eqref{modifpb2} follows the same steps as the
analysis of \eqref{modifpb} and is, in fact, simpler. Thus, we only
state here the corresponding result for \eqref{modifpb2}.
\begin{proposition}\label{propo1-2}
Assume that the initial data $u_0$ in \eqref{originalpb} is
piecewise continuous and $0\le u_0 \le 1$. Let $u$ be the unique
classical solution to \eqref{originalpb}. Assume further that
\begin{itemize}
\item[(i)] $u_x(\cdot,t) \in L^2(\bR)$, for every $t\ge 0$.

\item[(ii)] There exists $\beta > 0$ such that $\|u_x(\cdot,t)\|_2 \ge \beta$
for all $t\ge 0$.
\end{itemize}

Then,
\begin{equation}\label{v2}
v(x,t):=u(x+\gamma_u(t),t), \qquad x\in \bR, \quad t>0
\end{equation}
with
\begin{equation}\label{gamma2}
\gamma_u(t) := -\int_0^t \frac{F(1)}{\langle
u_x(\cdot,s),u_x(\cdot,s) \rangle} \, ds, \qquad t>0,
\end{equation}
and $F$ in \eqref{potencial} is well defined and is a classical
solution of \eqref{modifpb2}.
\end{proposition}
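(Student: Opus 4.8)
The plan is to mirror the proof of Proposition~\ref{propo1}, exploiting that equation \eqref{modifpb2} is structurally simpler because the nonlocal coefficient involves only the constant $F(1)$ in the numerator rather than the $t$-dependent scalar product $\langle f(v),v_x\rangle$. First I would verify that the coefficient
$$
\lambda_u(t):=-\frac{F(1)}{\langle u_x(\cdot,t),u_x(\cdot,t)\rangle}
$$
is a well-defined, bounded, continuous function of $t$. Boundedness and strict finiteness follow immediately from hypothesis (ii), which gives $\langle u_x(\cdot,t),u_x(\cdot,t)\rangle=\|u_x(\cdot,t)\|_2^2\ge\beta^2>0$ for all $t\ge0$; the numerator $F(1)$ is simply a fixed constant given by \eqref{potencial}, so no integrability argument is needed there (this is where the proof is genuinely simpler than that of Proposition~\ref{propo1}, where one had to invoke $u_x\in L^1(\bR)$ to make sense of $\langle f(u),u_x\rangle$). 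Continuity of $t\mapsto\|u_x(\cdot,t)\|_2^2$ follows from $u$ being a classical solution together with hypothesis (i), exactly as in the previous proof. Consequently $\gamma_u(t)$ in \eqref{gamma2} is a well-defined $C^1$ function of $t$ with $\gamma_u'(t)=\lambda_u(t)$, and therefore $v(x,t)=u(x+\gamma_u(t),t)$ in \eqref{v2} is well defined.

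Next I would check that $v$ solves \eqref{modifpb2}. Differentiating \eqref{v2} in $t$ and $x$ using the chain rule gives
$$
v_t(x,t)=u_t(x+\gamma_u(t),t)+\gamma_u'(t)\,u_x(x+\gamma_u(t),t),\qquad v_{xx}(x,t)=u_{xx}(x+\gamma_u(t),t),
$$
and since $u$ solves \eqref{originalpb} we substitute $u_t=u_{xx}+f(u)$ evaluated at the shifted argument. Writing $\lambda_u(t)=\gamma_u'(t)$ this yields
$$
v_t=v_{xx}+\lambda_u(t)\,v_x+f(v).
$$
It then remains to identify $\lambda_u(t)$ with the nonlocal coefficient $-F(1)/\langle v_x,v_x\rangle$ appearing in \eqref{modifpb2}. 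Here I would use the translation invariance of the $L^2$ inner product on the whole real line: because $v(\cdot,t)$ is merely a spatial shift of $u(\cdot,t)$, we have $\|v_x(\cdot,t)\|_2^2=\|u_x(\cdot,t)\|_2^2$, exactly as in the proof of Proposition~\ref{propo1}. Hence
$$
-\frac{F(1)}{\langle v_x(\cdot,t),v_x(\cdot,t)\rangle}=-\frac{F(1)}{\langle u_x(\cdot,t),u_x(\cdot,t)\rangle}=\lambda_u(t)=\lambda_v(t),
$$
so the coefficient computed from $v$ coincides with the one driving the equation, and $v$ is a classical solution of \eqref{modifpb2}.

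The main point to watch is that the denominator $\langle v_x,v_x\rangle$ in \eqref{modifpb2} must be interpreted self-consistently in terms of $v$ itself (the equation is genuinely nonlocal and autonomous in $v$), not in terms of the auxiliary $u$; the translation-invariance identity $\|v_x\|_2^2=\|u_x\|_2^2$ is precisely what closes this loop and guarantees consistency, and I expect that to be the only step requiring a word of justification rather than a routine calculation. Classical regularity of $v$ is inherited directly from that of $u$ together with the $C^1$ regularity of $\gamma_u$. Since the argument is otherwise a verbatim simplification of the proof of Proposition~\ref{propo1}, I would state it briefly and refer back to that proof for the shared details, consistent with the remark preceding the statement that the analysis of \eqref{modifpb2} ``follows the same steps'' and ``is, in fact, simpler.''
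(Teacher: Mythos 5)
Your proposal is correct and follows exactly the route the paper intends: the paper does not write out a proof of Proposition~\ref{propo1-2} at all, stating only that it ``follows the same steps'' as Proposition~\ref{propo1} and is simpler, and your argument (well-definedness of the coefficient from hypothesis (ii), the chain-rule computation, and translation invariance of the $L^2$ norm to identify $\lambda_u$ with $\lambda_v$) is precisely that simplified version.
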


Propositions~\ref{propo1} and \ref{propo1-2} imply that the study of
the well-posedness of \eqref{modifpb} and \eqref{modifpb2},
respectively, can be reduced to a further study of the original
Cauchy problem \eqref{originalpb}. In fact, all we need is to ensure
that the solution $u$ of \eqref{originalpb} fulfils assumptions
$(i)$ and $(ii)$ of Proposition~\ref{propo1} or
Proposition~\ref{propo1-2}. As summarized below, this is
provided by Proposition~\ref{prop:exun}.

\par\bigskip

We are now in the position to prove the main result of this section and one of the main results of this paper.

\begin{theorem}\label{th:v}
Under the hypotheses of Theorem~\ref{th:FiMc} and assuming further
that $u_0 \in \dot{W}^{1,1}(\bR)\bigcap \dot{W}^{1,2}(\bR)$, the
augmented problem \eqref{modifpb} is well-posed and its solution $v$
is given by \eqref{v}-\eqref{gamma}.

Let any $\bar\omega<\omega_0$, $\omega_0$ being defined
as in \eqref{exprate}.
Then, there also exist $x^*\in \bR$ and
positive constants $C_1$, $C_2$, such that

\par\medskip\noindent (i) for $\cs $ the propagation speed in
Theorem~\ref{th:FiMc} and $\lambda_v$ in \eqref{lambda} it holds
\begin{equation}\label{conver_speed}
|\lambda_v(t)-\cs | \le C_1e^{-\bar\omega t}, \qquad  t>0.
\end{equation}

\noindent (ii) For $\Phi$ the unique (except for translations) solution
to \eqref{ode}, we can estimate
\begin{equation}\label{conver_v}
|v(x,t)-\Phi(x-x^*)| < C_2 e^{-\bar\omega t}, \qquad x\in \bR, \quad t>0.
\end{equation}

\end{theorem}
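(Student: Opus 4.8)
The plan is to reduce both assertions to a single analytic fact: that the spatial derivative of the re-centred solution $w(x,t)=u(x+\cs t,t)$ from \eqref{defw} converges, exponentially at rate $\bar\omega$, in $L^2(\bR)$ to the derivative of the limiting profile. First I would dispose of well-posedness. Since $u_0\in\dot W^{1,1}(\bR)\cap\dot W^{1,2}(\bR)$, Proposition~\ref{prop:exun} (parts (i)--(iii), with $p=1$ and $p=2$) gives $u_x(\cdot,t)\in L^1(\bR)\cap L^2(\bR)$, $\|u_x(\cdot,t)\|_1\le C$, and $\|u_x(\cdot,t)\|_2\ge\beta>0$ for all $t>0$; these are exactly hypotheses (i)--(ii) of Proposition~\ref{propo1}, so $v$ is the classical solution of \eqref{modifpb} given by \eqref{v}--\eqref{gamma}. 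Using the translation invariance of the inner products exploited in Proposition~\ref{propo1} and the definition \eqref{defw}, I would rewrite
\begin{equation*}
\lambda_v(t)=-\frac{\langle f(w),w_x\rangle}{\langle w_x,w_x\rangle},\qquad
\cs=-\frac{\langle f(\Phi),\Phi'\rangle}{\langle \Phi',\Phi'\rangle},
\end{equation*}
the second being \eqref{cquotient}.

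Writing $A(t)=\langle f(w),w_x\rangle$, $B(t)=\|w_x\|_2^2$, $A_\infty=\langle f(\Phi),\Phi'\rangle$, $B_\infty=\|\Phi'\|_2^2$ and expanding the difference of quotients,
\begin{equation*}
\lambda_v(t)-\cs=\frac{A_\infty\,(B(t)-B_\infty)-B_\infty\,(A(t)-A_\infty)}{B(t)\,B_\infty},
\end{equation*}
the denominator is bounded below by $\beta^2B_\infty>0$ via Proposition~\ref{prop:exun}(iii) and $\Phi'\not\equiv0$. Setting $h(t)=w_x(\cdot,t)-\Phi'(\cdot-x_0)$, one has $B(t)-B_\infty=\langle h,w_x+\Phi'(\cdot-x_0)\rangle=O(\|h\|_2)$ (since $\|w_x\|_2^2\le\|w_x\|_\infty\|w_x\|_1$ is bounded), while $A(t)-A_\infty=\langle f(w)-f(\Phi(\cdot-x_0)),w_x\rangle+\langle f(\Phi(\cdot-x_0)),h\rangle$ is controlled by $\|f(w)-f(\Phi(\cdot-x_0))\|_\infty\|w_x\|_1+\|f(\Phi)\|_2\|h\|_2$, with $f(\Phi)\in L^2(\bR)$ by Remark~\ref{remark1}. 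Using $\|w_x\|_1\le C$ and the uniform bound $\|w-\Phi(\cdot-x_0)\|_\infty\le Ke^{-\bar\omega t}$ of Theorem~\ref{th:FiMc}, everything collapses to $|\lambda_v(t)-\cs|\le C\big(e^{-\bar\omega t}+\|h(t)\|_2\big)$, so \eqref{conver_speed} follows once I prove
\begin{equation}\label{plan:crux}
\|w_x(\cdot,t)-\Phi'(\cdot-x_0)\|_{L^2(\bR)}\le Ce^{-\bar\omega t},\qquad t>0.
\end{equation}

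Estimate \eqref{plan:crux} is the heart of the matter and the step I expect to be the main obstacle, since Theorem~\ref{th:FiMc} supplies only $L^\infty$-convergence of $w$, not of $w_x$. The natural route is the asymptotic stability with asymptotic phase recalled in Remark~\ref{nota_exprate}: the linearisation \eqref{linop} generates an analytic semigroup whose spectrum splits into an essential part in $\{\real z\le-\beta\}$ and isolated eigenvalues, the largest being the simple eigenvalue $0$ with eigenfunction $\Phi'$, the remainder lying left of $-\gamma$. Once the neutral translation mode is removed by the choice of asymptotic phase $x_0$, the transverse part of $w-\Phi(\cdot-x_0)$ decays at any rate $\bar\omega<\omega_0$ in the graph norm of a fractional power of the operator, which dominates $\|\cdot\|_{H^1}$ and hence yields \eqref{plan:crux}. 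The delicate point, and the reason the spectral framework is needed rather than the pointwise bounds of Lemma~\ref{lema43FiMc} alone, is that the essential spectrum reaches up to $-\beta$, forcing one to work in exponentially weighted spaces adapted to the states $0$ and $1$, where $f'(0),f'(1)<0$ furnish the gap. A parabolic-smoothing bootstrap --- applying the variation-of-constants formula for $\partial_{xx}+\cs\partial_x$ over a unit time window together with $\|\partial_xS(\tau)\phi\|_\infty\le C\tau^{-1/2}\|\phi\|_\infty$ --- gives the companion bound $\|w_x-\Phi'(\cdot-x_0)\|_\infty\le Ce^{-\bar\omega t}$, which, with Lemma~\ref{lema43FiMc}, controls the inner region of the $L^2$ integral.

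Finally, \eqref{conver_v} follows from \eqref{conver_speed} by tracking the phase. Writing $\delta(t)=\gamma_u(t)-\cs t$, so that $v(x,t)=w(x+\delta(t),t)$ and $\delta'(t)=\lambda_v(t)-\cs$, the bound \eqref{conver_speed} makes $\delta'$ exponentially integrable; hence $\delta(t)\to\delta_\infty:=\int_0^\infty(\lambda_v-\cs)\,ds$ with $|\delta(t)-\delta_\infty|\le(C_1/\bar\omega)e^{-\bar\omega t}$. Setting $x^*=x_0-\delta_\infty$ and using
\begin{equation*}
|v(x,t)-\Phi(x-x^*)|\le|w(x+\delta(t),t)-\Phi(x+\delta(t)-x_0)|+|\Phi(x+\delta(t)-x_0)-\Phi(x+\delta_\infty-x_0)|,
\end{equation*}
the first term is $\le Ke^{-\bar\omega t}$ uniformly in $x$ by Theorem~\ref{th:FiMc}, and the second is $\le\|\Phi'\|_\infty|\delta(t)-\delta_\infty|\le Ce^{-\bar\omega t}$ since $\Phi'$ is bounded, giving \eqref{conver_v} with a suitable $C_2$.
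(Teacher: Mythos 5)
Your treatment of well-posedness, the algebraic reduction of $\lambda_v(t)-\cs$ to a difference of quotients, and the phase-tracking argument for part (ii) all match the paper's proof in substance. The problem is the step you yourself flag as the crux: the estimate $\|w_x(\cdot,t)-\Phi'(\cdot-x_0)\|_{L^2(\bR)}\le Ce^{-\bar\omega t}$ is never actually proved, and the route you sketch for it does not go through under the stated hypotheses. The spectral/semigroup machinery of Remark~\ref{nota_exprate} (Henry's approach) requires the initial perturbation $w(\cdot,0)-\Phi(\cdot-x_0)$ to lie in the function space on which the linearized semigroup acts — in particular in the exponentially weighted spaces you invoke to push the essential spectrum below $-\beta$. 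Theorem~\ref{th:FiMc} and the added assumption $u_0\in\dot W^{1,1}\cap\dot W^{1,2}$ only give a piecewise continuous $u_0$ satisfying \eqref{hipou0}; nothing places $u_0-\Phi(\cdot-x_0)$ in such a weighted space, and the paper explicitly notes that Henry's framework covers ``a less general class of initial data.'' So your argument for \eqref{plan:crux} is a gap, not a proof, and the parabolic-smoothing bootstrap you append controls $\|w_x-\Phi'\|_\infty$ at best, which still does not yield the $L^2$ bound on all of $\bR$ without additional decay in $x$.

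The paper avoids needing any convergence of $w_x$ at all. After writing
$\langle f(v),v_x\rangle+\cs\|v_x\|_2^2
=\langle f(w)-f(\Phi),w_x\rangle+\langle f(\Phi),w_x-\Phi'\rangle+\cs\langle w_x-\Phi',w_x+\Phi'\rangle$,
it integrates by parts in the last two terms to move the derivative \emph{off} the difference: $\langle f(\Phi),w_x-\Phi'\rangle$ becomes boundary terms (which vanish by Lemma~\ref{lema43FiMc} and the exponential decay of $\Phi$) plus $\langle f'(\Phi)\Phi',w-\Phi\rangle$, and $\langle w_x-\Phi',w_x+\Phi'\rangle$ becomes boundary terms plus $\langle w-\Phi,w_{xx}+\Phi''\rangle$. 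Each resulting term is controlled by $\|w-\Phi\|_\infty$ — which Theorem~\ref{th:FiMc} bounds by $Ke^{-\omega t}$ — multiplied by an $L^1$ norm of $w_x$, $\Phi'$, $f'(\Phi)\Phi'$, or $w_{xx}+\Phi''$; the only extra ingredient is the uniform bound $\|u_{xx}(\cdot,t)\|_{L^1}\le C$ for $t>1$, obtained by a smoothing estimate for the equation satisfied by $q=u_x$. If you replace your \eqref{plan:crux} with this integration-by-parts device, the rest of your argument goes through essentially unchanged.
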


The above result validates the change of variables
\eqref{v}-\eqref{gamma}, shows that $\lambda_v$ in \eqref{lambda}
converges to the asymptotic speed $c$ at an exponential rate and
provides the analogue to Theorem~\ref{th:FiMc} for $v$, since
\eqref{conver_v} is equivalent to
\begin{equation}\label{conver_u}
|u(x,t)-\Phi(x-\gamma(t)-x^*)| < C_2 e^{-\omega t}\qquad x\in \bR,
\quad t>0.
\end{equation}
Moreover, the rate of the exponential convergence is the same as the
one derived in \cite{Henry}.

\begin{proof}
By applying Proposition~\ref{prop:exun} with $p=1$ and $p=2$, we
obtain that $u_x(\cdot,t) \in L^{1}(\bR) \bigcap L^{2}(\bR)$ for $u$
the solution to \eqref{originalpb}. Then, by $(iii)$ of
Proposition~\ref{prop:exun}, the assumptions of
Proposition~\ref{propo1} are fulfilled and the well-posedness of
\eqref{modifpb} follows in a straightforward way.

In order to prove the convergence results $(i)$ and $(ii)$, we also need
some integrability properties of $u_{xx}$. More precisely, we will
use that
\begin{equation}\label{uxx}
u_{xx}(\cdot, t)\in L^1(\bR) \quad \mbox{ and }\quad \|u_{xx}(\cdot,
t)\|_{L^1(\bR)}\leq C, \quad \mbox{for all } t>1.
\end{equation}
The estimates in \eqref{uxx} can be proved by considering the new
variable $q=u_x$, which satisfies the initial value problem
\eqref{eq_ux}. By the regularization properties of the equation in
\eqref{eq_ux}, see for instance \cite{Henry}, it is possible to show
a bound of the type $\|q_x(\cdot, t+1)\|_{L^1(\bR)}\leq
C\|q(\cdot,t)\|_{L^1(\bR)}$. This implies \eqref{uxx}, since by $(ii)$
of Proposition~\ref{prop:exun}, $\|q(\cdot,t)\|_1$ is uniformly
bounded in $t$.

$(i)$ By using that the inner product in $L^2(\bR)$ is invariant under
translations in the space variable and $(iii)$ of
Proposition~\ref{prop:exun}, we can estimate
$$
|\lambda_v(t)-\cs | = \frac{1}{\|u_x\|_2^2} \left| \langle f(v), v_x
\rangle + \cs \|v_x\|_2^2 \right| \le C \left| \langle f(v), v_x
\rangle + \cs \|v_x\|_2^2 \right|,
$$
for some $C>0$. Then, for $w$ defined in \eqref{defw}, $\Phi=\Phi(x-x_0)$,
with $x_0$ in \eqref{fifemcl_expconv}, and formula
\eqref{cquotient}, it follows
\begin{eqnarray*}
\langle f(v), v_x \rangle + \cs \|v_x\|_2^2 &=& \langle f(w), w_x
\rangle + \cs \|w_x\|_2^2 \\
&=& \langle f(w), w_x \rangle + \cs \|w_x\|_2^2 - \langle f(\Phi),\Phi'
\rangle - \cs \|\Phi'\|_2^2 \\
&=& \langle f(w)-f(\Phi), w_x \rangle +   \langle f(\Phi), w_x - \Phi'
\rangle + \cs \langle w_x - \Phi', w_x + \Phi' \rangle.
\end{eqnarray*}
On one hand, using that $f'$ is continuous, $w,\Phi \in(0,1)$,
and by $(ii)$ of Proposition~\ref{prop:exun} it is
$\|w_x(\cdot,t)\|_1 = \|u_x(\cdot,t)\|_1 \le C $, for all $t>0$, we
can estimate
$$
|\langle f(w)-f(\Phi), w_x \rangle| \le C \|w-\Phi\|_{\infty} \|w_x\|_1
\le \tilde{C} e^{-\omega t}.
$$
for $\omega$ in \eqref{fifemcl_expconv}.

 Moreover, with  estimates in Lemma~\ref{lema41FiMc},  Lemma~\ref{lema43FiMc}  and the fact that $\Phi$ approaches its limits at
$\pm\infty$ exponentially fast, there exists $\sigma>0$ such that
\begin{eqnarray*}
| \langle f(\Phi), w_x - \Phi' \rangle | &=& \lim _{L\to \infty} \left|
\int_{-L}^L f(\Phi)(w_x - \Phi')\, dx \right| \\
&\le& \limsup_{L\to \infty} \left| f(\Phi)(w(\cdot,t)-\Phi)|_{x=-L}^L + \int_{-L}^L f'(\Phi)\Phi'(w - \Phi) \right| \\
&\le& \lim_{L\to \infty} | C(e^{-\sigma L}+ e^{-\mu t}) | +
\left| \langle f'(\Phi)\Phi',w - \Phi \rangle \right| \\
&\le& Ce^{-\mu t} + \tilde{C}\|w-\Phi\|_{\infty} \le Ce^{-\mu t} +
\tilde{C}e^{-\omega t}.
\end{eqnarray*}

In a similar way,
\begin{eqnarray}\label{wxU'}
| \langle  w_x - \Phi', w_x + \Phi' \rangle | &\le& \lim_{L\to \infty} |
C(e^{-\sigma L}+ e^{-\mu t}) | +
\left| \langle w-\Phi, w_{xx} + \Phi'' \rangle \right| \nonumber \\
&\le& Ce^{-\mu t} + \|w-\Phi\|_{\infty} \|w_{xx} + \Phi''\|_1 \nonumber \\
&\le& Ce^{-\mu t} + \tilde{C}e^{-\omega t},
\end{eqnarray}
where we used that $w_{xx}$ and $\Phi''$ are in $L^1(\bR)$, that
$\|w_{xx}\|_1 = \|u_{xx}\|_1$ and \eqref{uxx}.

This implies that
$$
|\lambda_v(t)-\cs | \leq Ce^{-\mu t} + \tilde{C}e^{-\omega t}
$$

Observe now that from Remark \ref{nota_exprate} we can choose both $\mu, \omega<\omega_0$ but arbitrarily close to $\omega_0$ (defined in \eqref{exprate}).
Hence if $\omega^*<\omega_0$, we may choose $\omega^*<\mu,\omega<\omega_0$ which implies
$Ce^{-\mu t} + \tilde{C}e^{-\omega t}\leq C_1e^{-\omega^*t}$ and this  shows \eqref{conver_speed}.

$(ii)$ The estimate \eqref{conver_speed} implies the convergence of the
integral
$$
\int_0^{\infty} (\lambda_v(s)-\cs ) \,ds = A, \quad \mbox{ for some }
A\in \bR,
$$
since it is absolutely convergent. Moreover,
$$
|\gamma_v(t) - \cs t - A| = \left|\int_t^{\infty} (\lambda_v(s) - \cs )
\,ds \right| \le C_1 \int_t^{\infty} e^{-\omega^* s} \,ds =
\tilde{C}e^{-\omega^* t}.
$$
Then, setting $x^*=x_0-A$, for $x_0$ in \eqref{fifemcl_expconv}, and
applying Lemma~\ref{lema43FiMc}, Theorem~\ref{th:FiMc}, and
\eqref{cotamu} it follows
\begin{eqnarray*}
&& |v(x,t) - \Phi(x-x^*)| = |w(x+A + \gamma_v(t)-\cs t-A,t) - \Phi(x+A-x_0)|
\\
& & \qquad \le \,  |w(x+A + \gamma_v(t)-\cs t-A,t)-w(x+A,t)| +
|w(x+A,t)-
\Phi(x+A-x_0)| \\
& & \qquad  \le \, \sup_{x\in \bR} (|w_x(x,t)| )\,
|\gamma_v(t)-\cs t-A| + |w(x+A,t)- \Phi(x+A-x_0)| \\
& & \qquad \le \,  C_2 e^{-\omega^* t}.
\end{eqnarray*}
which concludes the proof of the theorem. \end{proof}

\par\medskip

The analogous result holds also for problem \eqref{modifpb2}.
\begin{theorem}\label{th:v2}
Under the hypotheses of Theorem~\ref{th:FiMc} and assuming further
that $u_0 \in \dot{W}^{1,2}(\bR)$, the augmented problem
\eqref{modifpb2} is well-posed and its solution $v$ is given by
\eqref{v2}-\eqref{gamma2}.

Let any $\bar\omega<\omega_0$, $\omega_0$ being defined as in \eqref{exprate}.
 Then, there also exist $x^*\in \bR$ and
positive constants $C_1$, $C_2$, such that
\par\medskip\noindent i) for $\cs $ the propagation speed in \eqref{ode}
and $\lambda_v(t) = \gamma_v'(t)$  for $\gamma_v(t)$ in
\eqref{gamma2} it holds
\begin{equation}\label{conver_speed2}
|\lambda_v(t)-\cs | \le C_1e^{-\bar \omega t}, \qquad  t>0.
\end{equation}

\noindent ii) For $\Phi$ the unique (except for translations) solution
to \eqref{ode}, we can estimate
\begin{equation}\label{conver_v2}
|v(x,t)-\Phi(x-x^*)| < C_2 e^{-\bar\omega t}, \qquad x\in \bR, \quad t>0.
\end{equation}
\end{theorem}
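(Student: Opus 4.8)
The plan is to run the proof of Theorem~\ref{th:v} essentially verbatim, the only difference being that for \eqref{modifpb2} the phase speed $\lambda_v(t)=-F(1)/\|v_x(\cdot,t)\|_2^2$ depends on $v$ solely through $\|v_x\|_2^2$. This collapses the three-term splitting used for \eqref{modifpb} into a single term and explains both why the argument is shorter and why the weaker hypothesis $u_0\in\dot{W}^{1,2}(\bR)$ suffices. First I would settle well-posedness: Proposition~\ref{prop:exun}(i) with $p=2$ gives $u_x(\cdot,t)\in L^2(\bR)$ for all $t\ge0$, and part (iii) gives $\beta>0$ with $\|u_x(\cdot,t)\|_2\ge\beta$. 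These are exactly hypotheses (i)--(ii) of Proposition~\ref{propo1-2}, so the change of variables \eqref{v2}--\eqref{gamma2} is legitimate and $v$ is a classical solution of \eqref{modifpb2}. Note that, unlike in Theorem~\ref{th:v}, I never need $u_x\in L^1(\bR)$, since the numerator of $\lambda_v$ is the constant $F(1)$ rather than $\langle f(v),v_x\rangle$.

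For part (i), let $w$ be as in \eqref{defw}. Using the translation invariance $\|v_x\|_2=\|u_x\|_2=\|w_x\|_2$ and the identity $\cs=-F(1)/\|\Phi'\|_2^2$ from \eqref{cquotient2}, I would write
\[
\lambda_v(t)-\cs=-F(1)\Big(\frac{1}{\|w_x\|_2^2}-\frac{1}{\|\Phi'\|_2^2}\Big)=F(1)\,\frac{\|w_x\|_2^2-\|\Phi'\|_2^2}{\|w_x\|_2^2\,\|\Phi'\|_2^2}.
\]
The denominator is bounded below by $\beta^2\|\Phi'\|_2^2>0$, using Proposition~\ref{prop:exun}(iii) and $\Phi'\in L^2(\bR)$ from Remark~\ref{remark1}. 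Hence the whole estimate reduces to controlling the single quantity
\[
\|w_x\|_2^2-\|\Phi'\|_2^2=\langle w_x-\Phi',\,w_x+\Phi'\rangle,
\]
which is exactly the term treated in \eqref{wxU'} inside the proof of Theorem~\ref{th:v} and shown there to be $O(e^{-\mu t})+O(e^{-\omega t})$. Choosing, as in Remark~\ref{nota_exprate}, $\mu,\omega\in(\bar\omega,\omega_0)$ then yields $|\lambda_v(t)-\cs|\le C_1e^{-\bar\omega t}$, i.e. \eqref{conver_speed2}.

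Part (ii) is then identical to the corresponding step of Theorem~\ref{th:v}: \eqref{conver_speed2} makes $\int_0^\infty(\lambda_v(s)-\cs)\,ds=A$ absolutely convergent, whence $|\gamma_v(t)-\cs t-A|\le\tilde{C}e^{-\bar\omega t}$; setting $x^*=x_0-A$ with $x_0$ as in \eqref{fifemcl_expconv} and using $v(x,t)=w(x+\gamma_v(t)-\cs t,t)$ together with Lemma~\ref{lema43FiMc} and Theorem~\ref{th:FiMc}, one obtains \eqref{conver_v2}. No new ideas are needed here.

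The hard part is the justification of \eqref{wxU'} under the weaker integrability hypothesis. In Theorem~\ref{th:v} that bound was obtained by integrating by parts, $\langle w_x-\Phi',w_x+\Phi'\rangle=-\langle w-\Phi,w_{xx}+\Phi''\rangle$, and estimating the right-hand side by $\|w-\Phi\|_\infty\|w_{xx}+\Phi''\|_1$, which used the uniform $L^1$ bound \eqref{uxx} available only when $u_0\in\dot{W}^{1,1}(\bR)$. Since here $u_x$ need not be integrable, I would instead keep the uniform $L^2$ bound on $w_{xx}$ --- obtained from the regularization of \eqref{eq_ux} with $p=2$ exactly as \eqref{uxx} was derived in $L^1$ --- together with $\Phi''\in L^2(\bR)$ from Remark~\ref{remark1}, and pair these against $w-\Phi$ after splitting $\bR$ into a front region $|x|\le L$, where the temporal bound $\|w-\Phi\|_\infty\le Ke^{-\omega t}$ and Cauchy--Schwarz on a bounded interval suffice, and two tails, where the pointwise spatial exponential decay of $w-\Phi$, $w_{xx}$ and $\Phi''$ supplied by Lemma~\ref{lema43FiMc} and the front asymptotics give the remaining decay; optimizing $L=L(t)$ recovers an exponential rate arbitrarily close to $\omega_0$. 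This domain-splitting estimate is the only genuinely new ingredient; everything else transfers mechanically from Theorem~\ref{th:v}.
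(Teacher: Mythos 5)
Your reduction of part (i) to the single estimate \eqref{wxU'} and your treatment of part (ii) coincide with the paper, whose entire proof of this theorem is the remark that it is a ``simplified version'' of Theorem~\ref{th:v} in which only \eqref{wxU'} is needed. You have also correctly located the one real difficulty, which the paper glosses over: as derived in the proof of Theorem~\ref{th:v}, \eqref{wxU'} uses $\|w_{xx}+\Phi''\|_1$ and the uniform bound \eqref{uxx}, which rest on $u_0\in\dot W^{1,1}(\bR)$ and are not available under the hypothesis $u_0\in\dot W^{1,2}(\bR)$ alone.

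However, your proposed repair of the tail estimate does not close. On $\{|x|>L\}$ the pointwise bounds of Lemma~\ref{lema43FiMc} (and the corresponding front asymptotics) have the form $C(e^{-\sigma |x|}+e^{-\mu t})$: the additive $e^{-\mu t}$ term has no spatial decay, so the product $|w-\Phi|\,|w_{xx}+\Phi''|$ carries a contribution of order $e^{-2\mu t}$ pointwise whose integral over an infinite tail is infinite. Replacing the pointwise estimate by Cauchy--Schwarz on the tail would require $\|w-\Phi\|_{L^2(|x|>L)}$ to be finite and exponentially small, and this is not available: under the stated hypotheses $w(\cdot,t)-\Phi$ need not belong to $L^2(\bR)$ at all (take $u_0$ approaching its limits at a logarithmic rate, so that $\partial_x u_0\in L^2(\bR)$ but $u_0-\Phi\notin L^2(\bR)$). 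So the domain-splitting step that you identify as the only genuinely new ingredient is precisely where the argument fails; some genuine integrability of $w-\Phi$ (or of $w_{xx}$) over the tails, uniform in $t$, has to be produced first --- for instance by a weighted energy estimate --- and neither your sketch nor the paper supplies it. A smaller point: the uniform-in-time $L^2$ bound on $w_x$ that you need before invoking parabolic smoothing to control $w_{xx}$ in $L^2$ is not stated in Proposition~\ref{prop:exun} for $p=2$; it must be proved by repeating the argument of part (ii) of that proposition in the $L^2$ setting.
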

The proof of Theorem~\ref{th:v2} is a simplified version of the one
given for Theorem~\ref{th:v}. Only estimate \eqref{wxU'} is needed
in order to prove $(i)$.

\section{Stationary solutions of the non local problem in a bounded interval}
\label{existence-bounded-interval}

In this section we show the existence and uniqueness of a stationary solution of
the non local problem  \eqref{modifpb_interval-intro} and analyze its relation
with the travelling wave solution found in the previous section. We have
divided the section in three subsections. In the first one we analyze the local
problem (see \eqref{pb_interval}) in a bounded domain. The results from this
subsection are used to obtain existence and uniqueness  of stationary solutions
for the non local problem \eqref{modifpb_interval-intro}. Finally, we show that
these stationary solutions approach the travelling wave solution.

\subsection{The local problem in a bounded interval}\label{subsec-local-bounded}
In this subsection we study the existence, uniqueness and properties of stationary solutions of
problem \eqref{pbw} when the domain is truncated to a finite interval. We impose non homogeneous Dirichlet
boundary conditions, i.e., we consider the evolution problem
\begin{equation}\label{pb_interval}
\left\{
\begin{array}{l}
u_t = u_{xx} + cu_x + f(u), \qquad x \in (a,b) , \quad t>0, \\
u(a,t) = 0 ; \ u(b,t) = 1, \qquad t>0, \\
u(x,0) = u_0(x), \qquad x \in [a,b],
\end{array}
\right.
\end{equation}
with $0\le u_0 \le 1$ and analyze its set of equilibria lying between 0 and 1.
Observe that these equilibria are
solutions of the boundary value problem,
\begin{equation}\label{ode_interval}
\left\{\begin{array}{l}
U''(\xi)+cU'(\xi)+f(U(\xi)) = 0, \qquad a < \xi < b,\\
U(a) = 0; \quad U(b)=1, \qquad 0\le U\le 1,
\end{array}\right.
\end{equation}
which can be interpreted as the first coordinate of the solution $(U,V)$ of the
$2\times 2$ ODE
\begin{equation}\label{my-ode-01}
\left\{
\begin{array}{l}
U'=V,     \\
V'=-cV-f(U).
\end{array}
\right.
\end{equation}
satisfying  $U(a)=0$, $U(b)=1$, $0\leq U(\xi)\leq 1$ for all $\xi\in
(a,b)$, where we denote by $'=\frac{d}{d\xi}$.

We will consider now several important properties of the solutions of the
ODE \eqref{my-ode-01}. In the first place, we notice that two different
solutions of \eqref{my-ode-01} do not intersect, due to the uniqueness
of solutions of any initial value problem for this ODE. In what follows we will
make use of this property without further notice. Observe also that since this
ODE is independent of the ``time'' variable $\xi$, we have that if $(U(\xi),
V(\xi))$ is a solution, then $(U(\xi+a), V(\xi+a))$ is also a solution for any
$a\in \bR$. Hence, we may consider solutions of \eqref{my-ode-01} defined for
$\xi\in [0, r]$. We will concentrate mainly on solutions starting at a point of
the form $(U,V)=(0,\theta)$ and will analyze the dependence and properties of
this solutions with respect to $\theta$, the constant $c$ and so on. Since we
are assuming that the nonlinearity $f$ satisfies conditions \eqref{hipof_intro},
then system \eqref{my-ode-01} has three distinguished solutions which are given
by the stationary states $(0,0)$, $(1,0)$ and $(\alpha, 0)$.

There is another special kind of solutions of \eqref{my-ode-01} which will play
an important role in the analysis below and that we will denote as ``simple
solution''.

\begin{definition} A ``simple solution''  is a solution
 $(U(\xi),V(\xi))$, $\xi\in [0,r]$ of  \eqref{my-ode-01}, joining $(0,\theta)$
and $(1,\Upsilon)$ with $\theta,\Upsilon>0$  (that is,  $(U(0),V(0))=(0,
\theta)$ and  $(U(r), V(r))=(0, \Upsilon)$) and with  $0\leq U(\xi)\leq 1$,
$\xi\in [0,r]$ (see Figure \ref{simplesolution}).
\end{definition}

We start proving some results about these ``simple solutions''.

\begin{lemma}\label{simple-solution-lemma}
If $(U,V)$ is a ``simple solution'' then there exists $v_0>0$ such that $V\geq v_0$. Moreover, the curve
$(U(\xi), V(\xi))$, $\xi\in [0,r]$,  can be represented as a function $V=P(U)$ for $0\leq U\leq 1$ and $P(U)>0$ for all $0\leq U\leq 1$.
\end{lemma}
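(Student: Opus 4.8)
The plan is to reduce both assertions of the lemma to the single statement that $V(\xi)>0$ for every $\xi\in[0,r]$. Once this is known, continuity on the compact interval gives $v_0:=\min_{[0,r]}V>0$, which is the first claim. Moreover $V>0$ means $U'=V>0$, so $U$ is strictly increasing and hence a $C^1$ diffeomorphism of $[0,r]$ onto $[0,1]$; writing $\xi=\xi(U)$ for its inverse and setting $P(U):=V(\xi(U))$ represents the orbit as a graph $V=P(U)$, $0\le U\le1$, with $P(U)=V(\xi(U))\ge v_0>0$. Thus the whole lemma follows from the positivity of $V$.

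To prove $V>0$ I argue by contradiction, assuming $V$ vanishes somewhere; since $V(0)=\theta>0$ and $V(r)=\Upsilon>0$, any zero is interior. The key local fact is that on the set $\{V=0\}$ the field \eqref{my-ode-01} reads $V'=-f(U)$, so the sign of $f$ at the relevant $U$-value governs the crossing. First I would record that the orbit cannot pass through any of the three equilibria $(0,0)$, $(\alpha,0)$, $(1,0)$ of \eqref{my-ode-01}, since by uniqueness this would force the solution to be constant, contradicting $U(0)=0$, $U(r)=1$. Consequently, at a \emph{first} zero $\xi_1$ of $V$ (with $V>0$ on $[0,\xi_1)$) one has $V'(\xi_1)=-f(U(\xi_1))\le0$, hence $f(U(\xi_1))\ge0$, which by \eqref{hipof_intro} forces $U(\xi_1)\in[\alpha,1]$, and in fact $U(\xi_1)\in(\alpha,1)$ after discarding the equilibrium values; symmetrically, at the \emph{last} zero $\xi_0$ (with $V>0$ on $(\xi_0,r]$) one has $V'(\xi_0)\ge0$, hence $f(U(\xi_0))\le0$, giving $U(\xi_0)\in(0,\alpha)$.

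The main obstacle is to rule out these interior zeros, i.e. to exclude oscillations of $U$; for this I would use the energy $E(\xi):=\tfrac12V(\xi)^2-F(U(\xi))$, with $F$ as in \eqref{potencial}, which satisfies $E'=-cV^2$ along solutions (since $F'=-f$). The argument splits according to the sign of $c$. If $c\ge0$ then $E$ is non-increasing, so comparing $\xi_1$ with the endpoint $r$ yields $\tfrac12\Upsilon^2-F(1)=E(r)\le E(\xi_1)=-F(U(\xi_1))$; but $U(\xi_1)\in(\alpha,1)$ together with $F'=-f<0$ on $(\alpha,1)$ gives $F(1)<F(U(\xi_1))$, forcing $\tfrac12\Upsilon^2<0$, a contradiction. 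If $c<0$ then $E$ is non-decreasing, and comparing the endpoint $0$ with $\xi_0$ yields $\tfrac12\theta^2=E(0)\le E(\xi_0)=-F(U(\xi_0))$; but $U(\xi_0)\in(0,\alpha)$ together with $F(0)=0$ and $F'=-f>0$ on $(0,\alpha)$ gives $F(U(\xi_0))>0$, so $E(\xi_0)<0<\tfrac12\theta^2$, again a contradiction. (In each case the comparison only needs $E'$ of one sign; strictness is not even required.)

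Hence $V$ cannot vanish, so $V>0$ throughout $[0,r]$, which as explained above yields the uniform bound $V\ge v_0>0$ and the graph representation $V=P(U)$ with $P\ge v_0>0$ on $[0,1]$. The only genuinely delicate point is the exclusion of interior zeros through the energy estimate; the remaining steps—the equilibrium exclusion via uniqueness, the sign bookkeeping for $f$ and $F$, and the inversion of $U$—are routine.
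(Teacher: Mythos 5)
Your proof is correct, but it proceeds by a genuinely different mechanism than the paper's. The paper rules out a first zero $\xi_0$ of $V$ geometrically: after showing $V'(\xi_0)<0$ (the case $V'(\xi_0)=0$ being excluded because it would put the orbit at an equilibrium), it builds the trapping region $R=\{(U,V): 0\le U\le U(\xi_0),\ -\infty<V<P(U)\}$ bounded above by the orbit already traversed, and observes from the direction field that the only exit from $R$ is through $\{U=0,\ V<0\}$, which would force $U<0$ and contradict $0\le U\le 1$; this uses only uniqueness of solutions and the location of the equilibria, not the bistable sign structure of $f$. You instead use the mechanical energy $E=\tfrac12V^2-F(U)$ with $E'=-cV^2$, locate the relevant zero of $V$ (first zero for $c\ge0$, last zero for $c<0$) in the correct subinterval $(\alpha,1)$ or $(0,\alpha)$ via the sign of $V'$ there, and derive $\tfrac12\Upsilon^2<0$ or $\tfrac12\theta^2<0$ from the monotonicity of $E$ and the sign of $F$ on those subintervals. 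Your choices of which zero to pair with which endpoint in each case are exactly the ones that make the argument close, and the reduction of ``$V\le0$ somewhere'' to ``$V$ has an interior zero'' via the intermediate value theorem is sound. What each approach buys: yours is fully quantitative and avoids the slightly informal ``the region can only be left through this boundary piece'' step, at the price of invoking the bistable hypothesis \eqref{hipof_intro} (which is in force here anyway); the paper's argument is independent of the sign structure of $f$ on $(0,1)$ and so would survive under weaker assumptions on the nonlinearity. The concluding step (inverting $U$ to obtain $V=P(U)\ge v_0>0$) is the same in both.
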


\begin{proof}
Observe that by definition it is $V(0)=\theta >0$ and $V(r)=\Upsilon>0$.

\begin{figure}
  \centering
   \includegraphics[width=0.35\textwidth]{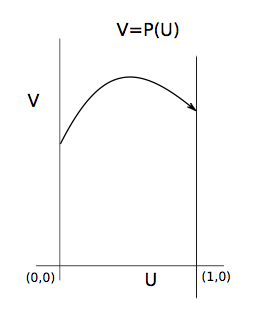} \qquad\qquad
   \includegraphics[width=0.35\textwidth]{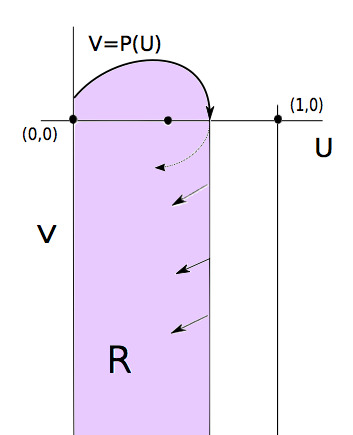}
  \caption{(left) A ``simple solution'' (right) Region R}
  \label{simplesolution}
\end{figure}

If there exists a point $\xi^-\in (0,r)$ with $V(\xi^-)\leq 0$, then let $\xi_0$
be the first point such that $V(\xi)>0$ for $0\leq \xi<\xi_0$ and $V(\xi_0)=0$.
By definition $\xi_0<r$ and $V'(\xi_0)\leq 0$. Again we cannot have
$V'(\xi_0)=0$ since if this is the case, then
$V'(\xi_0)=-cV(\xi_0)-f(U(\xi_0))=-f(U(\xi_0))=0$ and this implies that
$(U(\xi_0),V(\xi_0))$ is an equilibrium point of \eqref{my-ode-01} which is not
the case.  Therefore $V'(\xi_0)<0$ (hence $f(U(\xi_0))>0$) and in particular for
$\xi>\xi_0$ but $\xi$ near $\xi_0$ we have $U(\xi)<U(\xi_0)$ and $V(\xi)<0$.
Moreover, for $0\leq \xi \leq \xi_0$ we have $0\leq U(\xi)\leq U(\xi_0)<1$  (if
$U(\xi_0)=1$, since $V(\xi_0)=0$ we will have again another equilibrium point of
\eqref{my-ode-01}). Hence, the curve $(U(\xi),V(\xi))$ for $0\leq \xi\leq
\xi_0$ joins the point $(0, \theta)$ with $(U(\xi_0),0)$ with $U(\xi)$
increasing and $V(\xi)>0$, that is this curve can be expressed as $V=P(U)$ for
$0\leq U\leq U(\xi_0)$ for some function $P$.

Consider now the region $R=\{ (U,V):  0\leq U\leq U(\xi_0),  \quad  -\infty
<V<P(U)\} $. Using the direction field of \eqref{my-ode-01}, we have that any
initial condition starting in the set $R$ either remains for all times in $R$ or
it leaves the region $R$ through the boundary $\{ (0,V): -\infty <V<0\}$. Since
our solution satisfies $U(b)=1$, necessarily has to leave the region $R$ but
then there will exist a point $\xi_1\in (\xi_0,b)$ with $U(\xi_1)=0$ and
$V(\xi_1)<0$, which implies that $U(\xi_1+\eps)<0$, for $\epsilon>0$ small
enough. This is a contradiction with the fact that $0\leq U(\xi)\leq 1$. Hence
$V(\xi)>0$ for all $\xi\in [0,r]$. The existence of the function $P$ follows
easily now from the fact that $U'(\xi)=V(\xi)>0$ for all $\xi\in [0,r]$.
\end{proof}
\par\medskip

\begin{lemma}\label{basic-ode-lemma}
  The following properties hold:

(i)  If  $V=P(U)$ is a simple solution then the time it takes to go from the point
$(U(\xi_0), V(\xi_0))$ to $(U(\xi_1), V(\xi_1))$, that is $\xi_1-\xi_0$,  is
given by
$$\xi_1-\xi_0= \int_{U(\xi_0)}^{U(\xi_1)}\frac{dU}{P(U)}.
$$
In particular the time it takes to go from $(0, \theta)$ to $(1, \Upsilon)$ is
given by
$$
r= \int_{0}^{1}\frac{dU}{P(U)}.
$$

(ii) If we have two ``simple solutions'' $V=P_0(U)$ and $V=P_1(U)$ satisfying
$P_0(U^*)<P_1(U^*)$  for some $U^*\in [0,1]$, then $P_0(U)<P_1(U)$ for all
$0\leq U\leq 1$.

(iii) If we consider \eqref{my-ode-01} for two constants $c_0<c_1$
and simple solutions $V=P_i(U)$ for the system with $c_i$, $i=0,1$,
respectively, then if $P_0(U^*)=P_1(U^*)$ for some $0<U^*<1$, it is
$P'_0(U^*)>P'_1(U^*)$. In particular the orbits can only cross once (see Figure
\ref{2-simple-solutions}).

(iv) If $\theta>|c|+|f|_{\infty}+1$ where $|f|_\infty= \max\{ |f(s)|, 0\leq
s\leq 1\}$ then the solution starting at $(0,\theta)$ is a simple solution
joining $(0,Ê\theta)$ with $(1, \Upsilon)$ for some $\Upsilon\geq \theta
-|c|-|f|_\infty$. Moreover this simple solution $V=P(U)$ satisfies $P(U)\geq
\theta -(|c|+|f|_\infty)U$ for $0\leq U\leq 1$. In particular
$$
\int_0^1\frac{dU}{P(U)}\to 0, \quad \hbox{ as } \theta\to +\infty.
$$

(v) There exists $\theta_0\geq 0$ such that the solution starting at $(0,
\theta)$ for each $\theta>\theta_0$ is a simple solution  $V=P_\theta(U)$
with
$$
\int_0^1\frac{du}{P_\theta(U)}\to +\infty, \quad \hbox{ as } \theta\to
\theta_0^+.
$$

(vi) For any $\theta_0>0$ there exists a $c_0=c_0(\theta_0)>0$ such that for all
$c>c_0$ any ``simple solution'' starting at $(0, \theta)$ with $\theta>\theta_0$
ends at $(1,\Upsilon)$ with $\Upsilon<\theta$.  Similarly, there exists a
$c_1=c_1(\theta_0)<0$ such that for all $c<c_1$ any ``simple solution'' starting
at $(0, \theta)$ with $\theta>\theta_0$ ends at $(1,\Upsilon)$ with
$\Upsilon>\theta$

\end{lemma}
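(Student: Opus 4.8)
The unifying device for all six parts is to use $U$ as independent variable along a simple solution. Since $U'=V=P(U)>0$ on $[0,r]$ by Lemma~\ref{simple-solution-lemma}, the map $\xi\mapsto U(\xi)$ is a strictly increasing change of variable, and substituting $V'=P'(U)U'=P'(U)P(U)$ into the second equation of \eqref{my-ode-01} shows that $P$ solves the scalar first order equation
\begin{equation*}
P'(U)=-c-\frac{f(U)}{P(U)},\qquad 0\le U\le 1,\qquad P(0)=\theta .
\end{equation*}
Part (i) is then immediate: from $dU/d\xi=P(U)$ we separate variables, $d\xi=dU/P(U)$, and integrate between the two prescribed points.

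For (ii) and (iii) I would argue at the level of this scalar equation. Its right hand side $-c-f(U)/P$ is $C^1$ in $P$ on $\{P>0\}$, so two solutions for the same $c$ cannot meet; hence if $P_0(U^*)<P_1(U^*)$ the strict inequality propagates to all of $[0,1]$, which is (ii). For (iii), with speeds $c_0<c_1$, at a putative common value $P_0(U^*)=P_1(U^*)$ the $f/P$ terms coincide, so $P_0'(U^*)-P_1'(U^*)=c_1-c_0>0$; thus every crossing is transversal with $P_0$ strictly steeper, and a second crossing would force the reverse inequality, so the orbits meet at most once.

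Part (iv) I would prove by a differential inequality against the affine barrier $g(U)=\theta-(|c|+|f|_\infty)U$. Since $\theta>|c|+|f|_\infty+1$ we have $g\ge 1$ on $[0,1]$; using $f(0)=0$ one checks $P-g$ is positive just after $U=0$, and wherever $P=g\ (\ge 1)$ one has $f(U)/P(U)<|f|_\infty$, so $(P-g)'>0$ there, ruling out a first down-crossing. This gives $P(U)\ge\theta-(|c|+|f|_\infty)U>0$, so the orbit is simple with $\Upsilon\ge\theta-|c|-|f|_\infty$, and $\int_0^1 dU/P\le\int_0^1 dU/g\to0$ as $\theta\to+\infty$. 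For (vi) I would instead exploit the sign structure: fix $c_0=|f|_\infty/\theta_0$ and take $c>c_0$. Since $P'>0$ forces $f(U)<-cP(U)<0$ and hence $P(U)<|f|_\infty/c<\theta_0$, every interior local maximum of $P$ has value at most $|f|_\infty/c<\theta_0\le\theta$; moreover $f(0)=f(1)=0$ give $P'(0)=P'(1)=-c<0$, so $P$ is strictly decreasing at both endpoints and attains its maximum on $[0,1]$ only at $U=0$. Therefore $\Upsilon=P(1)<P(0)=\theta$. The case $c<c_1<0$ is symmetric (equivalently, use the reflection $\xi\mapsto-\xi$), yielding $\Upsilon>\theta$.

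Finally, (v) is where the real work lies. I would set $\theta_0=\inf\{\theta>0:\text{the orbit from }(0,\theta)\text{ is simple}\}$, finite by (iv). Non-crossing (as in (ii)) shows this set is upward closed, and continuous dependence on initial data shows it is open, so it equals $(\theta_0,\infty)$ and $P_\theta$ decreases monotonically as $\theta\downarrow\theta_0$. The monotone limit cannot itself be a simple solution, since openness would then place some $\theta<\theta_0$ in the set; hence the limiting orbit fails to reach $(1,\Upsilon)$ with $\Upsilon>0$. The main obstacle is to convert this failure into divergence of $\int_0^1 dU/P_\theta$: I expect the threshold orbit to be precisely the stable manifold of the saddle $(1,0)$ (a saddle because $f'(1)<0$ in \eqref{hipof_intro}), along which $P(U)\sim\mathrm{const}\cdot(1-U)$ near $U=1$, so that $\int^1 dU/P$ diverges logarithmically and monotone convergence yields $\int_0^1 dU/P_\theta\to+\infty$. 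The delicate case to exclude is a turning point at an interior $U^*<1$ with $f(U^*)>0$, where $P$ would vanish like $\sqrt{U^*-U}$ and the integral would remain finite; ruling this out, i.e. identifying the limiting orbit with the saddle connection at $(1,0)$, is the crux of the argument.
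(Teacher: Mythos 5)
Your parts (i)--(iv) and (vi) are correct and essentially coincide with the paper's arguments: the paper phrases (iv) and (vi) geometrically (the direction field points to one side of the segment $V=\theta-(|c|+|f|_\infty)U$, resp.\ downwards on the horizontal line $V=\theta$), while you phrase the same facts analytically through the scalar equation $P'(U)=-c-f(U)/P(U)$; the substance is identical, and your endpoint/local-maximum analysis in (vi) is a valid variant of the paper's barrier-line argument.

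The genuine gap is in (v), and you have flagged it yourself: you do not prove that $\int_0^1 dU/P_\theta\to+\infty$ as $\theta\to\theta_0^+$, because you do not exclude the scenario in which the threshold orbit meets $\{V=0\}$ transversally at an interior point $U^*$, where $P$ would vanish like $\sqrt{U^*-U}$ and the time integral would stay bounded. This is exactly the step the paper closes with ``continuity of the flow'', and it can be made precise as follows. At a first touching point of $\{V=0\}$ one has $V'=-f(U^*)\le 0$, so either $f(U^*)=0$ (then $(U^*,0)$ is an equilibrium of \eqref{my-ode-01}, reached only in infinite time) or $f(U^*)>0$ and the crossing is transversal. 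In the transversal case the $\theta_0$-orbit satisfies $V(\xi^*+\eps)<0$ at some finite time $\xi^*+\eps$; by continuous dependence on initial data over the compact interval $[0,\xi^*+\eps]$, every orbit issued from $(0,\theta)$ with $\theta$ close enough to $\theta_0$ also enters $\{V<0\}$ and hence, by Lemma \ref{simple-solution-lemma}, is not a simple solution --- contradicting that all $\theta>\theta_0$ give simple solutions. Therefore the threshold orbit converges, in infinite time, to one of the equilibria $(\alpha,0)$ or $(1,0)$ (or $\theta_0=0$ and it is the saddle $(0,0)$ itself), and then for every $T>0$ continuous dependence on $[0,T]$ gives $r(\theta)=\int_0^1 dU/P_\theta(U)>T$ for $\theta$ sufficiently close to $\theta_0$. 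Note also that your proposed identification of the threshold orbit with the stable manifold of the saddle $(1,0)$ need not hold --- convergence to $(\alpha,0)$ is equally possible when that point is a stable focus --- but it is not needed: all that matters is that the limiting orbit takes infinite time, not which equilibrium it approaches.
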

\begin{proof} $(i)$ It follows from standard results. Notice that the first
equation of  \eqref{my-ode-01} reads $\frac{dU}{d\xi}=V=P(U)$ which implies
$d\xi=\frac{dU}{P(U)}$ from where we obtain the result via integration.

\par $(ii)$ This follows from the uniqueness of solutions of the ODE
\eqref{my-ode-01}.
\par $(iii)$ Note that $\frac{dP(U)}{dU}=-c+f(U)/P(U)$. Hence, if
$P_0(U^*)=P_1(U^*)$, then $P'_0(U^*)=-c_0+f(U^*)/P(U^*)>
-c_1+f(U^*)/P(U^*)=P'_1(U^*)$

\begin{figure}
  \centering
  \includegraphics[width=0.45\textwidth]{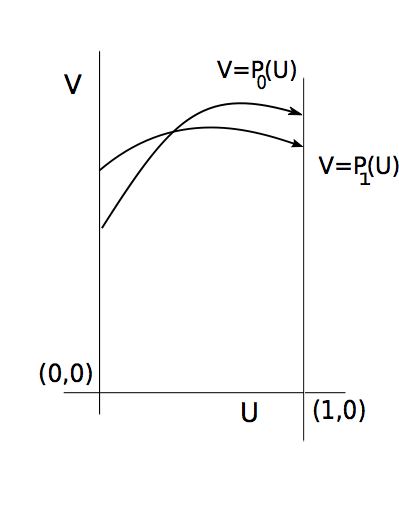}
      \caption{Two simple solutions $V=P_0(U)$, $V=P_1(U)$ for $c_0<c_1$ which crossing.}
  \label{2-simple-solutions}
\end{figure}

\par $(iv)$ In the $(U,V)$ plane, the segment joining $(0,\theta)$ with $(1,
\Upsilon^*)$ is given by $V=(\Upsilon^*-\theta)U+\theta$, for $0\leq U\leq 1$.
Hence, if $\theta>|c|+|f|_{\infty}+1$ and $\Upsilon^*=
\theta-|c|-|f|_{\infty}$, the straight line is $V= - (|c|+|f|_{\infty})U
+\theta$ and notice that we have $V>1$ for all $(U,V)$ in the segment. But, in
this segment, the direction field of the ODE \eqref{my-ode-01} points ``toward
the right''  of the straight line, since the scalar product of its normal
vector, $(|c|+|f|_\infty,1)$,  with the direction field $(V, -cV+f(U))$  is
$$(|c|+|f|_\infty) V-cV+ f(U)=V(|c|-c)+|f|_\infty V+f(U)>0$$
since if $(U,V)$ lies in the straightline then $V> 1$, see Figure \ref{prueba-lemma}.

\begin{figure}
  \centering
  \includegraphics[width=0.60\textwidth]{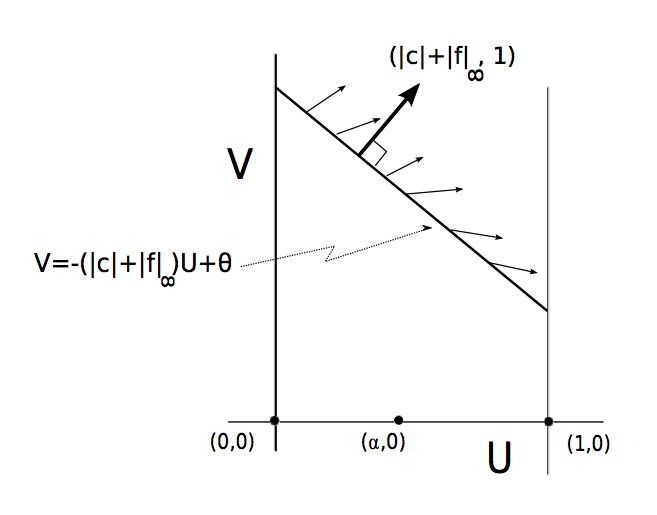}
      \caption{ The segment and the direction field.}
  \label{prueba-lemma}
\end{figure}

This implies that if we start at $(0, \theta)$ then the orbit of the ODE cannot
cross this line segment and therefore we will have
$P(U)>\theta-(|c|+|f|_\infty)U$ for all $0\leq U\leq 1$ and in particular it
will arrive at a point $(1,\Upsilon)$ with $\Upsilon>\Upsilon^*$.  The last
statement of this item is obvious from the bounds of $P(U)$ and taking
$\theta\to +\infty$.

\par $(v)$ It is clear from $(iv)$ that there exists $\theta_1>0$ such that the
orbit starting at $(0, \theta)$ for $\theta\geq \theta_1$ are all simple
solutions. Moreover, from Lemma \ref{simple-solution-lemma} and the continuous
dependence of the solutions of the ODE with respect to the initial conditions,
we have that if the solution starting at a point $(0, \theta^*)$ is a simple
solution, then there exists a small $\eps>0$ such that for each $\theta\in
(\theta^*-\eps, \theta^*+\eps)$ the solution is also a simple solution. Let us
define
$$\theta_0=\inf\{ \theta:  \hbox{ the orbit starting at }(0,\theta^+)  \forall
\theta^+>\theta \hbox{ is a simple solution }\}$$
Then, by definition  the orbit starting at $(0, \theta_0)$ is not a simple
solution. But, necessarily this orbit satisfies $0\leq U\leq 1$ and, if it
reaches a point of the form $(1, \Upsilon)$ in  finite time, by Lemma
\ref{simple-solution-lemma} it will also be a simple solution. Therefore by the
continuity of the flow, we necessarily have either $\theta_0=0$ or
$\Upsilon=0$, which leads to
$$
\int_0^1\frac{du}{P_\theta(U)}\to +\infty, \quad \hbox{ as } \theta\to
\theta_0^+.
$$

\par $(vi)$ Observe that if $\theta_0>0$ and $\theta\geq \theta_0$ then, on the
horizontal line $V=\theta$, the vector field of the ODE is $(\theta,
-c\theta+f(U))$. Therefore, if we choose $c_0=|f|_{\infty}/\theta_0>0$, then for
any $c>c_0$ the vector field ``points downwards'' in this line.
This implies that if we start with a solution at $(0,\theta)$ and the solution
travels to a point $(1, \Upsilon)$ then the whole orbit lies below the straight
line $V=\theta$. Hence, $\Upsilon<\theta$. Similarly for the other case.
\end{proof}

\par\medskip  We also have another two distinguised solutions with $0\leq U\leq
1$ and $V\geq 0$, which are usually denoted as the ``unstable orbit'' from the
equilibrium $(0,0)$ and the ``stable orbit'' to the equilibrium $(1,0)$.  These
are special solutions since they are defined either in an interval of the form
$\xi\in (-\infty, r_0)$  or $\xi \in (r_1, +\infty)$.

\begin{lemma}\label{unstable-manifolds}
There exists a unique value $c^*\in \bR$ such that the following holds

\par (i) If $c<c^*$ the unique ``unstable orbit'' emanating from $(0,0)$ with
$0\leq U\leq 1$ ends at a point $(U_0,0)$ for some value $0<U_0<1$ and the
unique ``stable orbit'' converging to $(1,0)$ with $0\leq U\leq 1$ starts at a
point of the form $(0,\theta_0)$, with $\theta_0>0$.

\par (ii) If $c>c^*$ the unique ``unstable orbit'' emanating from $(0,0)$ with
$0\leq U\leq 1$ ends at a point $(1,\Upsilon_1)$ for some value $\Upsilon_1>0$
and the unique ``stable orbit'' converging to $(1,0)$ with $0\leq U\leq 1$
starts at a point of the form $(U_0, 0)$.

\par (iii) If $c=c^*$, both orbits coincide and we have a unique orbit with
$0\leq U\leq 1$ and $V\geq 0$ which comes out from $(0,0)$ (as $\xi\to -\infty$)
and comes in to $(1,0)$  (as $\xi\to +\infty$).

\end{lemma}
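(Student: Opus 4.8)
The plan is to prove this by a phase-plane shooting argument in which the speed $c$ is the single parameter, exploiting the monotone dependence on $c$ of the relevant orbits that was prepared in Lemma~\ref{basic-ode-lemma}. First I would record the local structure at the two rest points. Since $f'(0)<0$ and $f'(1)<0$, linearizing \eqref{my-ode-01} at $(0,0)$ and at $(1,0)$ produces in each case eigenvalues $\mu_\pm=\tfrac12\big(-c\pm\sqrt{c^2-4f'(0)}\big)$ (resp.\ with $f'(1)$) whose product is $f'(0)<0$ (resp.\ $f'(1)<0$), so both points are hyperbolic saddles. At $(0,0)$ the unstable slope $\mu_+(c)>0$ singles out exactly one branch of the unstable manifold leaving into the quadrant $U>0,\ V>0$; call this orbit $W^u_c$. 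Symmetrically, one branch of the stable manifold of $(1,0)$ enters from $U<1,\ V>0$ along the slope $\mu_-(c)<0$; call it $W^s_c$. As long as they remain in the strip $S=\{0\le U\le1,\ V>0\}$ both are graphs $V=P^u_c(U)$ and $V=P^s_c(U)$ of the kind studied in Lemma~\ref{simple-solution-lemma}.

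The monotonicity in $c$ is the engine of the proof. A direct computation shows $\mu_+(c)$ and $\mu_-(c)$ are both strictly decreasing in $c$. Hence, for $c_0<c_1$, the orbit $W^u_{c_1}$ leaves $(0,0)$ strictly below $W^u_{c_0}$; were these graphs to cross later, Lemma~\ref{basic-ode-lemma}(iii) would force the smaller-$c$ graph to be the steeper one at the crossing, which is incompatible with starting ordered and then crossing from above. Thus $P^u_c(U)$ is strictly decreasing in $c$ wherever defined, and running the same comparison backwards from $(1,0)$ gives that $P^s_c(U)$ is strictly increasing in $c$. So as $c$ grows the unstable orbit is pushed down and the stable orbit is pushed up.

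Next I would set up an exhaustive dichotomy for $W^u_c$. By the region-$R$ argument of Lemma~\ref{simple-solution-lemma}, $W^u_c$ cannot leave $S$ through $U=0$, so while in $S$ it either (a) reaches the line $U=1$ at a height $\Upsilon_1(c)>0$, or (b) first returns to $V=0$ at a point $U_0(c)$, which must satisfy $U_0\in(\alpha,1)$ since at a turning point $V'=-f(U_0)\le0$ forces $f(U_0)\ge0$. Both alternatives are transversal, hence open in $c$ by continuous dependence, and the energy identity $\tfrac{d}{d\xi}\big(\tfrac12V^2-F(U)\big)=-cV^2$ keeps $V$ bounded on these orbits and also, combined with the explicit barriers of Lemma~\ref{basic-ode-lemma}(iv),(vi), makes each regime nonempty for $|c|$ large of the appropriate sign. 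By the strict monotonicity of $P^u_c$ the two regimes are the two pieces of a partition of $\bR$ separated by a single value; I define $c^*$ to be this threshold, and uniqueness of $c^*$ is then immediate.

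Finally I would identify the threshold orbit and transfer the conclusion to the stable branch. The key point is that $\Upsilon_1(c)$ (on side (a)) and $U_0(c)$ (on side (b)) are continuous and monotone in $c$, so as $c\to c^*$ one has $\Upsilon_1\to0$ from side (a) and $U_0\to1$ from side (b); in both limits the orbit arrives exactly at $(1,0)$. Hence $W^u_{c^*}$ is a heteroclinic from $(0,0)$ to $(1,0)$ and, by uniqueness of the stable manifold of the saddle $(1,0)$, coincides with $W^s_{c^*}$, which is assertion (iii). For $c\ne c^*$, the monotonicity of the previous paragraph places, on one side of $c^*$, the unstable orbit on the turning-back side ending at $(U_0,0)$ while simultaneously pushing the stable orbit above the heteroclinic so that it reaches the $V$–axis at $(0,\theta_0)$ with $\theta_0>0$, and reverses both on the other side; which side corresponds to which case is fixed by the monotonicity together with the sign relation $\sgn c^*=\sgn F(1)$ of Theorem~\ref{th:FiMc}. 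This yields (i) and (ii). I expect the main obstacle to be precisely the threshold step: excluding that the separating set is an interval and that the limiting orbit does anything other than land on $(1,0)$; the strict $c$-monotonicity of $P^u_c$, the hyperbolicity of $(1,0)$, and continuous dependence are exactly the ingredients that close this gap.
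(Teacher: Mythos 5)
The paper offers no proof of this lemma at all --- it calls it ``a simple exercise of phase plane techniques'' and points to Henry's book --- so there is nothing of the authors' to compare with line by line. Your shooting-in-$c$ scheme (saddles at $(0,0)$ and $(1,0)$, monotone dependence of the two separatrices on $c$ via the slope comparison of Lemma~\ref{basic-ode-lemma}(iii), openness of the two regimes, threshold $=$ heteroclinic, uniqueness from the opposite monotonicities of $P^u_c$ and $P^s_c$) is exactly the standard argument the authors intend, and most of it is sound. Two points, however, are genuine gaps.

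First, your final assignment of cases contradicts your own (correct) monotonicity, and the appeal to $\sgn c^*=\sgn F(1)$ cannot repair it. You prove that $P^u_c$ is strictly \emph{decreasing} in $c$ and $P^s_c$ strictly increasing. Since $W^u_{c^*}$ is the heteroclinic, this forces: for $c<c^*$ the unstable orbit lies \emph{above} the heteroclinic and exits through $U=1$ at height $\Upsilon_1>0$, while for $c>c^*$ it lies below and falls back to $(U_0,0)$; symmetrically, the stable orbit of $(1,0)$ reaches $(0,\theta_0)$ with $\theta_0>0$ precisely when $c>c^*$. That is the \emph{opposite} pairing from the one printed in items (i) and (ii). You can confirm the corrected pairing with $c=0$ (where $V=\sqrt{2F(U)}$ along $W^u_0$) or with the explicit Nagumo wave; note also that the paper itself uses the corrected pairing in the proof of Lemma~\ref{lema:conv_la}, where for $c=\la_\infty+\veps>c^*$ it invokes a stable orbit of $(1,0)$ emanating from $(0,\Upsilon^*)$ with $\Upsilon^*>0$. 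The printed (i) and (ii) are evidently interchanged; a correct write-up must either prove the corrected statement or flag the discrepancy, not leave ``which side is which'' to a sign relation that does not determine it.

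Second, your dichotomy for $W^u_c$ --- reach $U=1$ with $V>0$, or return to $V=0$ at some $U_0\in(\alpha,1)$ --- is not exhaustive: the orbit may remain in the open strip for all $\xi$ and converge to an equilibrium, namely $(1,0)$ (the threshold, which you do recover as a limit) or the interior equilibrium $(\alpha,0)$. The latter is not excluded by your energy identity when $c>0$, since $E=\tfrac12 V^2-F(U)$ decreases from $0$ toward $E(\alpha,0)=-F(\alpha)<0$, and for $c^2\ge 4f'(\alpha)$ the point $(\alpha,0)$ is a stable node approachable from $\{U<\alpha,\ V>0\}$. Until this is ruled out (for instance by showing that the set of such $c$ would be an upper ray disjoint from the nonempty upper ray of turn-back speeds), the complement of your two open regimes is only known to be a closed interval, so ``separated by a single value'' is not yet justified; the clean singleton argument is that two heteroclinic speeds $c_1<c_2$ would give $P^u_{c_1}>P^u_{c_2}$ and $P^s_{c_1}<P^s_{c_2}$ with $P^u_{c_i}=P^s_{c_i}$, a contradiction --- but that only applies once every exceptional $c$ is known to produce the heteroclinic to $(1,0)$. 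A related minor point: Lemma~\ref{basic-ode-lemma}(iv) and (vi) concern simple solutions launched from $(0,\theta)$ with $\theta>0$ and do not literally apply to the separatrices emanating from the equilibria, so the nonemptiness of each regime for $|c|$ large needs its own (easy) barrier argument rather than a citation of those items.
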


\begin{figure}
  \centering
  \includegraphics[width=0.30\textwidth]{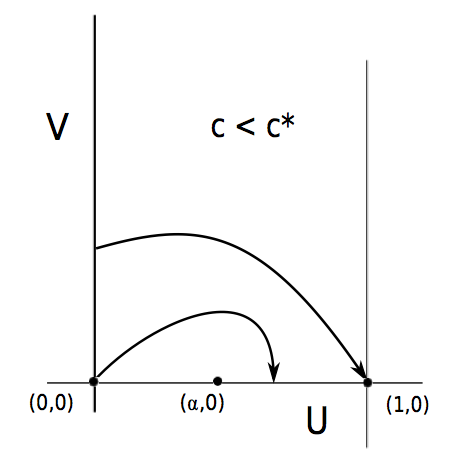}
  \includegraphics[width=0.30\textwidth]{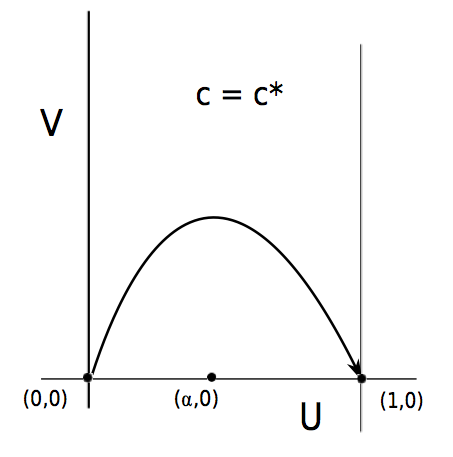}
  \includegraphics[width=0.30\textwidth]{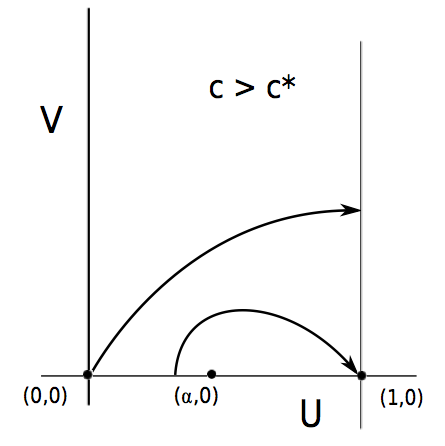}
  \caption{Phase planes for Lemma~\ref{unstable-manifolds}.}
  \label{Dibujo-unstable-manifolds}
\end{figure}

\begin{proof} The proof is a simple exercise of phase plane techniques and we
leave if for the reader. See Figure \ref{Dibujo-unstable-manifolds} and the
book \cite{Henry}  for details. \end{proof}

\begin{nota}
Observe that the value $c^*$ from the previous Lemma is the speed of
propragation of the travelling wave of problem \eqref{originalpb}.
\end{nota}

\par\medskip
With the results above we can prove the first result on existence of solution of
\eqref{ode_interval} with $c$ fixed.

\begin{proposition}\label{lema:equi_cab}
For every $c\in \bR$, there exists a unique solution $\vfi_c$ of
\eqref{ode_interval}. Furthermore, $\vfi_c$ is strictly monotone, that is,
$\vfi'(x)>0$ for all $x\in [a,b]$.
\end{proposition}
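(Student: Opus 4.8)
The plan is to prove existence and uniqueness of the solution $\vfi_c$ to the boundary value problem \eqref{ode_interval} by exploiting the phase-plane framework developed in Lemmas \ref{simple-solution-lemma}--\ref{unstable-manifolds} above. The strategy is to view any solution of \eqref{ode_interval} with $0\le U\le 1$ as a ``simple solution'' of the ODE system \eqref{my-ode-01}, parametrized by its starting height $\theta=V(0)>0$ on the segment $\{(0,\theta):\theta>0\}$, and then to show that the map $\theta\mapsto$ (transit time from $(0,\theta)$ to the line $U=1$) hits the prescribed value $b-a$ exactly once.

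First I would observe that any equilibrium $U$ of \eqref{pb_interval} lying strictly between $0$ and $1$ must correspond to an orbit of \eqref{my-ode-01} with $U(a)=0$, $U(b)=1$, $0\le U\le 1$; by Lemma \ref{simple-solution-lemma} this orbit is necessarily a simple solution (one checks $V>0$ throughout, so $U$ is strictly increasing, which incidentally already yields the monotonicity claim $\vfi_c'>0$). Thus existence and uniqueness of $\vfi_c$ is equivalent to existence and uniqueness of a simple solution whose transit time equals the interval length $L:=b-a$. Define, for each admissible $\theta$, the function
\begin{equation*}
T(\theta)=\int_0^1 \frac{dU}{P_\theta(U)},
\end{equation*}
where $V=P_\theta(U)$ is the simple solution starting at $(0,\theta)$, as furnished by Lemma \ref{basic-ode-lemma}(i). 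By Lemma \ref{basic-ode-lemma}(v), simple solutions exist precisely for $\theta>\theta_0$ (for the threshold $\theta_0\ge0$ defined there), so the natural domain of $T$ is $(\theta_0,+\infty)$.

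The heart of the argument is then to analyze $T$ on $(\theta_0,\infty)$ and apply the intermediate value theorem together with strict monotonicity. For the limits: Lemma \ref{basic-ode-lemma}(iv) gives $T(\theta)\to 0$ as $\theta\to+\infty$, while Lemma \ref{basic-ode-lemma}(v) gives $T(\theta)\to+\infty$ as $\theta\to\theta_0^+$. Since $T$ is continuous in $\theta$ (by continuous dependence of ODE solutions on initial data, which also justifies differentiating under the integral sign), the range of $T$ is all of $(0,+\infty)$, so there exists at least one $\theta^*$ with $T(\theta^*)=L$, giving existence. For uniqueness I would show $T$ is \emph{strictly decreasing}: two distinct simple solutions $P_{\theta_1}$, $P_{\theta_2}$ with $\theta_1<\theta_2$ cannot cross (they are distinct orbits of the same autonomous system, so this is immediate from uniqueness of the initial value problem, i.e. Lemma \ref{basic-ode-lemma}(ii)), hence $P_{\theta_1}(U)<P_{\theta_2}(U)$ for all $U\in[0,1]$; therefore $1/P_{\theta_1}(U)>1/P_{\theta_2}(U)$ pointwise and $T(\theta_1)>T(\theta_2)$. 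Strict monotonicity forces the $\theta^*$ solving $T(\theta^*)=L$ to be unique, and since the simple solution is uniquely determined by $\theta^*$, the equilibrium $\vfi_c$ is unique.

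The main obstacle I anticipate is the careful handling of the endpoints of the integral defining $T(\theta)$, where $P_\theta$ could in principle vanish or the orbit could approach the equilibria $(0,0)$ or $(1,0)$. Lemma \ref{simple-solution-lemma} is precisely what rescues this: it guarantees $P_\theta(U)\ge v_0>0$ uniformly on $[0,1]$ for a genuine simple solution, so the integrand is bounded and $T(\theta)$ is finite and well-defined for every $\theta>\theta_0$. The only subtlety is the boundary behaviour as $\theta\to\theta_0^+$, where the orbit degenerates (either $\theta_0=0$ or the endpoint $\Upsilon\to0$, per the proof of Lemma \ref{basic-ode-lemma}(v)), and this is exactly the regime producing the divergence $T\to+\infty$ that we need for the intermediate value argument to cover all lengths $L>0$. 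Once this is in place, the proof is essentially a one-dimensional monotone shooting argument.
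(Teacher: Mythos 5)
Your proposal is correct and follows essentially the same route as the paper: reduce the boundary value problem to finding a simple solution with transit time $r(\theta)=\int_0^1 dU/P_\theta(U)$ equal to $b-a$, obtain existence from the limits in Lemma~\ref{basic-ode-lemma}(iv)--(v) via the intermediate value theorem, and obtain uniqueness (and the monotonicity of $r(\theta)$ in $\theta$) from the non-crossing property in Lemma~\ref{basic-ode-lemma}(ii). Your version simply spells out the monotone shooting argument that the paper's proof states more tersely.
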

\begin{proof}
Observe that $\vfi_c$ is a solution of
\eqref{ode_interval} if and only if $(U,V)=(\vfi_c,\vfi_c')$ is a simple
solution as defined above. But if we consider the value $\theta_0$ from Lemma
\ref{basic-ode-lemma} $(v)$ and the simple solution $V=P_\theta(U)$
which starts at $(0,\theta)$ then the time it takes to go from $(0, P(0))$ to
$(1, P(1))$ is given by
$$
r(\theta)=\int_0^1\frac{dU}{P_\theta(U)},
$$
which by Lemma \ref{basic-ode-lemma} $(iv)$ and $(v)$ satisfy that there exists
at least one $\theta$ for which $r(\theta)=b-a$.
From  Lemma \ref{basic-ode-lemma} $(ii)$ we have the uniqueness.
\hfill\end{proof}

\par\medskip

As we show below, the monotonicity of the equilibrium, yields its
asymptotic stability.

\begin{proposition}\label{propo:stabinter}
Let $c\in \bR$ and the equilibrium solution $\vfi_c$ of
\eqref{pb_interval}. Then, there exists $K,a > 0$ such that for
$\|u_0-\vfi_c\|_{\infty}$ small enough it holds
\begin{equation}\label{asymstab_inter}
\|u(\cdot,t)- \vfi_c\|_{\infty} \le Ke^{-a t},
\end{equation}
for $u$ the solution of \eqref{pb_interval}.
\end{proposition}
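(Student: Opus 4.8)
The plan is to invoke the principle of linearized stability for semilinear parabolic equations (see \cite[Theorem 5.1.1]{Henry}): since \eqref{pb_interval} generates a smooth local semiflow in, say, $C_0([a,b])$ and $\vfi_c$ is a $C^2$ equilibrium, the asymptotic stability with exponential rate \eqref{asymstab_inter} will follow once I show that the spectrum of the linearization about $\vfi_c$ lies strictly in the left half-plane. Writing $u=\vfi_c+w$, the linearized operator is
$$
Lw = w_{xx} + c\,w_x + f'(\vfi_c(x))\,w, \qquad w(a)=w(b)=0,
$$
so the whole task reduces to proving $\sup\{\real z : z\in\sigma(L)\}<0$.

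First I would put $L$ in self-adjoint form. Multiplying by the weight $e^{cx}$ turns the eigenvalue equation $Lw=\lambda w$ into the regular Sturm--Liouville problem $(e^{cx}w_x)_x + f'(\vfi_c)e^{cx}w = \lambda e^{cx} w$ with Dirichlet boundary conditions, which is self-adjoint with respect to the inner product $\langle u,v\rangle_{e^{cx}}=\int_a^b e^{cx} u v\,dx$. Standard Sturm--Liouville theory then guarantees that $\sigma(L)$ consists of a decreasing sequence of real simple eigenvalues $\lambda_0>\lambda_1>\cdots\to-\infty$, and that the principal eigenvalue $\lambda_0$ is the only one whose eigenfunction $\phi_0$ does not change sign; I normalize $\phi_0>0$ on $(a,b)$, with $\phi_0(a)=\phi_0(b)=0$ and hence, by Hopf's lemma, $\phi_0'(a)>0$ and $\phi_0'(b)<0$. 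It therefore suffices to show $\lambda_0<0$.

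The key observation is that differentiating the equilibrium equation $\vfi_c''+c\vfi_c'+f(\vfi_c)=0$ in $x$ shows that $\psi:=\vfi_c'$ solves $L\psi=0$; by Proposition~\ref{lema:equi_cab} we have $\psi=\vfi_c'>0$ on all of $[a,b]$, including the endpoints, although $\psi$ does not satisfy the homogeneous boundary conditions. Applying the Lagrange--Green identity to the self-adjoint form with $\phi_0$ and $\psi$ gives
$$
-\lambda_0\int_a^b e^{cx}\psi\,\phi_0\,dx
=\Big[e^{cx}\big(\phi_0\,\psi'-\psi\,\phi_0'\big)\Big]_a^b
=-e^{cb}\psi(b)\phi_0'(b)+e^{ca}\psi(a)\phi_0'(a),
$$
where the $\phi_0\psi'$ contributions drop out because $\phi_0$ vanishes at $a,b$. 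Since $e^{cx}>0$ and $\psi>0$ everywhere, while $\phi_0'(b)<0$ and $\phi_0'(a)>0$, the right-hand side is strictly positive; as the integral on the left is strictly positive as well, I conclude $\lambda_0<0$. This places $\sigma(L)$ strictly in $\{\real z<0\}$, and the claimed exponential stability follows.

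The main obstacle is precisely the sign analysis of the boundary terms: one must argue carefully that the \emph{principal} eigenfunction is the sign-definite one and that its normal derivatives at $a$ and $b$ carry the signs dictated by the boundary maximum principle, and one must use that $\vfi_c'$ is strictly positive up to and including the endpoints — which is exactly the content of Proposition~\ref{lema:equi_cab}. The remaining ingredients (generation and smoothness of the semiflow, sectoriality of $L$, and the reduction of nonlinear stability to the linearized spectrum) are standard and I would simply cite \cite{Henry}.
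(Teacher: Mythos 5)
Your proof is correct, and it reaches the key spectral conclusion $\sup\{\real z:\, z\in\sigma(L_0)\}<0$ by a genuinely different route from the paper. The paper does \emph{not} work with the Dirichlet principal eigenpair directly: it observes that $\phi=\vfi_c'$ satisfies the linearized ODE together with the Robin-type conditions $\phi'(a)+c\phi(a)=\phi'(b)+c\phi(b)=0$ (obtained by evaluating the equilibrium equation at the endpoints and using $f(0)=f(1)=0$), transforms via $z=e^{cx/2}q$ to a self-adjoint operator $B=-\partial_{xx}+(c^2/4-f'(\vfi_c))$, invokes Krein--Rutman to identify $0$ as the \emph{smallest} eigenvalue of $B$ under those Robin conditions (since $e^{cx/2}\vfi_c'>0$ is an eigenfunction), and then uses strict monotonicity of the Rayleigh quotient under restriction of the form domain from $H^1(a,b)$ to $H^1_0(a,b)$ to conclude that the bottom Dirichlet eigenvalue of $B$ is strictly positive. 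You instead stay entirely within the Dirichlet problem and read off the sign of $\lambda_0$ from the Wronskian boundary terms in the Lagrange--Green identity between the principal eigenfunction $\phi_0$ and the positive homogeneous solution $\psi=\vfi_c'$; your sign bookkeeping is right, and the strict signs $\phi_0'(a)>0$, $\phi_0'(b)<0$ need not even appeal to Hopf's lemma in this one-dimensional setting (ODE uniqueness forbids $\phi_0(a)=\phi_0'(a)=0$). Both arguments hinge on the same two inputs --- $\vfi_c'>0$ up to the closed interval (Proposition~\ref{lema:equi_cab}) and the principle of linearized stability from \cite{Henry} --- but yours avoids Krein--Rutman and the need to justify the strict inequality in the variational comparison, at the price of having to know that the sign-definite Dirichlet eigenfunction is the principal one, which is standard regular Sturm--Liouville theory. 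No gap.
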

\begin{proof}
We consider the linearization about $\vfi_c$ of \eqref{modifpb_interval}
\begin{equation}\label{lin_inter}
\left\{\begin{array}{l}
u_t = u_{xx} + c u_x + f'(\vfi_c)u, \\
u(a,t) = u(b,t) = 0.
\end{array}\right.
\end{equation}
We need information about the spectrum of the operator
\begin{equation}\label{opelin}
L_0 q := q'' + c q' + f'(\vfi_c)q,
\end{equation}
in $D(L_0) = \{q\in \cC^2[a,b] : q(a) = q(b) = 0 \}$. Deriving the equation in
\eqref{ode_interval} we obtain that $\phi=\vfi'_c
> 0$ satisfies the boundary value problem
\begin{equation}\label{robantirob}
\left\{\begin{array}{l}
\phi'' + c \phi'_c + f'(\vfi_c) \phi = 0, \\
\phi'(a) + c \phi(a) = 0, \qquad \phi'(b) + c \phi(b) = 0,
\end{array}\right.
\end{equation}
where the boundary conditions are obtained by evaluating
\eqref{ode_interval} at $\xi = a$ and $\xi=b$ and using that
$f(0)=f(1)=0$. The change of variables $z(\xi) = e^{\frac c2 \xi}q(\xi)$
in \eqref{robantirob} leads to the self-adjoint problem in
$L^2(a,b)$
\begin{equation}\label{selfad_robantirob}
\left\{\begin{array}{l} \displaystyle -z'' + \left( \frac{c^2}4 -
f'(\vfi_c)\right) z = 0,
\\[1em]
\displaystyle z'(a) + \frac c2 z(a) = 0, \qquad z'(b) + \frac c2
z(b) = 0,
\end{array}\right.
\end{equation}
For \eqref{selfad_robantirob}, the positive mapping $\xi\to e^{\frac
c2\xi} \phi(\xi) > 0$ is an eigenfunction associated to the eigenvalue
0. Then, by Krein-Rutman's Theorem, 0 is the smallest eigenvalue
of the operator
\begin{equation}\label{opself}
Bq := -z'' + \left( \frac{c^2}4 - f'(\vfi_c)\right) z,
\end{equation}
for the boundary conditions in \eqref{selfad_robantirob}. Then,
\begin{eqnarray*}
0 &=& \min_{H^1(a,b)} \frac{\int_a^b
\left[(z')^2 + \left( \frac {c^4}4 - f'(\vfi_c) \right) z^2
\right]\, d\xi + \frac {|c|}2
\left(z^2(a)-z^2(b) \right)}{\int_a^b z^2 \,d\xi } \\
&<& \min_{H^1_0(a,b)} \frac{\int_a^b  \left[(z')^2 + \left( \frac
{c^4}4 - f'(\vfi_c) \right) z^2 \right]\, d\xi }{\int_a^b z^2
\,d\xi},
\end{eqnarray*}
so that the smallest eigenvalue of $B$ with homogeneous Dirichlet boundary
condition is strictly positive. This implies $\sigma(L_0) \subset \{z\in \bC:
\real z < 0\}$, yielding the asymptotic stability of $\vfi_c$. \hfill
\end{proof}

\begin{corollary}\label{negative-spectrum}
With the notations above, we have $\sigma(L_0)\subset \{z\in \bC:
\real z < 0\}$.
\end{corollary}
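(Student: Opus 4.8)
The plan is to observe that the desired spectral containment has, in effect, already been produced inside the proof of Proposition~\ref{propo:stabinter}; the corollary simply records it. To make the argument self-contained I would reuse the change of variables $z(\xi)=e^{\frac{c}{2}\xi}q(\xi)$ employed there. On the compact interval $[a,b]$ the multiplication operator $T\colon q\mapsto e^{\frac{c}{2}\xi}q$ is bounded with bounded inverse, hence it is a similarity and therefore preserves the spectrum; moreover, since $e^{\frac{c}{2}\xi}$ never vanishes, it carries the Dirichlet domain $D(L_0)$ onto the Dirichlet domain of the transformed operator.

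First I would carry out the elementary computation showing that under this conjugation the operator $L_0$ in \eqref{opelin} is sent exactly to $-B$, with $B$ the self-adjoint Sturm--Liouville operator of \eqref{opself} equipped with homogeneous Dirichlet conditions. Concretely, writing $q=e^{-\frac{c}{2}\xi}z$ one finds $L_0 q=-e^{-\frac{c}{2}\xi}Bz$, so that $TL_0T^{-1}=-B$. It follows that $\sigma(L_0)=\sigma(-B)=-\sigma(B)$. Since $B$ is self-adjoint with compact resolvent, its spectrum is real and purely discrete, and the Rayleigh-quotient comparison already performed in the proof of Proposition~\ref{propo:stabinter} shows that the smallest Dirichlet eigenvalue of $B$ is strictly positive. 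Therefore every element of $\sigma(L_0)$ is a strictly negative real number, which yields $\sigma(L_0)\subset\{z\in\bC:\real z<0\}$.

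The only subtle point worth flagging is that $L_0$ is itself non-self-adjoint, owing to the first-order term $cq'$, so a priori one cannot exclude complex spectrum by direct inspection. The similarity to the self-adjoint operator $B$ is precisely what forces the spectrum to be real, and then negative; this is the step I would be most careful to state cleanly. Because the interval is bounded there is no essential spectrum to treat separately, so the discrete spectral analysis above exhausts $\sigma(L_0)$ and completes the proof.
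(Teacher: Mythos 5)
Your proposal is correct and follows essentially the same route as the paper: the corollary is indeed just the conclusion already reached inside the proof of Proposition~\ref{propo:stabinter}, via the conjugation $z=e^{\frac{c}{2}\xi}q$ turning $L_0$ into $-B$ and the Rayleigh-quotient comparison showing the first Dirichlet eigenvalue of $B$ is strictly positive. Your explicit verification that $TL_0T^{-1}=-B$ preserves the Dirichlet domain and forces the spectrum to be real is a welcome clarification of a step the paper leaves implicit, but it is not a different argument.
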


\subsection{The nonlocal problem in a bounded interval}\label{subsec-nonlocal-bounded}
In this section we study the existence of equilibria of the non local equations
\eqref{modifpb} and \eqref{modifpb2} restricted to a finite interval with
nonhomogeneus Dirichlet boundary conditions. It is immediate to see that the resulting
Initial and Boundary Value Problems (IBVP) for both equations are the same,
namely:
\begin{equation}\label{modifpb_interval}
\left\{
\begin{array}{l}
\displaystyle  u_t = u_{xx} +\la(u_x) u_x + f(u),
\qquad x \in (a,b) , \quad t>0, \\[.5em]
\displaystyle \la(z(\cdot)) = - \frac{F(1)}{\|z(\cdot)\|_{L^2(a,b)}^2}\in \bR, \\[1em]
u(a,t) = 0 ; \ u(b,t) = 1, \qquad t>0, \\
u(x,0) = u_0(x), \qquad x \in [a,b],
\end{array}
\right.
\end{equation}

We have the following

\begin{proposition}\label{well-posednessÁ}
Problem \eqref{modifpb_interval} is locally well posed, in the sense that for
any initial data $u_0\in H^1(a,b)$ with
$u_0(a)=0$, $u_0(b)=1$ there exists a $T=T(u_0)$ and a unique classical solution
$u(x,t)$ defined for the time interval $[0,T(u_0))$.

Moreover, if $0\leq u_0\leq 1$, then the solution $u(x,t)$ is globally defined
and it also satisfies $0\leq u(x,t)\leq 1$.
\end{proposition}

\begin{proof}
Observe that problem \eqref{modifpb_interval} can be rewritten as a more
standard problem with homogeneous Dirichlet boundary conditions with the change
of variables: $u(x)=w(x)+h(x)$, where the function
$h(x)=\frac{x-a}{b-a}$. Notice that if $u(a)=0$, $u(b)=1$, then
$w(a)=w(b)=0$ and problem \eqref{modifpb_interval} takes the form

\begin{equation}\label{modifpb_interval-h10}
\left\{
\begin{array}{l}
\displaystyle  w_t = w_{xx} +F(w)
\qquad x \in (a,b) , \quad t>0, \\
w(a,t)=w(b,t) = 0,\qquad t>0, \\
w(x,0) = w_0(x)\equiv u_0(x)+h(x), \qquad x \in [a,b],
\end{array}
\right.
\end{equation}
where $F:H^1_0(a,b)\to L^2(a,b)$ is the map defined by
$F(w)=\la(w_x+\frac{1}{b-a}) (w_x +\frac{1}{b-a})+ f(w+h(\cdot))$.
We can easily see that this map is well defined, since if $w\in H^1_0(a,b)$
evaluating in the following expression, we have
$$\left\| w_x+\frac{1}{b-a}
\right\|_{L^2(a,b)}^2=\|w_x\|^2_{L^2(a,b)}+\frac{1}{b-a} \geq\frac{1}{b-a}$$
and therefore the denominator in the function $\lambda$ is bounded away from 0
and $\lambda(w_x+\frac{1}{b-a})$ is well defined. Moreover, following standard
arguments, the map $F$ is Lipschitz on bounded sets of $H^1_0(a,b)$ which from
standard techniques, see \cite{Henry}, we obtain that problem
\eqref{modifpb_interval-h10} is locally well posed in $H^1_0(a,b)$ and therefore
problem \eqref{modifpb_interval} is locally well posed for any initial condition
$u_0 \in H^1(a,b)$ satisfying $u_0(a)=0$, $u_0(b)=1$. Standard regularity
results applied to \eqref{modifpb_interval}  (notice that $\lambda$ is
independent of $x$) show that the solution is classical for $t>0$

If we consider now that $0\leq u_0\leq 1$, then we may argue by comparison
with the constants to show that as long as the solution exists, it will also
satisfy $0\leq u\leq 1$.  Let us argue by contradiction. Assume the solution is
negative at some time $T>0$. Then, for $\eps>0$ small enough there exists a
$0<t_\eps\leq T$ such that $u(x,t)>-\eps$ for all $x\in [a,b]$, $0\leq t<t_\eps$
and
there exists $x_\eps\in (a,b)$ such that $u(x_\eps,t_\eps)=-\eps$. This implies
that $u_t(x_\eps,t_\eps)\leq 0$, $u_{xx}(x_\eps,t_\eps)\geq 0$,
$u_x(x_\eps,t_\eps)=0$ and $f(u(x_\eps,t_\eps))=f(-\eps)>0$. Which is a
contradiction. In a similar way we may proceed with the upper bound $u\leq 1$.
This shows that, as long as the solution exists we have $0\leq u(x,t)\leq 1$.
With standard continuity arguments we show that the solution is globally defined
and satisfy the bounds.
This concludes the proof of the proposition.
\end{proof}

\begin{nota}
Notice that although we have been able to obtain a comparison result with the
constants 0 and 1, we do not have a general comparison argument for equation
\eqref{modifpb_interval}. That is, if the initial conditions are ordered
$u_0\leq v_0$ we cannot conclude that the solutions satisfy the same ordering
for positive times. The lack of these comparison arguments for this equation is
very much related to the lack of maximum principles for the associated linear non local
operators.  This lack is  a
serious drawback, specially when analyzing the stability properties of the equilibrium of
this equation, see Remark \ref{Remark-no-maximum} below.
\end{nota}

\begin{figure}
  \centering
  \includegraphics[width=0.35\textwidth]{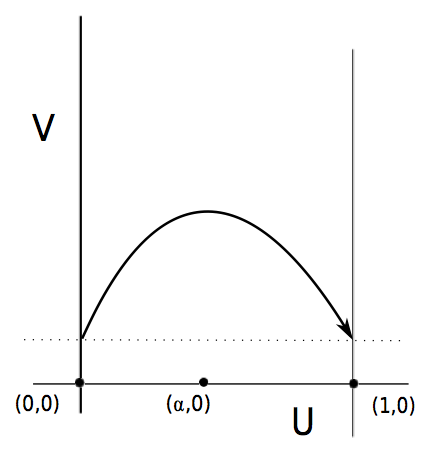}\qquad
  \includegraphics[width=0.5\textwidth]{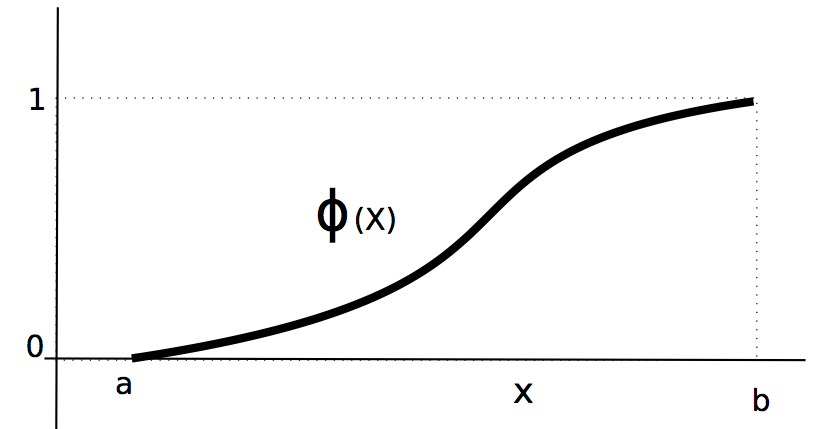}
  \caption{The equilibrium in a bounded domain: (Left) In the phase space $(U,V)$; (Right) As a function of $x$.}
  \label{equilibrium}
\end{figure}

The following result provides a characterization of the stationary solutions of
\eqref{modifpb_interval}.

\begin{lemma}\label{lema:equi_modipb_ab}
A function $\vfi(\cdot)\in H^1(a,b)$ with $\vfi(a)=0$, $\vfi(b)=1$ and $0\leq
\vfi(x)\leq 1$ is an stationary solution of \eqref{modifpb_interval} if and only
if  $\vfi(\cdot)$ is a solution of \eqref{ode_interval} with $c=\la(\vfi')$ and
satisfies
\begin{equation}\label{extrabc}
\vfi'(a) = \vfi'(b).
\end{equation}
\end{lemma}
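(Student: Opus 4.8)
The plan is to recognise the nonlocal stationarity condition as the local boundary value problem \eqref{ode_interval} carrying the self-consistent constant $c=\la(\vfi')$, and to show, through the energy identity produced by testing the equation against $\vfi'$, that this self-consistency is equivalent to \eqref{extrabc}. First I would observe that $\la(\vfi')$ depends on $\vfi$ only through the fixed scalar $\|\vfi'\|_{L^2(a,b)}^2$ and is therefore a real constant once $\vfi$ is prescribed; hence ``$\vfi$ is a stationary solution of \eqref{modifpb_interval}'' means exactly $\vfi''+\la(\vfi')\vfi'+f(\vfi)=0$ on $(a,b)$ with $\vfi(a)=0$, $\vfi(b)=1$, $0\le\vfi\le1$, which is \eqref{ode_interval} for the particular value $c=\la(\vfi')$. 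Any such $\vfi$ is classical by ODE regularity, so $\vfi'(a)$ and $\vfi'(b)$ make pointwise sense, and since $(\vfi,\vfi')$ is then a ``simple solution'', Lemma~\ref{simple-solution-lemma} gives $\vfi'>0$ on $[a,b]$ (cf. Proposition~\ref{lema:equi_cab}). The lemma therefore reduces to showing that, for a solution of \eqref{ode_interval}, the self-consistency $c=\la(\vfi')$ holds if and only if $\vfi'(a)=\vfi'(b)$.

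The crux is the energy identity. Multiplying $\vfi''+c\vfi'+f(\vfi)=0$ by $\vfi'$ and integrating over $(a,b)$ gives
\[
\frac{1}{2}\big(\vfi'(b)^2-\vfi'(a)^2\big)+c\,\|\vfi'\|_{L^2(a,b)}^2+\int_a^b f(\vfi)\,\vfi'\,dx=0,
\]
where the substitution $s=\vfi(x)$ together with $\vfi(a)=0$, $\vfi(b)=1$ turns the last integral into $\int_0^1 f(s)\,ds$. This is the bounded-interval analogue of \eqref{cquotient}--\eqref{cquotient2}; the sole difference with the real line is that the boundary term $\frac{1}{2}(\vfi'(b)^2-\vfi'(a)^2)$ need no longer vanish, whereas on $\bR$ it disappeared thanks to $\Phi'(\pm\infty)=0$. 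The decisive point is that the nonlocal coefficient satisfies $\la(\vfi')\,\|\vfi'\|_{L^2(a,b)}^2=-\int_a^b f(\vfi)\,\vfi'\,dx$ --- this is precisely how the quotient $\la$ arises when \eqref{modifpb} is restricted to $(a,b)$ --- so substituting $c=\la(\vfi')$ makes the bulk terms cancel and collapses the identity to $\vfi'(b)^2=\vfi'(a)^2$. Conversely, imposing $\vfi'(a)=\vfi'(b)$ kills the boundary term and forces $c\,\|\vfi'\|_{L^2(a,b)}^2=-\int_a^b f(\vfi)\vfi'\,dx$, i.e. $c=\la(\vfi')$. Because $\vfi'(a),\vfi'(b)>0$, the equality $\vfi'(b)^2=\vfi'(a)^2$ is the same as $\vfi'(a)=\vfi'(b)$, so both implications follow simultaneously, and combined with the reduction above this is exactly the assertion of the lemma, including \eqref{extrabc}.

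I expect the only genuine obstacle to be the sign bookkeeping needed to see that the value $-\int_a^b f(\vfi)\vfi'\,dx=-\int_0^1 f$ forced by $c=\la(\vfi')$ matches the potential term in the energy identity so as to produce the exact cancellation; once this is arranged, the remaining ingredients --- classicality of $\vfi$ and the strict positivity $\vfi'>0$ used to pass from $\vfi'(b)^2=\vfi'(a)^2$ to \eqref{extrabc} --- are immediate from the properties of solutions of \eqref{ode_interval} established in Lemma~\ref{simple-solution-lemma} and Proposition~\ref{lema:equi_cab}.
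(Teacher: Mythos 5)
Your proposal is correct and follows essentially the same route as the paper: multiply the ODE by $\vfi'$, integrate over $(a,b)$, and use $\int_a^b f(\vfi)\vfi'\,dx=\int_0^1 f(s)\,ds$ together with the definition of $\la$ in \eqref{modifpb_interval} to reduce the identity to $(\vfi'(b))^2=(\vfi'(a))^2$, hence to \eqref{extrabc}. You simply make explicit two points the paper leaves implicit --- the converse implication and the use of $\vfi'>0$ at the endpoints to remove the squares.
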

\begin{proof}
It is clear that any equilibrium solution $\vfi$ of \eqref{modifpb_interval}
satisfies \eqref{ode_interval} for $c=\la(\vfi_x)$. Multiplication by $\vfi'$ in
the ODE in \eqref{ode_interval} and integration along $(a,b)$ yields
$$
\frac 12 [(\vfi'(b))^2-(\vfi'(a))^2] + \la(\vfi_x)\|\vfi'\|_2^2 +
\langle f(\vfi), \vfi' \rangle = 0.
$$
Using that $\langle f(\vfi), \vfi' \rangle=F(1)$ and the definition of $\lambda$
in \eqref{modifpb_interval} we obtain that  $\vfi$ satisfies
\eqref{extrabc}. \hfill \end{proof}

We can proceed now to prove an existence and uniqueness result for stationary
solutions of \eqref{modifpb_interval}.

\begin{theorem}\label{th:nonloc_interval}
There exists one and only one stationary solution $\vfi$ of
\eqref{modifpb_interval} with $0\leq \vfi(x)\leq 1$. Moreover, this solution is
strictly monotone increasing in $x$.
\end{theorem}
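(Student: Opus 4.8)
The plan is to reduce the nonlocal stationary problem to a scalar equation in the parameter $c$ and solve it by monotonicity together with continuity. By Proposition~\ref{lema:equi_cab}, for every $c\in\bR$ there is a unique, strictly increasing $\vfi_c$ solving \eqref{ode_interval}, and by Lemma~\ref{lema:equi_modipb_ab} a function $\vfi$ with $0\le\vfi\le1$ is a stationary solution of \eqref{modifpb_interval} exactly when $\vfi=\vfi_c$ for a value of $c$ with $c=\la(\vfi_c')$ and with $\vfi_c'(a)=\vfi_c'(b)$. Multiplying the equation in \eqref{ode_interval} by $\vfi_c'$ and integrating over $(a,b)$ gives, for every $c$ (this is the computation in the proof of Lemma~\ref{lema:equi_modipb_ab}, now with $c$ kept general, using $\langle f(\vfi_c),\vfi_c'\rangle=F(1)$),
$$\tfrac12\big[(\vfi_c'(b))^2-(\vfi_c'(a))^2\big]+c\,\|\vfi_c'\|_2^2+F(1)=0.$$
Hence the two requirements coincide: $\vfi_c'(a)=\vfi_c'(b)$ holds precisely when $c\,\|\vfi_c'\|_2^2=-F(1)=\la(\vfi_c')\,\|\vfi_c'\|_2^2$, i.e. when $c=\la(\vfi_c')$. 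It therefore suffices to show that
$$D(c):=\vfi_c'(b)-\vfi_c'(a)$$
has exactly one zero.

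I would represent $\vfi_c$ through its orbit in the phase plane \eqref{my-ode-01}. By Lemma~\ref{simple-solution-lemma}, $(U,V)=(\vfi_c,\vfi_c')$ is a simple solution, writable as $V=P_c(U)$ with $P_c>0$ on $[0,1]$, and by Lemma~\ref{basic-ode-lemma}$(i)$ the defining constraint is $\int_0^1 dU/P_c(U)=b-a$. Writing $\theta(c)=P_c(0)=\vfi_c'(a)$ and $\Upsilon(c)=P_c(1)=\vfi_c'(b)$ we have $D(c)=\Upsilon(c)-\theta(c)$. The core of the argument is monotonicity of $\theta$ and $\Upsilon$ in $c$. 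Fix $c_0<c_1$. Since both $P_{c_0},P_{c_1}$ satisfy the same integral constraint, neither can lie strictly above the other throughout $(0,1)$ (that would make $\int_0^1 dU/P_{c_0}$ and $\int_0^1 dU/P_{c_1}$ strictly ordered), so the two orbits meet at some interior $U^*\in(0,1)$; by Lemma~\ref{basic-ode-lemma}$(iii)$, at any such meeting point $P_{c_0}'(U^*)>P_{c_1}'(U^*)$, so a second crossing is impossible. Thus they cross exactly once, with $P_{c_0}<P_{c_1}$ on $[0,U^*)$ and $P_{c_0}>P_{c_1}$ on $(U^*,1]$. Evaluating at the endpoints gives $\theta(c_0)<\theta(c_1)$ and $\Upsilon(c_0)>\Upsilon(c_1)$, so $\theta$ is strictly increasing, $\Upsilon$ strictly decreasing, and $D$ is strictly decreasing in $c$. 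This yields uniqueness of the root.

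For existence I would combine the energy identity with a uniform lower bound on $\|\vfi_c'\|_2^2=\int_0^1 P_c(U)\,dU$. By Cauchy--Schwarz, $1=\left(\int_0^1 dU\right)^2\le \int_0^1 P_c\,dU\cdot\int_0^1 dU/P_c=(b-a)\,\|\vfi_c'\|_2^2$, so $\|\vfi_c'\|_2^2\ge 1/(b-a)$ for all $c$. Rewriting the identity as $\tfrac12\big(\Upsilon(c)^2-\theta(c)^2\big)=-c\,\|\vfi_c'\|_2^2-F(1)$, the right-hand side tends to $-\infty$ as $c\to+\infty$ and to $+\infty$ as $c\to-\infty$; since $\theta,\Upsilon>0$, this forces $D(c)<0$ for $c$ large positive and $D(c)>0$ for $c$ large negative. (Alternatively these two signs can be read directly off Lemma~\ref{basic-ode-lemma}$(vi)$.) Continuous dependence of the solutions of \eqref{my-ode-01} on $c$ and on the shooting value, together with the constraint $\int_0^1 dU/P_c=b-a$, makes $c\mapsto D(c)$ continuous, so by the intermediate value theorem $D$ has a zero $c^{**}$, unique by the monotonicity above.

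Then $\vfi:=\vfi_{c^{**}}$ is the unique stationary solution of \eqref{modifpb_interval} with $0\le\vfi\le1$, and it is strictly monotone increasing by Proposition~\ref{lema:equi_cab}. I expect the main obstacle to be the single-crossing step, that is, proving that the orbits $P_{c_0}$ and $P_{c_1}$ for distinct $c$ intersect exactly once; this is precisely where Lemma~\ref{basic-ode-lemma}$(iii)$ and the common integral constraint $\int_0^1 dU/P_c=b-a$ must be used together, the constraint ruling out disjoint orbits and the transversality at crossings ruling out a second intersection.
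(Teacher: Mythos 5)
Your proposal is correct and follows essentially the same route as the paper: reduce to the one-parameter family $\vfi_c$ of Proposition~\ref{lema:equi_cab}, use the single-crossing property of Lemma~\ref{basic-ode-lemma}$(iii)$ together with the common constraint $\int_0^1 dU/P_c=b-a$ to compare orbits for different $c$, and conclude by continuous dependence and the intermediate value theorem. The only (welcome) refinements are that you make explicit the equivalence, via the energy identity, between $c=\la(\vfi_c')$ and $\vfi_c'(a)=\vfi_c'(b)$, and that you obtain the endpoint signs of $D(c)$ from that identity plus the Cauchy--Schwarz bound $\|\vfi_c'\|_2^2\ge 1/(b-a)$, where the paper instead invokes Lemma~\ref{basic-ode-lemma}$(vi)$.
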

\begin{proof}
From Lemma \ref{lema:equi_modipb_ab}, a stationary solution $\vfi$ of
\eqref{modifpb_interval} with $0\leq \vfi(x)\leq 1$ is the first coordinate of a
``simple solution'' of the ODE
\begin{equation}
\left\{
\begin{array}{l}
U'=V,     \\
V'=-cV-f(U)
\end{array}
\right.
\end{equation}
where $c=\lambda(\vfi')=- \frac{F(1)}{\|\vfi'\|_{L^2(a,b)}^2}\in \bR$.
Lemma \ref{simple-solution-lemma} proves that $\vfi'(x)>0$ for all
$x\in [a,b]$ and therefore $\vfi$ is strictly monotone increasing.

Uniqueness is obtained as follows. Assume that there exist two solutions
$\vfi_1$, $\vfi_2$ and denote by $c_1=\lambda(\vfi_1')$,
$c_2=\lambda(\vfi'_2)$. From the uniqueness of solutions  given in Proposition
\ref{lema:equi_cab}, we get that $c_1\neq c_2$. Then, if we denote by
$(U_1,P(U_1))$, $(U_2,P(U_2))$ the two simple solutions associated to $\vfi_1$
and $\vfi_2$ respectively, from Lemma \ref{basic-ode-lemma} $(iii)$ and from
$P_1(0)=P_1(1)$, $P_2(0)=P_2(1)$ we must have that either $P_1(U)>P_2(U)$ or
$P_1(U)<P_2(U)$ for all $0\leq U\leq 1$. In both cases we have
$$b-a=\int_0^1\frac{dU}{P_1(U)}\neq \int_0^1\frac{dU}{P_2(U)}=b-a$$
which is a contradiction. This shows uniqueness.

Existence is shown as follows. We know from Proposition \ref{lema:equi_cab}
that for every fixed $c_0$ and $r=b-a$ there exists a
unique solution $\vfi_{c_0}$ of \eqref{ode_interval}, which actually is given by
$\vfi_{c_0}=U_0$ where $(U_0,V_0)$ is a ``simple solution'' joining
$(0,\theta_0)$ with $(1,\Upsilon_0)$ for some $\theta_0,\Upsilon_0>0$.  If
it happens that $\theta_0=\Upsilon_0$, that is $\vfi'_{c_0}(a)=\vfi'_{c_0}(b)$,
then Lemma \ref{lema:equi_modipb_ab} shows that this function $\vfi_{c_0}$
is the stationary solution we are looking for.
If $\theta_0\ne \Upsilon_0$, let us assume that $\theta_0<\Upsilon_0$  (the
other case is treated similarly). For $c>c_0$ and the same $r=b-a$, again by
Proposition \ref{lema:equi_cab} we have the existence of a solution $\vfi_c$
which again is given by $\vfi_{c}=U$ where $(U,V)$ is a ``simple solution''
joining $(0, \theta)$ with $(1,\Upsilon)$. But since $r$ is the same for both
solutions and we have $r=\int_0^1\frac{dU}{P_0(U)}=\int_0^1\frac{dU}{P(U)}$ then
necessarily, both solutions must cross at least at some point and by
Lemma \ref{basic-ode-lemma} they can only cross at one point and it must be
satisfied $\theta>\theta_0$, $\Upsilon<\Upsilon_0$. Moreover, from Lemma
\ref{basic-ode-lemma} we can choose $c_1>c_0$ large enough  such that for this
value $c_1$ the unique simple solution joining a point of the form $(0,
\theta_1)$ with $(1, \Upsilon_1)$ in a time $r=b-a$ satisfies
$\theta_1>\Upsilon_1$.  By the continuous dependence of the solutions $\vfi_c$
with respect to the parameter $c$, we will have that there will exist a value
$c^*\in (c_0, c_1)$ such
that the unique solution $\vfi_{c^*}$ travelling for a time $r=b-a$ joins a
point of the form $(0,\theta^*)$ with $(0, \Upsilon^*)$ with
$\theta^*=\Upsilon^*>0$, that is $\vfi'(a)=\vfi'(b)$. This is the desired
solution.  \end{proof}

\subsection{Convergence of the stationary solutions to the travelling wave as
the length of the interval goes to $+\infty$}
\label{subsec-convergence-stationary}
In this section we will pass to the limit as the interval grows to cover the
whole line and we analyze how the solution encountered in Theorem \ref{th:nonloc_interval} behaves as
the length of the interval goes to infinity.  The first step is to prove the convergence of the wave speed to the
one of the travelling wave. More precisely,

\begin{lemma}\label{lema:conv_la}
Let $\la_{r}$ be the unique value given by Theorem~\ref{th:nonloc_interval}
for which an equilibrium of \eqref{modifpb_interval}-\eqref{extrabc} exists on
the interval $(a,b)$ with $r=b-a$. Then,
\begin{equation}\label{converlambda}
|\la_{r} - \la_{\infty}| \to 0, \qquad \mbox{as } \quad r\to +\infty.
\end{equation}
where $\la_{\infty}=c^*$ from Lemma \ref{unstable-manifolds}, that is,  the speed of propagation of the travelling
wave of equation \eqref{originalpb}.
\end{lemma}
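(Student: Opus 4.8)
The plan is to translate \eqref{converlambda} into the phase plane of \eqref{my-ode-01} and to squeeze $\la_r$ between $\cs-\eps$ and $\cs+\eps$ for every fixed $\eps>0$ and all large $r$. For a fixed speed $c$ and a fixed length $r=b-a$, Proposition~\ref{lema:equi_cab} gives a unique simple solution of \eqref{ode_interval}; I denote by $\theta(c,r)=V(a)$ and $\Upsilon(c,r)=V(b)$ its entry and exit slopes and set
$$
g_r(c):=\theta(c,r)-\Upsilon(c,r).
$$
By Lemma~\ref{lema:equi_modipb_ab} an equilibrium of \eqref{modifpb_interval} is exactly a simple solution with $\theta=\Upsilon$, so by Theorem~\ref{th:nonloc_interval} the value $\la_r$ is the unique zero of $g_r$. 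Lemma~\ref{basic-ode-lemma}$(vi)$ gives $g_r(c)>0$ for $c$ large and $g_r(c)<0$ for $-c$ large; since the zero is unique, $g_r$ changes sign exactly once, from negative to positive. Hence it suffices to show, for each $\eps>0$ and all $r$ large, that $g_r(\cs-\eps)<0$ and $g_r(\cs+\eps)>0$, as the sign-change structure then forces $\cs-\eps<\la_r<\cs+\eps$.

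The core of the argument is the behaviour, at a \emph{fixed} $c$, of the transit-time-$r$ solution as $r\to\infty$. First I would note that the transit time $\rho_c(\theta)=\int_0^1 dU/P_{c,\theta}(U)$ of the orbit starting at $(0,\theta)$ is strictly decreasing in $\theta$: two such orbits cannot cross by Lemma~\ref{basic-ode-lemma}$(ii)$, so a larger $\theta$ yields a pointwise larger $P$, hence a smaller integral. Thus $\theta(c,r)=\rho_c^{-1}(r)$ and $r\to\infty$ drives $\theta(c,r)$ to the infimum of admissible initial slopes, where $\rho_c$ blows up; the phase-plane dichotomy of Lemma~\ref{unstable-manifolds} identifies that infimum:
\begin{itemize}
\item On the side of $\cs$ where the unstable orbit from $(0,0)$ overshoots and meets $\{U=1\}$ at $(1,\Upsilon_1(c))$ with $\Upsilon_1(c)>0$, every $\theta>0$ yields a simple solution and $\rho_c(\theta)\to\infty$ as $\theta\to0^+$ (logarithmically slow passage near the saddle $(0,0)$). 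Hence $\theta(c,r)\to0$, $\Upsilon(c,r)\to\Upsilon_1(c)$, and $g_r(c)\to-\Upsilon_1(c)<0$.
\item On the other side, the stable orbit to $(1,0)$ meets $\{U=0\}$ at $(0,\theta_0(c))$ with $\theta_0(c)>0$; admissible slopes satisfy $\theta>\theta_0(c)$ and $\rho_c(\theta)\to\infty$ as $\theta\to\theta_0(c)^+$ (slow passage near the saddle $(1,0)$). Hence $\theta(c,r)\to\theta_0(c)>0$, $\Upsilon(c,r)\to0$, and $g_r(c)\to\theta_0(c)>0$.
\end{itemize}
Comparing the friction term $-cV$ in \eqref{my-ode-01} shows that the overshooting (first) case is $c<\cs$ and the undershooting (second) case is $c>\cs$, consistently with $g_r(+\infty)>0$ from Lemma~\ref{basic-ode-lemma}$(vi)$. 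Therefore, for $r$ large, $g_r(\cs-\eps)<0$ and $g_r(\cs+\eps)>0$, which gives \eqref{converlambda}.

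The main obstacle is precisely this limiting step at fixed $c\neq\cs$: turning ``transit time $\to\infty$'' into ``entry/exit slope $\to$ threshold'' with a \emph{sign-definite} limit of $g_r$. The monotonicity of $\rho_c$ in $\theta$ (from the no-crossing property) is what excludes all spurious mechanisms for making the transit time large --- in particular it rules out an orbit lingering near the interior equilibrium $(\alpha,0)$ --- while continuous dependence near the hyperbolic equilibria $(0,0)$ and $(1,0)$ pins the limiting slopes to $\Upsilon_1(c)$ and $\theta_0(c)$. The delicate point is uniformity: one needs the convergence $g_r(\cs\mp\eps)\to\mp$ to hold after a single threshold $r$, which I would obtain from the continuity of $c\mapsto\theta_0(c)$ and $c\mapsto\Upsilon_1(c)$ near $\cs$, both consequences of the hyperbolicity of the two end equilibria.
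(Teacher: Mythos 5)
Your proof is correct, but it takes a genuinely different route from the paper's. The paper argues by contradiction: if $\la_{r_n}\ge \la_\infty+\veps$ along a sequence $r_n\to\infty$, then by Lemma~\ref{basic-ode-lemma}$(iii)$ and the condition \eqref{extrabc} the equilibrium orbits $V=P_n(U)$ are trapped above the distinguished saddle orbit of Lemma~\ref{unstable-manifolds} for the speed $\la_\infty+\veps$, and above a straight line bounded away from $V=0$; hence the transit times $r_n=\int_0^1 dU/P_n(U)$ stay below some $M(\veps)$, contradicting $r_n\to\infty$. You instead fix the transit time $r$ and vary the speed, bracketing $\la_r$ by an intermediate--value argument for $g_r(c)=\theta(c,r)-\Upsilon(c,r)$, whose signs at the two test speeds $\cs\pm\eps$ are read off, for large $r$, from the limiting entry/exit slopes pinned by the saddle connections of Lemma~\ref{unstable-manifolds}. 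The paper's route is shorter (a single transit--time bound does all the work); yours is more informative (it identifies the limits $\theta(c,r)\to 0$ or $\theta_0(c)$ and $\Upsilon(c,r)\to\Upsilon_1(c)$ or $0$, and it reuses the shooting structure already built for Theorem~\ref{th:nonloc_interval}), at the cost of having to justify these limits, which are standard slow--passage facts near hyperbolic equilibria. Two simplifications are available. First, the appeal to Lemma~\ref{basic-ode-lemma}$(vi)$ for the signs of $g_r$ as $c\to\pm\infty$ is unnecessary, and slightly delicate as stated (one would need the entry slope of the transit--time--$r$ orbit to stay above a fixed $\theta_0$ as $c\to+\infty$): opposite signs at $\cs-\eps$ and $\cs+\eps$, continuity of $g_r$ in $c$, and uniqueness of the zero already force $\la_r\in(\cs-\eps,\cs+\eps)$, whichever sign sits on which side. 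Second, for the same reason your assignment of the overshooting configuration to $c<\cs$ --- which is physically correct and is what the paper itself uses inside its proof, even though it contradicts the literal labelling of cases $(i)$ and $(ii)$ in Lemma~\ref{unstable-manifolds} --- is not actually needed; all that matters is that the two configurations occur on opposite sides of $\cs$. Likewise, the final worry about uniformity in $c$ is moot, since only the two fixed speeds $\cs\pm\eps$ are ever evaluated and one simply takes the larger of the two thresholds in $r$.
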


\begin{proof} 
Observe first that the value of $\lambda_r$ really depends only on $r=b-a$ and not
on $a$ or $b$.

Assume that the result is not true. Then, there is a sequence
$\{r_n\}_{n\in\bN}$ with $r_n \to
\infty$, as $n\to \infty$, and $\veps>0$ so that if we denote by
$\la_n := \la_{r_n}$, then either $\lambda_n>\lambda_\infty+\eps$ or $\lambda_n<\lambda_\infty-\eps$. So let us assume that  $\la_n > \la_{\infty}
+ \veps$, for all $n$, the other case is treated similarly.

Observe that from Lemma \ref{unstable-manifolds}, in the phase plane associated
to the equation \eqref{modifpb_interval} for $c=\la_{\infty} + \veps$ there is
an orbit $(\vfi_{*},\vfi'_{*})$ arriving at $(1,0)$ from
$(0,\Upsilon^*)$, for a certain $\Upsilon^*>0$. This orbit is also represented
as $V=P^*(U)$ for $0\leq U\leq 1$. By $(iii)$ of Lemma~\ref{basic-ode-lemma}
and \eqref{extrabc}, none of the orbits $(\vfi_n,\vfi'_n)$, which
are given by $V=P_n(U)$, can cross $V=P^*(U)$ and it has to be $P_n(0) =
\Upsilon_n > \Upsilon^*$, for all $n$. It follows then that
$P_n(U)>P^*(U)$ for all $0\leq U\leq 1$. Furthermore, the graph of the
function $V=P_n(U)$ is also above the straight line passing through
$(1,\Upsilon^*)$ with slope $(1,-\la_{\infty} + \veps)$. This comes from the
fact that for $\alpha$ in \eqref{hipof_intro} and $\alpha < u < 1$, it is
$f(u)>0$ and the field in the phase plane is proportional to $(1,-\la-f(u)/v)$
with $-\la-f(u)/v < -\la$. It follows that the orbit $V=P_n(U)$ has to arrive at
$(1,\Upsilon_n)$ from
above this line. But then, the time it takes to travel from $(0, \Upsilon_n)$ to
$(1, \Upsilon_n)$  remains bounded, i.e., by $(i)$ of
Lemma~\ref{basic-ode-lemma} it holds
$$
r_n= b_n-a_n = \int_0^1 \frac{du}{P_n(u)} \le M(\veps), \qquad \mbox{for all }
\quad n\in \bN.
$$
This is in contradiction with the fact that $r_n\to +\infty$. \hfill
\end{proof}

\begin{lemma}\label{lema:conv_orbit}
Let $\vfi_{r}$ be the equilibrium obtained in Theorem \ref{th:nonloc_interval}
in the interval $(0,r)$. Then the orbit $(\vfi_{r},\vfi'_{r})$ in the phase
plane converges to the orbit associated to the travelling wave on the whole
line,
$(\vfi_{\infty},\vfi'_{\infty})$ as $r\to \infty$.
\end{lemma}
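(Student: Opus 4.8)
The plan is to translate the assertion into two facts about the ``simple solutions'' $V=P_r(U)$, $U\in[0,1]$, that Lemma~\ref{lema:equi_modipb_ab} and Theorem~\ref{th:nonloc_interval} attach to the equilibria $\vfi_r$: first, that their common endpoint value $\theta_r:=P_r(0)=P_r(1)$ (the two are equal by \eqref{extrabc}) tends to $0$; and second, that the graphs $U\mapsto P_r(U)$ converge uniformly on $[0,1]$ to the graph $U\mapsto P_\infty(U)$ of the heteroclinic orbit of \eqref{my-ode-01} with $c=\cs$ provided by Lemma~\ref{unstable-manifolds}(iii). The decisive simplification is to argue throughout with the phase-plane representation $V=P(U)$, for which $\tfrac{dP}{dU}=-c+f(U)/P(U)$, rather than with the time-parametrized orbit $(U(\xi),V(\xi))$: the limiting heteroclinic traverses $[0,1]$ in infinite ``time'' $\xi$, so any argument resting on continuous dependence over a fixed $\xi$-interval is hopeless, whereas the orbits remain uniformly transverse graphs over $U$. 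I would also record, from Lemma~\ref{basic-ode-lemma}(ii) (non-crossing for fixed $c$) and (iii) (single crossing for different $c$), that the simple solution $P_{c,\theta}$ starting at $(0,\theta)$ is increasing in $\theta$ and decreasing in $c$; this ordering is convenient for the a priori bounds below.

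First I would establish $\theta_r\to0$. By Lemma~\ref{lema:conv_la} we already have $\la_r\to\cs$, and by Lemma~\ref{basic-ode-lemma}(iv) the $\theta_r$ are bounded (large $\theta$ forces the travel time $\int_0^1 dU/P$ to be small, hence cannot equal $r\to+\infty$). Suppose $\theta_r\not\to0$; passing to a subsequence, $\theta_r\to\theta^*>0$ while $\la_r\to\cs$. Since $\theta^*>0$, the starting point $(0,\theta^*)$ is bounded away from the rest point $(0,0)$, so by continuous dependence of \eqref{my-ode-01} on the initial datum and on $c$ the graphs $P_{r}$ converge uniformly on $[0,1]$ to the simple solution of \eqref{my-ode-01} with $c=\cs$ issuing from $(0,\theta^*)$; this limit orbit lies above the heteroclinic, hence is bounded below by a positive constant on $[0,1]$ (its value at $U=1$ cannot vanish, as the only stable orbit to $(1,0)$ for $c=\cs$ is the heteroclinic itself). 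By Lemma~\ref{basic-ode-lemma}(i) it then follows that $r_n=\int_0^1 dU/P_{r_n}$ converges to the finite number $\int_0^1 dU/P_{\cs,\theta^*}$, contradicting $r_n\to+\infty$. Hence $\theta_r\to0$, and with it $\Upsilon_r=\theta_r\to0$.

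Next I would prove convergence of the graphs, anchoring at an interior point, say $U=\tfrac12$. The heart of the matter is to show $P_r(\tfrac12)\to P_\infty(\tfrac12)$. Since $f'(0)<0$ and $f'(1)<0$, the rest points $(0,0)$ and $(1,0)$ are hyperbolic saddles for every $c$ near $\cs$, with unstable and stable directions depending continuously on $c$, and $P_\infty$ is precisely the unstable orbit of $(0,0)$ for $c=\cs$ (Lemma~\ref{unstable-manifolds}(iii)). The point $(0,\theta_r)$, with $\theta_r\to0^+$, has a strictly positive component along the unstable eigenvector and lies off the stable manifold, so its forward orbit leaves a fixed neighborhood of $(0,0)$ while shadowing the unstable manifold; together with $\la_r\to\cs$ this yields $P_r(\tfrac12)\to P_\infty(\tfrac12)$. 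Once this interior value is pinned down, continuous dependence for the scalar equation $\tfrac{dP}{dU}=-c+f(U)/P$ started from $U=\tfrac12$ (where $P$ is bounded below by a positive constant) gives $P_r\to P_\infty$ uniformly on every compact subinterval $[\eta,1-\eta]\subset(0,1)$.

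Finally, to upgrade interior convergence to uniform convergence on all of $[0,1]$ I would use the local saddle structure once more: near $U=0$ every relevant orbit satisfies a bound $P_{c,\theta}(U)\le \theta+(\mu(c)+o(1))U$ with $\mu(c)$ the positive eigenvalue at $(0,0)$, uniformly for $c$ near $\cs$ and $\theta$ small, and symmetrically $P\lesssim \theta+\mu'(1-U)$ near $U=1$; combined with $\theta_r\to0$ these force $P_r$ to be uniformly small near the endpoints, matching $P_\infty$, and thereby close the estimate. The main obstacle is exactly this passage to a heteroclinic connection: as $\theta_r\to0$ the orbits spend unbounded $\xi$-time near the two saddles, so the degeneracy $P_\infty(0)=P_\infty(1)=0$ can be reached only through the hyperbolicity of $(0,0)$ and $(1,0)$ (the shadowing step and the endpoint bounds), never through a continuous-dependence statement on a fixed time interval, as in the phase-plane analysis of \cite{Henry}. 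Everything else --- boundedness of $\theta_r$, the travel-time identity of Lemma~\ref{basic-ode-lemma}(i), the monotonicity in $\theta$ and $c$, and interior continuous dependence --- is routine once the phase-plane viewpoint is adopted.
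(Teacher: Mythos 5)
Your proposal is correct in substance, but it takes a genuinely different and considerably heavier route than the paper. The paper's entire proof is one contradiction argument: if $P_{r_n}(U_0)\to V_0>P_\infty(U_0)+\delta$ for some $U_0\in[0,1]$, then (using $P_r>P_\infty$ and $\la_{r_n}\to\la_\infty$) continuous dependence forces the orbits to converge to the orbit of the $c=\la_\infty$ system through $(U_0,V_0)$; since that orbit lies strictly above the heteroclinic it is a simple solution bounded below by a positive constant, hence has finite travel time $\int_0^1 dU/P$, contradicting $r_n\to+\infty$ via Lemma~\ref{basic-ode-lemma}(i). Your first step ($\theta_r\to 0$) is exactly this argument specialized to $U_0=0$; had you run it at an arbitrary $U_0$, your steps on interior convergence, the saddle/shadowing analysis at $(0,0)$ and $(1,0)$, and the endpoint estimates would all become unnecessary. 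This also shows that your framing of the ``main obstacle'' is a misdiagnosis: you assert that any argument based on continuous dependence over a fixed $\xi$-interval is hopeless because the heteroclinic traverses $[0,1]$ in infinite time, but in the contrapositive the hypothetical limit orbit is \emph{not} the heteroclinic --- it stays a fixed $\delta$ above it at $U_0$, hence is uniformly bounded away from both saddles and has finite traversal time, which is precisely what makes finite-time continuous dependence applicable. What your approach buys in exchange for the extra work is somewhat more quantitative information (the rate at which $\theta_r\to0$ and the explicit linear endpoint bounds governed by the unstable eigenvalue $\mu_+(c)$), and it makes the uniform convergence on all of $[0,1]$ explicit, whereas the paper's statement and proof are only pointwise/compact-interior in nature (which is all that is used later, e.g.\ in Lemma~\ref{lema:conver_H1_vfi}). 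If you keep your route, the shadowing step deserves more care than you give it: you need the inclination-lemma conclusion to hold uniformly in the parameter $c$ near $\cs$, and you should note that the correct branch of the unstable manifold is selected because the component of $(0,\theta_r)$ along the unstable eigenvector $(1,\mu_+)$ is positive.
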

\begin{proof} Observe that the orbit $(\vfi_r,\vfi'_r)$ is a simple solution and it is given
as $V=P_r(U)$, $0\leq U\leq 1$. Moreover, we know that the travelling wave
$(\vfi_{\infty},\vfi'_{\infty})$ is given as the function $V=P_\infty(U)$ for
$0\leq U\leq 1$. We will show that $P_r\to P_\infty$ as $r\to +\infty$.

\begin{figure}
  \centering
  \includegraphics[width=0.60\textwidth]{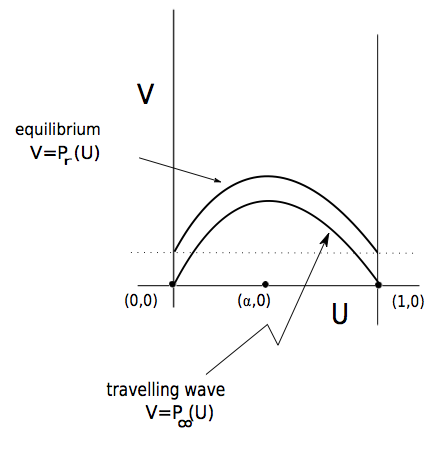}
  \caption{Convergence of the equilibrium to the travelling wave}
  \label{convergence}
\end{figure}

Assume the lemma is not true. Then we will have a sequence of $r_n\to +\infty$
and a $U_0\in [0,1]$ such that $P_{r_n}(U_0)\to V_0>P_\infty(U_0)+\delta$ for
some $\delta >0$. Notice that we have used the fact that $P_r>P_\infty$.
But we know from Lemma~\ref{lema:conv_la} that $\lambda_{r_n}\to
\lambda_\infty$. Hence, by continuous dependence with respect to the initial
conditions and with respect to the parameters appearing in the equation, the
orbit $(\vfi_{r_n}, \vfi_{r_n}')$ converges to the orbit of the ODE with
$\lambda=\lambda_{\infty}$ passing by $(U_0,V_0)$. Since
$V_0>P_\infty(U_0)+\delta$ we have that this orbit takes a finite time to go
from the line $U=0$ to the line $U=1$.  This is a contradiction with the
fact that $r_n\to +\infty$.  \end{proof}

We will normalize the orbit $(\vfi_r,\vfi_r')$ so that the time $\xi=0$ will
correspond to
the unique point for which $\vfi_r(0)=1/2$. Hence, we will denote by $a(r)<0<b(r)$
so that $b(r)-a(r)=r$ and $(\vfi_r(a(r)),\vfi_r'(a(r)))=(0,\theta)$ and
$(\vfi_r(b(r)),\vfi_r'(b(r)))=(1,\theta)$.  In a similar way we may normalize the
travelling wave solution so thar $\vfi_\infty(0)=1/2$.

We have the following

\begin{proposition}\label{a-b-to-infinity}
With the notations above, we have both,
$$
a(r)\to -\infty, \qquad \hbox{ and } \qquad b(r)\to +\infty.
$$
\end{proposition}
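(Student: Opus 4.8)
The plan is to translate the statement into the divergence of two ``transit-time'' integrals and then pass to the limit using the convergence of orbits established in Lemma~\ref{lema:conv_orbit}. First I would use the normalization $\vfi_r(0)=1/2$ together with part $(i)$ of Lemma~\ref{basic-ode-lemma}, applied to the simple solution $V=P_r(U)$ associated to $\vfi_r$. Splitting the transit at the midpoint value $U=1/2$, which by strict monotonicity is attained exactly at $\xi=0$, this gives
\begin{equation*}
-a(r)=\int_0^{1/2}\frac{dU}{P_r(U)},\qquad b(r)=\int_{1/2}^{1}\frac{dU}{P_r(U)},
\end{equation*}
so that the claim $a(r)\to-\infty$, $b(r)\to+\infty$ is equivalent to showing that each of these two integrals tends to $+\infty$ as $r\to+\infty$.

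Next I would invoke Lemma~\ref{lema:conv_orbit}, which yields $P_r\to P_\infty$ as $r\to+\infty$, where $V=P_\infty(U)$ is the orbit of the travelling wave on the whole line. The key structural fact is that this orbit is the heteroclinic connection joining the equilibria $(0,0)$ and $(1,0)$ of \eqref{my-ode-01} with $c=\la_\infty$, so that $P_\infty(0)=P_\infty(1)=0$ and $P_\infty(U)>0$ for $0<U<1$. Because the travelling wave profile $\vfi_\infty$ reaches the value $0$ only as $\xi\to-\infty$ and the value $1$ only as $\xi\to+\infty$, the transit times along $P_\infty$ over each half are infinite; equivalently,
\begin{equation*}
\int_0^{1/2}\frac{dU}{P_\infty(U)}=+\infty,\qquad \int_{1/2}^{1}\frac{dU}{P_\infty(U)}=+\infty.
\end{equation*}
One may also see this directly from the fact that $(0,0)$ and $(1,0)$ are saddle points, since $f'(0),f'(1)<0$, so that $P_\infty$ vanishes linearly at the endpoints and $1/P_\infty$ is non-integrable there.

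Finally I would pass to the limit. Fix any sequence $r_n\to+\infty$. Since $P_{r_n}(U)\to P_\infty(U)>0$ pointwise for $0<U<1$, we have $1/P_{r_n}\to 1/P_\infty$ pointwise on $(0,1/2]$, and these integrands are nonnegative. Applying Fatou's lemma,
\begin{equation*}
\liminf_{n\to\infty}\int_0^{1/2}\frac{dU}{P_{r_n}(U)}\ge\int_0^{1/2}\frac{dU}{P_\infty(U)}=+\infty,
\end{equation*}
so that $-a(r_n)\to+\infty$; the same argument on $[1/2,1)$ gives $b(r_n)\to+\infty$. As the sequence was arbitrary, this establishes $a(r)\to-\infty$ and $b(r)\to+\infty$. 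The main obstacle is precisely the justification of this limiting step: Lemma~\ref{lema:conv_orbit} provides only pointwise convergence of the orbits $P_r$ on the open interval, and near the endpoints $U=0,1$ the integrands degenerate, so no uniform or dominated bound is available; indeed, the non-crossing inequality $P_r\ge P_\infty$ forces $1/P_r\le 1/P_\infty$, which is useless for a lower bound. Fatou's lemma is the right tool here, since it needs only pointwise convergence and nonnegativity and delivers exactly the lower bound required, the precise value of the finite $r_n$-integrals being immaterial once the limiting integral is seen to be $+\infty$.
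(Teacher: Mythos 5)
Your argument is correct, but it takes a different route from the paper. The paper argues by contradiction using continuous dependence in the time variable: if $b(r_n)\to b_0<\infty$ along some sequence, then by convergence of the data at $\xi=0$ (normalization $\vfi_{r_n}(0)=1/2$, $\vfi_{r_n}'(0)\to\vfi_\infty'(0)$ from Lemma~\ref{lema:conv_orbit}) together with $\la_{r_n}\to\la_\infty$ and continuous dependence of ODE solutions on parameters and initial conditions over the finite interval $[0,b_0]$, one gets $(\vfi_{r_n}(b(r_n)),\vfi_{r_n}'(b(r_n)))\to(\vfi_\infty(b_0),\vfi_\infty'(b_0))$, forcing $\vfi_\infty(b_0)=1$ at a finite $b_0$, which contradicts the fact that the travelling wave only attains the value $1$ at $+\infty$. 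You instead work entirely in the phase plane: you convert $-a(r)$ and $b(r)$ into the transit-time integrals $\int_0^{1/2}dU/P_r$ and $\int_{1/2}^1 dU/P_r$ via Lemma~\ref{basic-ode-lemma}$(i)$, observe that the limit orbit $P_\infty$ vanishes at the saddle points $U=0,1$ so that the corresponding integrals diverge, and pass to the limit with Fatou's lemma using only the pointwise convergence $P_r\to P_\infty$ on $(0,1)$ from Lemma~\ref{lema:conv_orbit}. Both proofs rest on the same two ingredients (Lemmas~\ref{lema:conv_la} and \ref{lema:conv_orbit}); the paper's version is shorter and softer, while yours makes the degenerate limiting step fully explicit and correctly identifies why neither the monotone bound $P_r\ge P_\infty$ nor dominated convergence is available, so that Fatou is exactly the right tool. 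Your splitting at $U=1/2$ also cleanly handles the two endpoints symmetrically in a single argument.
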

\begin{proof}
Assume one of them is not true. For instance, let us consider that there exists a sequence
$r_n\to +\infty$ such that $b(r_n)\to b_0<\infty$. This implies that the finite
interval $[0,b(r_n))$ approaches the finite interval $[0,b_0)$ and therefore by
the continuous dependence of the solutions of the ODE with respect to the
parameters and the initial conditions in a finite time interval \cite{Hale}, we
will have that $(\vfi_{r_n}(b(r_n)), \vfi_{r_n}'(b(r_n)))=(1,\Upsilon_n)\to
(\vfi_\infty(b_0), \vfi'_\infty(b_0))$ and this implies
that $\vfi_\infty(b_0)=1$, which is impossible for any $b_0<\infty$, since
$\vfi_\infty$ is the travelling wave solution.

A similar proof shows that $a(r)\to -\infty$. \end{proof}

We may also prove

\begin{lemma}\label{lema:conver_H1_vfi}
With the notations above, if we extend the function $\{\vfi_r\}$ by 0 to the
left of $a(r)$ and by 1 to the right of $b(r)$ (and we still denote this
function by $\vfi_r$) then
$$
\|\vfi_r-\vfi_\infty\|_{W^{1,\infty}(\bR)}+\|\vfi_r'-\vfi_\infty'\|_{L^2(\bR)}
\to 0, \quad \hbox{  as  } r\to \infty.
$$
\end{lemma}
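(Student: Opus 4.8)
The plan is to reduce everything to the phase-plane picture, using that each profile is monotone and represented by $V=P_r(U)$, $V=P_\infty(U)$ on $[0,1]$, and then to split $\bR$ into a fixed large compact piece, where continuous dependence of the $\xi$-ODE applies, and two tails, which I would control by monotonicity and a change of variables. First I would upgrade the convergence of Lemma~\ref{lema:conv_orbit} to uniform convergence on $[0,1]$: since $P_r\ge P_\infty$ there (as used in that proof) and the contradiction argument rules out any point $U_0\in[0,1]$ along which $P_{r_n}(U_0)$ stays a fixed $\delta$ above $P_\infty(U_0)$, one gets $\sup_{[0,1]}|P_r-P_\infty|\to 0$. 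In particular the common boundary slope $\theta=\vfi_r'(a(r))=\vfi_r'(b(r))=P_r(0)=P_r(1)$ tends to $P_\infty(0)=P_\infty(1)=0$, so the jump of the extended derivative $\vfi_r'$ across $a(r)$ and $b(r)$ is asymptotically negligible.

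Next, I would fix $\eta>0$ and choose $L$ so large that $\vfi_\infty(-L)<\eta$, $\vfi_\infty(L)>1-\eta$, $|\vfi_\infty'|<\eta$ on $\{|\xi|>L\}$ and $\int_{|\xi|>L}|\vfi_\infty'|^2\,d\xi<\eta$; this is possible by the exponential decay of $\vfi_\infty'$ at $\pm\infty$ and $\vfi_\infty'\in L^2(\bR)$ recorded in Lemma~\ref{lema43FiMc} and Remark~\ref{remark1}. By Proposition~\ref{a-b-to-infinity}, $a(r)<-L<L<b(r)$ for $r$ large, so on $[-L,L]$ both $(\vfi_r,\vfi_r')$ and $(\vfi_\infty,\vfi_\infty')$ are genuine solutions of $U'=V$, $V'=-\lambda V-f(U)$ with the same normalization $U(0)=1/2$, with parameters $\lambda_r\to\lambda_\infty$ (Lemma~\ref{lema:conv_la}) and initial velocities $P_r(1/2)\to P_\infty(1/2)$ (Step~1). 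Standard continuous dependence (Gronwall on the compact $\xi$-interval) then yields $\vfi_r\to\vfi_\infty$ and $\vfi_r'\to\vfi_\infty'$ uniformly on $[-L,L]$, which controls all three quantities on this interval, the $L^2$ contribution being bounded by $2L\,\|\vfi_r'-\vfi_\infty'\|_{L^\infty[-L,L]}^2$.

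For the two tails I would use monotonicity together with the change of variables $U=\vfi_r(\xi)$, $d\xi=dU/P_r(U)$. On $\xi>L$, monotonicity and the uniform convergence on $[-L,L]$ give $\vfi_r(\xi)\ge\vfi_r(L)>1-2\eta$ for $r$ large, whence $|\vfi_r-\vfi_\infty|\le 2\eta$ there and for $\xi>b(r)$ (where $\vfi_r\equiv1$); the symmetric estimate at $\xi<-L$ uses $\vfi_r(-L)\to\vfi_\infty(-L)<\eta$, giving the $L^\infty$ bound on the profiles. For the derivatives, $\vfi_r'(\xi)=P_r(\vfi_r(\xi))$ with $\vfi_r(\xi)\in[1-2\eta,1]$, so $\vfi_r'(\xi)\le\sup_{[1-2\eta,1]}P_\infty+\sup_{[0,1]}|P_r-P_\infty|$, which is small since $P_\infty$ is continuous with $P_\infty(1)=0$; together with $|\vfi_\infty'|<\eta$ this bounds the $W^{1,\infty}$ tail. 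Finally, for the $L^2$ tail the change of variables gives the clean identity
\begin{equation*}
\int_{L}^{b(r)}|\vfi_r'|^2\,d\xi=\int_{\vfi_r(L)}^{1}P_r(U)\,dU\le (1-\vfi_r(L))\Big(\sup_{[1-2\eta,1]}P_\infty+o(1)\Big),
\end{equation*}
which is $O(\eta)$-small, and symmetrically on the left using $P_\infty(0)=0$; combined with $\int_{|\xi|>L}|\vfi_\infty'|^2<\eta$ this closes the estimate. Letting $r\to\infty$ and then $\eta\to0$ yields all three limits.

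The main obstacle is the mismatch at the endpoints: the finite-interval equilibrium has a strictly positive boundary slope $\theta$ while the travelling wave has vanishing slope at $\pm\infty$, so the extended $\vfi_r'$ is genuinely discontinuous and one cannot invoke continuous dependence on all of $\bR$ at once. The role of Step~1 (that $\theta\to0$) and of the change-of-variables tail estimates is precisely to show that this boundary discrepancy, together with the slowly decaying tails where $P$ is small, contributes negligibly in both the $L^\infty$ and the $L^2$ norms; verifying that the tails are uniformly small in $r$ is the delicate point, which is why I route it through the explicit identity $\int|\vfi_r'|^2\,d\xi=\int P_r\,dU$ rather than through a direct pointwise comparison.
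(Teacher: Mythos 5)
Your proof is correct in substance but takes a genuinely different route from the paper's, most visibly for the $L^2$ part. The paper dispatches the $W^{1,\infty}$ convergence in one line (as a consequence of Lemma~\ref{lema:conv_orbit}) and then handles the $L^2$ tails by a norm-convergence trick: since $\la_r=-F(1)/\|\vfi_r'\|_{L^2(\bR)}^2$ and $\la_r\to\la_\infty$ by Lemma~\ref{lema:conv_la}, the \emph{total} masses $\|\vfi_r'\|_{L^2(\bR)}^2$ converge to $\|\vfi_\infty'\|_{L^2(\bR)}^2$; combined with $L^2$ convergence on a fixed compact $[-T,T]$ this forces $\|\vfi_r'\|_{L^2(\bR\setminus(-T,T))}^2\to\|\vfi_\infty'\|_{L^2(\bR\setminus(-T,T))}^2\le\eps$, and the triangle inequality closes the estimate without ever looking at $P_r$ near the endpoints $U=0,1$. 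You instead estimate the tails directly in the phase plane via $\int_L^{b(r)}|\vfi_r'|^2\,d\xi=\int_{\vfi_r(L)}^1 P_r(U)\,dU$, exploiting that $P_\infty$ vanishes at the endpoints. Your version is more quantitative, and it also supplies explicitly the compact-plus-tails argument (normalization $\vfi_r(0)=1/2$, Gronwall on $[-L,L]$, monotone tails, $\theta\to0$ across the extension points) that the paper's one-line claim for the $W^{1,\infty}$ part leaves implicit; the paper's route buys brevity and robustness by never needing control of $P_r$ where it is small.

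One step you should tighten: in your first paragraph you pass from the pointwise statement of Lemma~\ref{lema:conv_orbit} (for each fixed $U_0$, no subsequence $P_{r_n}(U_0)$ can stay $\delta$ above $P_\infty(U_0)$) to $\sup_{[0,1]}|P_r-P_\infty|\to0$. Pointwise convergence alone does not yield uniformity, and equicontinuity of the $P_r$ is not free near $U=0,1$ where $P_\infty$ vanishes. Two patches are available from material already in the paper: (a) any two curves $P_{r_1},P_{r_2}$ satisfy $P(0)=P(1)$, hence by Lemma~\ref{basic-ode-lemma}(ii)--(iii) they cannot cross and are globally ordered, with the larger traversal time $\int_0^1 dU/P$ corresponding to the smaller curve; thus $r\mapsto P_r$ decreases pointwise to the continuous limit $P_\infty$ and Dini's theorem gives uniformity on $[0,1]$; or (b) simply replace your tail estimate by the paper's norm identity $\la_r=-F(1)/\|\vfi_r'\|_{L^2}^2$, which avoids needing uniformity near the endpoints altogether. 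With either patch your argument is complete.
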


\begin{proof}
The convergence in $W^{1,\infty}(\bR)$ follows directly from Lemma
\ref{lema:conv_orbit}. Moreover, notice that since $\lambda_r\to \lambda_\infty$
and using that $\lambda_r=-F(1)/\|\vfi_r'\|_{L^2(\bR)}^2$
and $\lambda_\infty=-F(1)/\|\vfi_\infty'\|_{L^2(\bR)}^2$ we have that
$\|\vfi_r'\|_{L^2(\bR)}^2\to \|\vfi_\infty'\|_{L^2(\bR)}^2$.

Hence, consider a small enough parameter $\epsilon>0$		 and let us fix a large
enough interval $[-T,T]$ such that $\|\vfi_\infty'\|^2_{L^2(\bR\setminus
(-T,T))}\leq \eps$.
Then, from the convergence of the
orbits given by Lemma~\ref{lema:conv_orbit}, we have that
$\lim_{r\to\infty}\|\vfi'_r-\vfi'_\infty\|_{L^2(-T,T)}^2 = 0$, which implies that $\lim_{r\to\infty}\|\vfi'_r\|_{L^2(-T,T)}^2=\|\vfi'_\infty\|_{L^2(-T,T)}^2$. Hence,
\begin{eqnarray*}
&& \lim_{r\to \infty} \|\vfi'_r\|_{L^2(\bR\setminus(-T,T))}^2 = \lim_{r\to
\infty} \|\vfi'_r\|_{L^2(\bR)}^2-\lim_{r\to\infty}\|\vfi'_r\|_{L^2(-T,T)}^2 \\
&& \qquad \qquad = \|\vfi'_{\infty}\|_{L^2(\bR)}^2-\|\vfi'_{\infty}\|_{L^2(-T,T)}^2=\|\vfi'_{\infty}\|_{L^2(\bR\setminus (-T,T))}^2\leq \epsilon
\end{eqnarray*}
and therefore,
\begin{eqnarray*}
\lim_{r\to \infty} \|\vfi'_r-\vfi'_\infty\|_{L^2(\bR)}^2 &\leq& \lim_{r\to
\infty} \|\vfi'_r-\vfi'_\infty\|_{L^2(-T,T)}^2
\\ && + 2\lim_{r\to\infty}\|\vfi'_r\|_{L^2(\bR\setminus
(-T,T))}^2+2\|\vfi'_\infty\|_{L^2(\bR\setminus (-T,T))}^2\leq 4\epsilon.
\end{eqnarray*}

Since $\eps$ is arbitrarily small, we show the Lemma. \end{proof}

\section{Asymptotic stability of the stationary solutions of the nonlocal problem}
\label{stability-bounded-interval}

We analyze in this section the stability properties of $\vfi_r$, the unique stationary solution of the nonlocal
problem \eqref{modifpb_interval} in  the bounded domain $(a(r),b(r))$. We consider the normalization of this equilibrium explained in the previous subsection, that is $\vfi_r(0)=1/2$ and to simplify the notation we will denote the interval by $(a,b)$ instead of $(a(r),b(r))$, unless it is necessary to specify the dependence of the domain in $r$.

The linearization of \eqref{modifpb_interval} around $\vfi_r$ is given by,
\begin{equation}\label{lineq}
\left\{\begin{array}{l}
 w_t = w_{xx} +  \la(\vfi_r) w_x + f'(\vfi_r) w + \pi_r(w)\vfi'. \qquad x\in
(a,b),
\ t>0, \\
w(a,t) = w(b,t) = 0,
\end{array} \right.
\end{equation}
with $\pi_r$ the linear nonlocal operator
\begin{equation}\label{Pw}
\pi_r (w) = -2\la(\vfi_r) \frac{\langle w_x, \vfi'_r\rangle}{\|\vfi'_r\|_2^2}.
\end{equation}


The equilibrium $\vfi_r$ will be asymptotically stable if the spectrum
of the linear operator $L^r:D(L^r)\subset L^2(a,b)\to L^2(a,b)$ with $D(L^r)=H^2(a,b)\cap H^1_0(a,b)$, given by
\begin{equation}\label{L}
L^rw := w_{xx} +  \la(\vfi_r) w_x + f'(\vfi_r) w + \pi_r(w)\vfi'_r
\end{equation}
is contained in the left half of the complex plane. We recall that, by
Proposition~\ref{propo:stabinter}, this is the case for the local operator
\begin{equation}\label{L0}
L_0^rw := w_{xx} +  \la(\vfi_r) w_x + f'(\vfi_r) w, \qquad w\in H^1_0(a,b).
\end{equation}
Observe that $L^rw=L_0^rw+\pi_r(w)\vfi'_r$ and the operator $w\to \pi_r(w)\vfi'_r$ has 1-dimensional rank and can be expressed as
$$w\to \frac{2\la(\vfi_r)}{\|\vfi'_r\|_2^2} \vfi'_r\int w\vfi''_r$$

This operator is of the form  $w\to A\langle w, B\rangle$ with
$A(\cdot)=\frac{2\la(\vfi_r)}{\|\vfi'_r\|_2^2} \vfi'_r(\cdot)$ and
$B(\cdot)=\vfi''_r(\cdot)=-\lambda(\vfi_r)\vfi'_r(\cdot) - f(\vfi_r(\cdot))$
and it is a bounded operator from $L^2$ to $L^2$ with finite rank. Several
properties of the operator $L^r$ are inherited from the operator $L_0^r$:  both
operators have the same domain, both operators have compact resolvent and
therefore the spectrum is only discrete, formed by eigenvalues with finite
multiplicity. Nevertheless, all the eigenvalues of operator $L_0^r$ are real
(there is a standard change of variables transforming $L_0^r$ to a selfadjoint
operator) but the operator $L^r$ may not have this property. Actually, unless
$A\equiv B$ operator $L^r$ is not selfadjoint.  There are several studies of
the spectrum of operators of the form $w\to L_0^r(w)+A\langle w, B\rangle$  but
none of them guarantee us that for our particular case, the spectrum lies in
the half complex plane with negative real part. Actually, with the known
results in the literature we are not even able to show that the spectrum of $L$
is real.  See \cite{DaDo06a,DaDo06b,Do08,Frei94,Frei99} for results in this
direction.  One important observation is that in the case that the interval is
the complete real line, that is $r=\infty$, then $\vfi'_\infty$ is the
eigenfunction associated to the eigenvalue $0$ for the operator $L_0^\infty $
and therefore the operator $L^\infty=L_0^\infty+\pi_\infty(w)\vfi'_\infty$ has
an special structure that will allow us to show that
$\sigma(L^\infty)=\sigma(L_0^\infty)$ and that $0\in \sigma(L^\infty)$ with
multiplicity 1.  As a matter of fact this will give us an alternative proof of
the asymptotic stability (with asymptotic phase) of the travelling wave
solution of the nonlocal equation in the whole real line (see Theorem
\ref{th:v}). The fact that $\vfi'_r$ is not an eigenfunction of $L_0^r$, for
finite $r$ (as a matter of fact $\vfi'_r$ does not even satisfy homogeneous
Dirichlet boundary conditions) will not permit us to perform a similar argument
in a bounded interval.  Paradoxically, the analysis in the whole real line is
``simpler'' than the analysis in a bounded interval.

Nevertheless we will be able to prove the asymptotic stability of the
stationary solution of the non local problem \eqref{modifpb_interval} for large
enough intervals using a perturbative method. The proof is divided into three
parts. In the first one we prove some properties of the spectrum of the non
local operator \eqref{L} on the finite interval $(a,b)$. We next fully analyze
the spectrum of the limit operator on the whole line $\bR$. Finally, we prove
the convergence of the spectrum of $L^r$ to the spectrum of $L^\infty$ as $r\to
+\infty$.

\subsection{Spectral properties for any finite interval}\label{subsec-spectral-bounded}
The results in this section apply to the stationary solution $\vfi_r$ obtained
in Theorem \ref{th:nonloc_interval} in the finite interval $(a,b)$.

Let us start with a general and rough estimate of the spectrum of $L^r$ but
which is uniform for all $r\geq 1$.

\begin{proposition}\label{uniform-sector}
There exist $\rho_0\in \bR^+$ and $\phi\in (\pi,2\pi)$ such that if we define
the sector $\Sigma_{\rho_0,\phi}=\{ z\in \bC, |Arg(z-\rho_0)|>\phi\}$, then
$\sigma(L^r)\subset \Sigma_{\rho_0,\phi}$ for all $r\geq 1$.
\end{proposition}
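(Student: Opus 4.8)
The plan is to confine the spectrum by controlling the numerical range of $L^r$, with all constants independent of $r$. As already observed in the text, $L^r$ has compact resolvent, so $\sigma(L^r)$ consists solely of eigenvalues of finite multiplicity; it therefore suffices to estimate $\langle L^r w, w\rangle$ on eigenfunctions. Indeed, if $L^r w = \mu w$ with $\|w\|_2=1$, then $\mu = \langle L^r w, w\rangle$, so every $\mu\in\sigma(L^r)$ lies in the numerical range and it is enough to trap that range in a suitable $r$-independent region.

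First I would record uniform coefficient bounds. Since $0\le \vfi_r\le 1$ and $f\in C^1$, one has $\|f'(\vfi_r)\|_\infty\le M_0$ with $M_0$ independent of $r$. For the drift coefficient $\la(\vfi_r)=-F(1)/\|\vfi'_r\|_2^2$, Lemma~\ref{lema:conv_la} gives $\la_r\to\la_\infty$ and Lemma~\ref{lema:conver_H1_vfi} gives $\|\vfi'_r\|_2\to\|\vfi'_\infty\|_2>0$ as $r\to\infty$, so $\la_r$ is bounded for large $r$; continuity of $r\mapsto\la_r$ on a compact range $[1,R_0]$ (continuous dependence of the simple solution on $r$) bounds it there. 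Hence $M:=\sup_{r\ge 1}|\la(\vfi_r)|<\infty$. The core computation is then a splitting of the quadratic form: for $w\in D(L^r)$ with $\|w\|_2=1$, integrating by parts using $w(a)=w(b)=0$,
\[
\langle L^r w, w\rangle = -\|w_x\|_2^2 + \la(\vfi_r)\langle w_x, w\rangle + \langle f'(\vfi_r)w, w\rangle + \pi_r(w)\langle \vfi'_r, w\rangle .
\]
Here $\langle w_{xx},w\rangle=-\|w_x\|_2^2$ is real and negative; $\real\langle w_x,w\rangle=\tfrac12[\,|w|^2\,]_a^b=0$, so the drift term is purely imaginary with $|\la_r\langle w_x,w\rangle|\le M\|w_x\|_2$; the reaction term $\langle f'(\vfi_r)w,w\rangle$ is real with modulus $\le M_0$; and, by Cauchy--Schwarz, the nonlocal term satisfies $|\pi_r(w)\langle\vfi'_r,w\rangle|\le 2|\la_r|\,\|w_x\|_2\le 2M\|w_x\|_2$, the factor $\|\vfi'_r\|_2^{2}$ cancelling. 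Writing $s=\|w_x\|_2$, this yields the two $r$-independent estimates $\real\mu\le -s^2+2Ms+M_0$ and $|\imag\mu|\le 3Ms$.

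Finally, the geometry. The first inequality forces $s\le M+\sqrt{M^2+M_0-\real\mu}$ whenever $\real\mu\le M^2+M_0$, and substitution into the second gives $|\imag\mu|\le 3M\bigl(M+\sqrt{M^2+M_0-\real\mu}\bigr)$. This is precisely the region bounded by a leftward-opening parabola with real vertex at $\rho_0:=M^2+M_0$; as its constants depend only on $M,M_0$, a single such parabola contains $\sigma(L^r)$ for every $r\ge 1$. A leftward parabola is contained in a leftward sector $\Sigma_{\rho_0,\phi}$ with vertex placed slightly to the right of $\rho_0$ and opening angle depending only on $M$, which is the asserted conclusion. The only genuinely non-routine point, and the step I expect to require the most care, is the uniform lower bound $\|\vfi'_r\|_2\ge\delta>0$ for all $r\ge 1$ (equivalently $\sup_r|\la_r|<\infty$), which I would secure by combining the limit $\la_r\to\la_\infty$ from Lemma~\ref{lema:conv_la} and Lemma~\ref{lema:conver_H1_vfi} with continuous dependence on $r$ over compact parameter ranges.
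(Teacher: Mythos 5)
Your proof is correct, but it follows a genuinely different route from the paper's. The paper writes $L^r=\Delta+N_r$ with $N_r(u)=\la(\vfi_r)u_x+f'(\vfi_r)u+\pi_r(u)\vfi'_r$, observes that $\|N_r\|_{\cL(H^1_0,L^2)}\le C_0$ uniformly in $r\ge1$, and then invokes an explicit resolvent estimate for the Dirichlet Laplacian to show that outside a large enough sector any solution of $L^ru=\mu u$ satisfies $\|u\|\le\frac12\|u\|$, hence vanishes --- a perturbation (Neumann-series) argument. You instead reduce to eigenfunctions via the compact resolvent and bound the numerical range directly: the splitting $\langle L^rw,w\rangle=-\|w_x\|_2^2+\la_r\langle w_x,w\rangle+\langle f'(\vfi_r)w,w\rangle+\pi_r(w)\langle\vfi'_r,w\rangle$, the observation that $\real\langle w_x,w\rangle=0$ on $H^1_0$, and the cancellation of $\|\vfi'_r\|_2^2$ in the nonlocal term are all correct, and the resulting estimates $\real\mu\le-s^2+2Ms+M_0$, $|\imag\mu|\le 3Ms$ with $s=\|w_x\|_2$ do trap the spectrum in a leftward-opening parabola, which sits inside a sector of the asserted form. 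Your approach is more elementary (only integration by parts and Cauchy--Schwarz, no resolvent bounds) and in fact yields a sharper, parabolic localization; the paper's argument is marginally more robust in that it does not need to know that $\sigma(L^r)$ is pure point, though that fact is available here. Both proofs rest on exactly the same nontrivial input, namely the uniform bounds $\sup_{r\ge1}|\la_r|<\infty$ (equivalently $\inf_{r\ge1}\|\vfi'_r\|_2>0$) and $\|f'(\vfi_r)\|_\infty\le M_0$, drawn from Lemmas~\ref{lema:conv_la} and~\ref{lema:conver_H1_vfi} together with continuity in $r$ on compact ranges; you are, if anything, more explicit than the paper about where that uniformity is needed.
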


\begin{proof}
Note that $\mu\in \sigma(L^r)$ if and only if there exists $u\in H^2(a,b)\cap
H^1_0(a,b)$ such that $L^r u=\mu u$. But the operator $L^r$ can be written as
$L^r u= \Delta u + N(u)$ where $N:H^1_0(a,b)\to L^2(a,b)$  is defined as
$N_r(u)=\lambda(\vfi_r) u_x+f'(\vfi_r)u+\pi_r(u)\vfi'_r$  and as usual $\Delta
u= u_{xx}$. Observe that from Lemmas \ref{lema:conv_la} and
\ref{lema:conver_H1_vfi} we have that the operator $N_r$ is bounded uniformly
in $r$ for $r\geq 1$, that is, there exists a constant $C_0$ independent of
$r=b-a$ such that $\|N_r u\|_{L^2(a,b)}\leq C_0\|u\|_{H^1_0(a,b)}$, for all
$r\geq 1$.

On the other hand, standard estimates using the spectral decomposition of
$-\Delta$ with Dirichlet boundary conditions in $(a,b)$ show that for $\mu \not
\in \bR^-$
$$
\|(-\Delta +\mu I)^{-1}\|^2_{\mathcal{L}(L^2(a,b), H^1_0(a,b))}\leq
\frac{1}{\hbox{dist}(\mu, \bR^-)}+\frac{|\mu|}{(\mu+|\mu|)^2}.
$$

Hence, fixing  $\phi\in (\pi,2\pi)$  we can choose $\rho_0>0$ large enough so that we have
$$
\|(-\Delta +\mu I)^{-1}\|^2_{\mathcal{L}(L^2(a,b), H^1_0(a,b))}\leq
\frac{1}{(2C_0)^2},\quad \forall \mu\in \bC\setminus \Sigma_{\rho_0,\phi}.
$$
Therefore, if $\mu\in \bC\setminus \Sigma_{\rho_0,\phi}$ and if there exists
$u\in H^2(a,b)\cap H^1_0(a,b)$ such that $L^r(u)=\mu u$, then,  $u=N\circ
(-\Delta +\mu I)^{-1} u$ which implies that $\|u\|_{L^2}\leq
\|N\|_{\mathcal{L}(H^1_0,L^2)}\,  \|(-\Delta +\mu
I)^{-1}\|_{\mathcal{L}(L^2,H^1_0)}\|u\|_{L^2}\leq
C_0\frac{1}{2C_0}\|u\|_{L^2}\leq 1/2\|u\|_{L^2}$ and therefore $u\equiv 0$,
which implies that $\mu\not\in \sigma(L^r)$. \hfill\end{proof}

This rough estimate of the spectrum of $L^r$ allows us to prove that if there
is an eigenvalue of $L^r$ with positive real part, then necessarily we will
have that it is uniformly bounded in $r$, that is

\begin{corollary}\label{bound-spectrum-Lr}
With the notations of the previous proposition, we have that  for any value $\nu>0$, we have
$$
\{ z\in \sigma(L^r), \real z\geq - \nu\}\subset
\{ z\in \bC, -\nu\leq \real z\leq \rho_0, |Im(z)|\leq (\rho_0+\nu)\sin(\phi)\}.
$$
\end{corollary}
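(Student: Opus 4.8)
The plan is to derive this corollary as a purely geometric consequence of Proposition~\ref{uniform-sector}, with no further analysis of the operator $L^r$ itself. Fix $\nu>0$ and take any $z\in\sigma(L^r)$ with $\real z\ge-\nu$. By Proposition~\ref{uniform-sector} we automatically have $z\in\Sigma_{\rho_0,\phi}$, and since the constants $\rho_0$ and $\phi$ furnished there are \emph{independent of $r$}, every bound obtained below holds uniformly for $r\ge1$; thus the uniformity implicit in the statement requires no extra work and is simply inherited from the proposition.

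The key steps are elementary. First, the sector $\Sigma_{\rho_0,\phi}$ has its vertex at the real point $\rho_0$ and opens toward $\real z\to-\infty$, so $\rho_0$ is its rightmost point; hence every $z\in\Sigma_{\rho_0,\phi}$ satisfies $\real z\le\rho_0$, which together with the standing hypothesis gives the two-sided bound $-\nu\le\real z\le\rho_0$. Second, to control the imaginary part I write $z-\rho_0=s\,e^{i\theta}$ with $s\ge0$ and $|\theta|\ge\phi$ (the defining condition of the sector). The constraint $\real z\ge-\nu$ then reads $s\,|\cos\theta|\le\rho_0+\nu$, i.e.\ the horizontal distance of $z$ from the vertex is at most $\rho_0+\nu$. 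Feeding this into $|\imag z|=s\,|\sin\theta|$ and using that the admissible angular range is controlled by the boundary rays $|\theta|=\phi$, one bounds $|\imag z|$ linearly by $\rho_0+\nu$ with a constant set by the aperture $\phi$, giving the stated bound $|\imag(z)|\le(\rho_0+\nu)\sin\phi$. Collecting the three inequalities produces the rectangle in the statement, and intersecting with $\sigma(L^r)$ finishes the proof.

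I do not expect any genuine obstacle here: once Proposition~\ref{uniform-sector} is in hand, the corollary is essentially a one-line reading of the fact that a left-opening sector intersected with the half-plane $\{\real z\ge-\nu\}$ is a bounded triangle. The only point deserving care is the correct identification of the sector's geometry---its vertex at $\rho_0$ and its aperture---so that the trigonometric factor comes out as $\sin\phi$ rather than some other function of $\phi$; in particular I would verify the monotonicity of $|\sin\theta|$ on the admissible set $\{|\theta|\ge\phi\}$, so that the extremal imaginary part is attained on the two bounding rays, which is what pins down the constant $(\rho_0+\nu)\sin\phi$.
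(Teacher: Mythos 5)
Your overall strategy is exactly the intended one: the paper offers no separate argument for Corollary~\ref{bound-spectrum-Lr}, treating it as an immediate geometric consequence of Proposition~\ref{uniform-sector}, and the uniformity in $r$ is indeed inherited for free from the $r$-independence of $\rho_0$ and $\phi$. The real-part bounds $-\nu\le\real z\le\rho_0$ are also correct, granted that the sector opens to the left (which requires reading the aperture parameter as lying in $(\pi/2,\pi)$ rather than the paper's printed $(\pi,2\pi)$).

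The step that produces the imaginary-part bound, however, does not close as you wrote it. Writing $z-\rho_0=s\,e^{i\theta}$ with $|\theta|\ge\phi>\pi/2$, the constraint $\real z\ge-\nu$ gives only $s\,|\cos\theta|\le\rho_0+\nu$, and hence
$$
|\imag z|=s\,|\sin\theta|\le(\rho_0+\nu)\,\frac{|\sin\theta|}{|\cos\theta|}\le(\rho_0+\nu)\,|\tan\phi|,
$$
the last inequality because $|\tan\theta|$ is decreasing on $(\pi/2,\pi]$ and is therefore maximized on the bounding rays $|\theta|=\phi$. To obtain the factor $\sin\phi$ you would need $s\le\rho_0+\nu$, which is not available: along the line $\real z=-\nu$ the distance $s$ to the vertex grows like $1/|\cos\theta|$, and verifying the monotonicity of $|\sin\theta|$ alone, as you propose, does not repair this. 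The extreme points of $\Sigma_{\rho_0,\phi}\cap\{\real z\ge-\nu\}$ are the two corners where the rays meet that vertical line, and there $|\imag z|=(\rho_0+\nu)|\tan\phi|>(\rho_0+\nu)|\sin\phi|$, so the rectangle as literally printed is too small. This is a defect of the statement rather than of your idea (note that with $\phi\in(\pi,2\pi)$ the quantity $\sin\phi$ is even negative, so the printed constant cannot be meant literally). For the only use made of the corollary, at the start of the proof of Theorem~\ref{th:conver_regular}, all that is needed is containment in some disc $\{|z|\le R_0\}$ with $R_0$ independent of $r$, and your argument, with the corrected constant $(\rho_0+\nu)|\tan\phi|$, does deliver that.
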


\begin{lemma}\label{lema:autoL}
Let $\mu \in \sigma(L^r)\bigcap\rho(L_0^r)$. Then, $\mu$ is at most a geometrically simple
eigenvalue of $L^r$, that is, $Ker(L^r-\mu I)$ is one dimensional. Moreover,
the associated eigenspace is generated by $y$, the unique
solution of
\begin{equation}\label{defy}
(L_0^r -\mu I)y = \vfi'_r,
\end{equation}
and
\begin{equation}\label{py-1}
\pi_r(y)= -2\la \frac{\langle y',\vfi'_r \rangle}{\|\vfi'_r\|_2^2}= -1.
\end{equation}
\end{lemma}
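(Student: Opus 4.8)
The plan is to exploit the rank-one structure of the nonlocal perturbation: writing $L^r = L_0^r + \pi_r(\cdot)\,\vfi'_r$, the operator $w\mapsto \pi_r(w)\,\vfi'_r$ has one-dimensional range, and the hypothesis $\mu\in\rho(L_0^r)$ guarantees that $L_0^r-\mu I$ is boundedly invertible from $L^2(a,b)$ onto $D(L_0^r)=H^2(a,b)\cap H^1_0(a,b)$. I would begin from the eigenvalue equation $L^r w=\mu w$ for a nonzero $w\in D(L^r)=D(L_0^r)$ and rewrite it, using that $\pi_r(w)\in\bC$ is a scalar, as $(L_0^r-\mu I)w=-\pi_r(w)\,\vfi'_r$.

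Next, since $\vfi'_r\in L^2(a,b)$, I would set $y:=(L_0^r-\mu I)^{-1}\vfi'_r$, which is exactly the unique solution of \eqref{defy}, is nonzero (because $\vfi'_r\not\equiv 0$ and the resolvent is injective), and lies in $D(L^r)$. Applying the resolvent to the previous identity yields $w=-\pi_r(w)\,y$, so every eigenfunction associated with $\mu$ is a scalar multiple of $y$. This already shows that $\mathrm{Ker}(L^r-\mu I)$ is at most one-dimensional, i.e. $\mu$ is at most geometrically simple, and that the eigenspace is generated by $y$ provided $y$ itself is an eigenfunction. Observe also that $\pi_r(w)\neq 0$, since $\pi_r(w)=0$ would force $w=0$.

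Finally, I would apply the bounded linear functional $\pi_r$ to the identity $w=-\pi_r(w)\,y$, obtaining $\pi_r(w)=-\pi_r(w)\,\pi_r(y)$; dividing by $\pi_r(w)\neq 0$ gives $\pi_r(y)=-1$, which is precisely \eqref{py-1}. It then only remains to confirm that $y$ is genuinely an eigenfunction: from $(L_0^r-\mu I)y=\vfi'_r$ we compute $L^r y=L_0^r y+\pi_r(y)\,\vfi'_r=(\mu y+\vfi'_r)+(-1)\vfi'_r=\mu y$. All the manipulations are legitimate because $y\in H^2(a,b)$, so $y'\in L^2(a,b)$ and $\pi_r(y)$ is well defined, and because $\vfi'_r$ is smooth. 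There is no serious analytic obstacle here; the only point requiring care is the logical bookkeeping: the reduction $w=-\pi_r(w)\,y$ needs the existence of an honest eigenfunction (guaranteed by $\mu\in\sigma(L^r)$) in order to deduce $\pi_r(y)=-1$, and this normalization $\pi_r(y)=-1$ is in turn exactly what certifies that $y$ lies in the eigenspace rather than merely spanning a candidate line.
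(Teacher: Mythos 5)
Your proof is correct and follows essentially the same route as the paper: rewrite the eigenvalue equation as $(L_0^r-\mu I)(w+\pi_r(w)y)=0$, invert $L_0^r-\mu I$ to get $w=-\pi_r(w)y$, and apply $\pi_r$ to extract $\pi_r(y)=-1$. The only difference is that you make explicit two points the paper leaves implicit (that $\pi_r(w)\neq 0$ and that $y$ is indeed an eigenfunction), which is a welcome tightening rather than a departure.
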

\begin{proof}
Let $w\ne 0$ be such that $L^rw=\mu w$. Then,
$$
0  = (L^r -\mu I)w =  (L_0^r -\mu I)w + \pi_r(w)\vfi'_r =(L_0^r -\mu I)(w+\pi_r(w)y),
$$
so that
\begin{equation}\label{wy}
w = -\pi_r(w)y.
\end{equation}
The above implies that $\mu$ is at most a simple eigenvalue of $L^r$ with
eigenspace generated by $y$. Identity \eqref{py-1} follows after applying the
linear operator $\pi_r$ in \eqref{wy}. \hfill \end{proof}
\par\medskip

However, it will be still useful for the last part of our argument.

\begin{proposition}\label{propo:autopositivo}
There is no real eigenvalue $\mu \ge 0$ in $\sigma(L^r)$.
\end{proposition}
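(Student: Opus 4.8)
The plan is to reduce the nonexistence of a real nonnegative eigenvalue to a scalar ``characteristic equation'' and then fix a sign by the maximum principle. Since Corollary~\ref{negative-spectrum} gives $\sigma(L_0^r)\subset\{\real z<0\}$, every $\mu\ge 0$ lies in $\rho(L_0^r)$, so Lemma~\ref{lema:autoL} applies and reduces the membership $\mu\in\sigma(L^r)$ to the scalar condition $\pi_r(y_\mu)=-1$, where $y_\mu$ is the unique solution of $(L_0^r-\mu I)y_\mu=\vfi'_r$ with $y_\mu(a)=y_\mu(b)=0$ (indeed, if $\pi_r(y_\mu)=-1$ then $(L^r-\mu I)y_\mu=\vfi'_r+\pi_r(y_\mu)\vfi'_r=0$, and conversely an eigenfunction forces \eqref{py-1}). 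Thus it suffices to show $\pi_r(y_\mu)\ne -1$ for every $\mu\ge 0$; in fact I will prove the sharper inequality $\pi_r(y_\mu)>-1$.

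First I would derive an exact expression for $\pi_r(y_\mu)$. Writing $c:=\la(\vfi_r)$ and differentiating the profile equation \eqref{ode_interval} gives $\vfi'''_r+c\vfi''_r+f'(\vfi_r)\vfi'_r=0$, so $\vfi'_r$ solves the homogeneous equation associated with $L_0^r$ as a differential expression, although it does \emph{not} satisfy the Dirichlet conditions. Multiplying $(L_0^r-\mu I)y_\mu=\vfi'_r$ by $\vfi'_r$, integrating over $(a,b)$, integrating by parts and using both this homogeneous identity and the vanishing of $y_\mu$ at the endpoints, the interior contributions collapse and one is left, after recognising $2c\langle y_\mu',\vfi'_r\rangle=-\pi_r(y_\mu)\|\vfi'_r\|_2^2$, with
\[
\pi_r(y_\mu)=-1+\frac{\bigl[y_\mu'\,\vfi'_r\bigr]_a^b-\mu\langle y_\mu,\vfi'_r\rangle}{\|\vfi'_r\|_2^2}.
\]
The claim is therefore equivalent to showing that the numerator is strictly positive.

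The sign of this numerator is controlled by the maximum principle. From Corollary~\ref{negative-spectrum} the principal eigenvalue of $L_0^r$ is strictly negative (its spectrum is real, since $L_0^r$ is similar to the self-adjoint operator $B$ used in the proof of Proposition~\ref{propo:stabinter}), so for $\mu\ge 0$ the operator $L_0^r-\mu I$ has a strictly negative principal eigenvalue and its inverse is sign-reversing: from $(L_0^r-\mu I)y_\mu=\vfi'_r>0$ with zero boundary data I obtain $y_\mu<0$ throughout $(a,b)$, and Hopf's lemma yields $y_\mu'(a)<0<y_\mu'(b)$. Since $\vfi_r$ is strictly increasing (Theorem~\ref{th:nonloc_interval}, Lemma~\ref{simple-solution-lemma}), $\vfi'_r>0$ on $[a,b]$, whence $\bigl[y_\mu'\,\vfi'_r\bigr]_a^b=y_\mu'(b)\vfi'_r(b)-y_\mu'(a)\vfi'_r(a)>0$, while $-\mu\langle y_\mu,\vfi'_r\rangle\ge 0$ because $\mu\ge0$ and $y_\mu\vfi'_r<0$. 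Hence the numerator is strictly positive, $\pi_r(y_\mu)>-1$, and $\mu\notin\sigma(L^r)$.

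The step I expect to be the main obstacle is precisely the sign analysis of $y_\mu$ and of its boundary derivatives, because $L_0^r$ is not self-adjoint. I would secure it either via the Berestycki--Nirenberg--Varadhan characterisation (a negative principal eigenvalue makes $-(L_0^r-\mu I)$ inverse-positive) or, more in the spirit of Proposition~\ref{propo:stabinter}, by the conjugation $z=e^{c\xi/2}y_\mu$, which turns the problem into $(B+\mu)z=-e^{c\xi/2}\vfi'_r$ with $B+\mu$ positive definite and positivity preserving; either route delivers $y_\mu<0$ together with the Hopf inequalities. A secondary point requiring care is the bookkeeping of boundary terms in the integration by parts: it is essential that $\vfi'_r$ does not satisfy the Dirichlet conditions, since this is exactly what keeps the term $\bigl[y_\mu'\,\vfi'_r\bigr]_a^b$ from vanishing and makes $\pi_r(y_\mu)$ strictly exceed $-1$, in contrast with the whole-line situation where $\vfi'_\infty$ is a genuine eigenfunction.
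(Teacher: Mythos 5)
Your proposal is correct and follows the same core strategy as the paper: reduce via Lemma~\ref{lema:autoL} to the resolvent function $y_\mu$, pair $(L_0^r-\mu I)y_\mu=\vfi'_r$ with $\vfi'_r$, integrate by parts using the profile equation, and fix the sign of $y_\mu$ by the maximum principle, so that everything hinges on the boundary term $\bigl[y_\mu'\vfi'_r\bigr]_a^b$. The only difference is the endgame: the paper combines the weak inequalities $y_\mu'(a)\le 0\le y_\mu'(b)$ with the identity $\vfi'_r(a)=\vfi'_r(b)$ from Lemma~\ref{lema:equi_modipb_ab} to force $y_\mu'(a)=y_\mu'(b)=0$ and then contradicts uniqueness for the initial value problem, whereas you obtain the strict inequalities $y_\mu'(a)<0<y_\mu'(b)$ directly from Hopf's lemma (legitimate here, since $y_\mu\le 0$ lets one absorb the positive part of the zeroth-order coefficient), which makes the argument marginally cleaner and dispenses with the condition $\vfi'_r(a)=\vfi'_r(b)$ altogether.
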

\begin{proof}
Let us assume that there exists an eigenvalue $\mu \ge 0$ of $L^r$. Since
$\sigma(L_0^r)\subset\{ z\in \bC: \real z < 0\}$ (see Proposition
\ref{propo:stabinter} and Corollary \ref{negative-spectrum}) then $\mu\in
\rho(L_0^r)$.  Let $y$ be as in \eqref{defy} for this value of $\mu$. Then, by
definition, $y$ satisfies
\begin{equation}\label{eqy}
y'' + \la y' + (f'(\vfi_r) - \mu)y  = \vfi'_r.
\end{equation}
Multiplying in \eqref{eqy} by $\vfi'_r$ and integrating in $(a,b)$ we have
$$
\langle y'',\vfi' \rangle + \la \langle y', \vfi'_r \rangle + \langle f'(\vfi_r)y, \vfi'_r \rangle - \mu
\langle y,\vfi'_r\rangle = \|\vfi'_r\|^2.
$$
But
$$
\langle f'(\vfi_r)y, \vfi'_r \rangle = -\langle y', f(\vfi_r) \rangle =
\langle y', \vfi''_r + \la \vfi'_r \rangle,
$$
so that
$$
\langle y'',\vfi_r' \rangle + \langle y',\vfi_r'' \rangle + 2\la \langle y', \vfi_r' \rangle  - \mu
\langle y,\vfi_r'\rangle = \|\vfi_r'\|^2.
$$
Then, by \eqref{py-1}, it holds
\begin{equation}\label{less-or-equal-0}
\langle y'',\vfi_r' \rangle + \langle y', \vfi_r'' \rangle =  \mu
\langle y,\vfi_r'\rangle \le 0,
\end{equation}
where the inequality follows from the maximum principle applied to  $-L^r_0 y +
\mu y=-\vfi'_r$ and taking into account that $\vfi'_r>0$, so that $y<0$ in
$(a,b)$.  But, from Lemma~\ref{lema:equi_modipb_ab}, we know that $\theta :=
\vfi'_r(a)=\vfi'_r(b) > 0$, which implies, together with
\eqref{less-or-equal-0}, that
$$
\langle y'',\vfi'_r \rangle + \langle y', \vfi''_r \rangle =
y'\vfi'_r|_a^b = \theta(y'(b)-y'(a)) \le 0,
$$
and therefore $y'(b) \le y'(a)$. But, on the other hand, the fact that $y<0$ in
$(a,b)$ together with $y=0$ in $x=a,b$, imply that $y'(a)\leq 0\leq y'(b)$.
Therefore $y'(a)=y'(b)=0$. But this is impossible, since if, for instance,
$y'(a)=0$, then $y$ is a solution of the initial value problem $L^r_0
y=\vfi'_r$ in $(a,b)$ with $y(a)=y'(a)=0$ and this implies that
$y''(a)=\vfi'_r(a)>0$, so that with $x$ near $a$ we have $y>0$ which is not
true. \hfill \end{proof}

\begin{nota}\label{Remark-no-maximum}
i) This proposition would be enough to finish the proof of the asymptotic
stability if the non local operator $L^r$ had the property that the eigenvalue with the largest real part
were real. For instance, this could be obtained if $L^r$ satisfies the hypothesis for a Krein-Rutmann type
of theorem.  But for this theorem we need to have maximum principles and are unable to prove this
principles for this nonlocal operator.

ii) Observe that this proposition does not exclude the possibility of having
complex eigenvalues with positive real part. Actually, we will be able to
exclude this possibility only for large enough intervals by using a peturbative
argument. The fact that this operator may present complex eigenvalues with
positive real parts for some intervals $(a,b)$ is an open interesting question.

\end{nota}

\par\medskip
\subsection{Spectrum of the nonlocal problem in the whole line}
\label{subsec-spectral-nonlocal-bounded}
In this section we analyze in detail the spectrum of the corresponding nonlocal
operator in the entire real line. This operator is the one associated to the
linearization around the asymptotic equilibrium $\vfi_\infty$, that is,
\begin{equation}\label{lin_R}
L^\infty (w)= w_{xx} + \lambda(\vfi_\infty) w_x + f'(\vfi_\infty) w + \pi_\infty(w)\vfi_\infty',
\end{equation}
where now $\pi_\infty$ stands for the linear operator
\begin{equation}\label{pwR}
\pi_\infty(w) = \frac{-2\lambda(\vfi_\infty)}{\|U'\|_2^2}\langle U',w_x \rangle.
\end{equation}


We will use several important properties of the spectrum of the local operator
\begin{equation}\label{operatorA}
L_0^\infty w := w'' + \lambda(\vfi_\infty) w' + f'(\vfi_\infty) w,
\end{equation}
We have the following,
\begin{lemma}\label{local-operator-behavior}
With respect to the spectrum of $L_0^\infty$, defined by \eqref{operatorA}, we
have

\par\noindent (i) The essential spectrum  $\sigma_{ess}(L_0^\infty)\subset\{z\in \bC:  \real z\leq \max\{f'(0),f'(1)\}\}$.
\par\noindent (ii) There exists $0<\nu< -\max\{f'(0),f'(1)\}\}$ such that $\sigma(L_0^\infty)\cap\{z\in \bC: \real z \geq -\nu\}=\{ 0\}$ and
the eigenfunction associated to $\mu=0$ is $\vfi'_\infty$.
\par\noindent (iii) There is no solution $w\in H^2(\bR)$ of $L_0^\infty w=\vfi'_\infty$.
Therefore, $0$ is an algebraically simple eigenvalue of $L_0^\infty$, that is,
$Ker( (L_0^\infty)^2)=Ker(L_0^\infty)=$span$\{\vfi'_\infty\}$
\end{lemma}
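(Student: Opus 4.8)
The plan is to identify $L_0^\infty$ with the classical linearisation around the travelling front and then to exploit the positivity of the eigenfunction $\vfi_\infty'$. First I would note that, since $\vfi_\infty$ coincides with the profile $\vfi$ of the travelling wave and, by \eqref{cquotient2} together with Lemma~\ref{lema:conv_la}, $\lambda(\vfi_\infty)=-F(1)/\|\vfi_\infty'\|_2^2=\cs$, the operator $L_0^\infty$ defined in \eqref{operatorA} is exactly the operator $L$ of \eqref{linop}. Thus (i) and (ii) are essentially the spectral facts already recalled in Remark~\ref{nota_exprate} after \cite{Henry}, and I would only reproduce the argument. For (i) the essential spectrum is governed by the constant-coefficient operators obtained by freezing $f'(\vfi_\infty(x))$ at its limits $f'(0)$ and $f'(1)$ as $x\to\mp\infty$; each such operator $w\mapsto w''+\cs w'+d\,w$ has spectrum equal to the parabola $\{-\xi^2+i\cs\xi+d:\xi\in\bR\}$, whose real part never exceeds $d$. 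Taking $d\in\{f'(0),f'(1)\}$ yields $\sigma_{ess}(L_0^\infty)\subset\{z\in\bC:\real z\le\max\{f'(0),f'(1)\}\}$, consistent with \eqref{beta_esspec}.

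For (ii) I would first differentiate the profile equation \eqref{ode} to obtain $L_0^\infty\vfi_\infty'=0$, so that $0$ is an eigenvalue with the strictly positive eigenfunction $\vfi_\infty'>0$. To control the remaining spectrum to the right of the essential spectrum, I would use the similarity transform $w=e^{-\cs x/2}z$, which turns the eigenvalue equation $L_0^\infty w=\mu w$ into the self-adjoint Schr\"odinger form $-z''+\big((\cs)^2/4-f'(\vfi_\infty)\big)z=-\mu z$. For $\mu$ in the region $\real z>-\beta$, with $\beta$ as in \eqref{beta_esspec}, the eigenfunctions decay exponentially, so the conjugation is legitimate and the point spectrum of $L_0^\infty$ there is real. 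Since $z=e^{\cs x/2}\vfi_\infty'>0$ is nodeless, it is the ground state of this self-adjoint operator, hence $\mu=0$ is its largest eigenvalue; consequently every other element of $\sigma(L_0^\infty)$ to the right of $\sigma_{ess}(L_0^\infty)$ is real and strictly negative. Choosing $\nu\in(0,-\max\{f'(0),f'(1)\})$ below the resulting gap gives $\sigma(L_0^\infty)\cap\{\real z\ge-\nu\}=\{0\}$ and $\ker L_0^\infty=\mathrm{span}\{\vfi_\infty'\}$.

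For (iii) I would argue by the Fredholm alternative. Since $0$ is isolated, of finite multiplicity and lies to the right of $\sigma_{ess}(L_0^\infty)$, the operator $L_0^\infty:H^2(\bR)\to L^2(\bR)$ is Fredholm of index $0$, so $\vfi_\infty'\in\mathrm{Range}(L_0^\infty)$ if and only if $\vfi_\infty'$ is orthogonal to $\ker\big((L_0^\infty)^*\big)$, where $(L_0^\infty)^*\psi=\psi''-\cs\psi'+f'(\vfi_\infty)\psi$. A direct substitution shows that $\psi:=e^{\cs x}\vfi_\infty'$ satisfies $(L_0^\infty)^*\psi=0$, and the exponential decay rates of $\vfi_\infty'$ at $\pm\infty$, dictated by the roots of $\mu^2+\cs\mu+f'(0)=0$ and $\mu^2+\cs\mu+f'(1)=0$, show both that $\psi\in L^2(\bR)$ and that the integral $\langle\vfi_\infty',\psi\rangle=\int_\bR e^{\cs x}(\vfi_\infty')^2\,dx$ converges; being the integral of a nonnegative, not identically zero function, it is strictly positive. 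Hence the solvability condition fails and $L_0^\infty w=\vfi_\infty'$ has no solution in $H^2(\bR)$, so there is no Jordan chain of length two over $\vfi_\infty'$. Equivalently $\ker\big((L_0^\infty)^2\big)=\ker(L_0^\infty)=\mathrm{span}\{\vfi_\infty'\}$, which proves that $0$ is algebraically simple.

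The main obstacle I anticipate is the rigorous justification of the similarity transform in (ii): multiplication by $e^{\cs x/2}$ is unbounded on $L^2(\bR)$, so it does not directly conjugate $L_0^\infty$ into a self-adjoint operator on $L^2$, and one must restrict attention to the exponentially decaying eigenfunctions living to the right of the essential spectrum and rule out complex eigenvalues there by hand. A secondary technical point is the exponential-decay bookkeeping in (iii), which must confirm simultaneously that $\psi\in L^2(\bR)$ and that the weighted integral defining $\langle\vfi_\infty',\psi\rangle$ converges; both rest on the precise asymptotics of $\vfi_\infty'$ at $\pm\infty$.
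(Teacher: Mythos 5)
Your proposal is correct, and for part (iii) it takes a genuinely different route from the paper. For (i) and (ii) the paper simply cites \cite[Section 5.4 and Appendix A]{Henry}; you reproduce the standard content of those references (limiting parabolas for the essential spectrum, the conjugation $w=e^{-\cs x/2}z$ to a self-adjoint Schr\"odinger operator whose positive, nodeless ground state $e^{\cs x/2}\vfi_\infty'$ forces $0$ to be the top of the point spectrum), so there is no substantive difference there, and your worry about the unboundedness of the multiplier is exactly the point Henry handles by checking that eigenfunctions to the right of the essential spectrum decay fast enough for the conjugated function to remain in $L^2$. For (iii) the paper argues differently: it first derives the exponential decay of $\vfi_\infty$ and $\vfi_\infty'$ from the phase-plane description of the heteroclinic orbit, then invokes the technical Lemma~\ref{lema:asymp_ode} to show that any putative solution $w$ of $L_0^\infty w=\vfi_\infty'$ (together with $w'$, $w''$) also decays exponentially, so that $v=e^{\lambda x/2}w\in H^2(\bR)$ solves a self-adjoint equation whose right-hand side $\chi_\infty=e^{\lambda x/2}\vfi_\infty'$ is itself a kernel element of the self-adjoint operator --- a contradiction. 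You instead apply the Fredholm alternative directly to $L_0^\infty$ on $H^2(\bR)$, exhibiting $\psi=e^{\cs x}\vfi_\infty'$ in $\ker\big((L_0^\infty)^*\big)$ and checking that $\langle\vfi_\infty',\psi\rangle=\int_\bR e^{\cs x}(\vfi_\infty')^2\,dx$ is finite and strictly positive. The two arguments are dual to each other (your pairing is precisely $\|\chi_\infty\|_2^2$ in the paper's notation), but yours buys economy: it avoids the a priori decay estimates on the unknown $w$ supplied by Lemma~\ref{lema:asymp_ode}, at the cost of invoking the closed-range/Fredholm machinery at an isolated eigenvalue of finite multiplicity lying off the essential spectrum --- which (i) and (ii) legitimately provide. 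Your decay bookkeeping for $\psi$ and for the weighted integral is consistent with the rates $r_1,r_2$ the paper computes, so the solvability obstruction is genuine.
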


We refer to Appendix \ref{proof-lemma} for a proof of this result.

\par\bigskip
Both $L_0^\infty$ and $L^\infty$ are sectorial operators and are related by
\begin{equation}\label{rel_AL}
L^\infty (u) = L_0^\infty (u) + \pi_\infty(u) \cdot \vfi_\infty'
\end{equation}

In the following Proposition we show that $L^\infty$ enjoys the same spectral
properties as $L_0^\infty$.

\begin{proposition}\label{propo:spec_L}
Let $L^\infty$ be the linear operator defined above in \eqref{rel_AL}. Then
$\sigma(L^\infty)=\sigma(L_0^\infty)$ and $0\in \sigma(L^\infty)$ is an
algebraically simple eigenvalue of $L^\infty$. In particular the three items
$(i)$, $(ii)$ and $(iii)$ from Lemma \ref{local-operator-behavior} also hold
for $L^\infty$.
\end{proposition}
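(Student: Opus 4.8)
The plan is to exploit the special rank-one structure of the perturbation together with the single decisive fact that the eigenfunction $\vfi'_\infty$ of $L_0^\infty$ at the eigenvalue $0$ is annihilated by the nonlocal functional $\pi_\infty$. Concretely, set $\psi:=\vfi'_\infty$ and first compute
\begin{equation*}
\pi_\infty(\psi)=\frac{-2\la(\vfi_\infty)}{\|\vfi'_\infty\|_2^2}\,\langle \vfi'_\infty,\vfi''_\infty\rangle
=\frac{-\la(\vfi_\infty)}{\|\vfi'_\infty\|_2^2}\int_\bR \frac{d}{dx}\bigl(\vfi'_\infty\bigr)^2\,dx=0,
\end{equation*}
using $\vfi'_\infty(\pm\infty)=0$ from Lemma~\ref{local-operator-behavior}. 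Because of \eqref{rel_AL} this immediately gives $L^\infty\psi=L_0^\infty\psi+\pi_\infty(\psi)\psi=0$, so $\psi$ is simultaneously an eigenfunction of $L_0^\infty$ and of $L^\infty$ at the eigenvalue $0$; in particular $0\in\sigma(L^\infty)$.

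Next I would establish the equality of spectra by a Sherman--Morrison type resolvent computation applied in both directions, which avoids invoking invariance of the essential spectrum separately. For $\mu\in\rho(L_0^\infty)$ (note $\mu\neq0$ since $0\in\sigma(L_0^\infty)$), solving $(L^\infty-\mu)u=g$ reduces, after applying $\pi_\infty$, to the scalar equation $d(\mu)\,\pi_\infty(u)=\pi_\infty\bigl((L_0^\infty-\mu)^{-1}g\bigr)$ with $d(\mu):=1+\pi_\infty\bigl((L_0^\infty-\mu)^{-1}\psi\bigr)$. Since $L_0^\infty\psi=0$ forces $(L_0^\infty-\mu)^{-1}\psi=-\mu^{-1}\psi$, the vanishing $\pi_\infty(\psi)=0$ yields $d(\mu)\equiv1\neq0$; hence $L^\infty-\mu$ is boundedly invertible and $\rho(L_0^\infty)\subseteq\rho(L^\infty)$. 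Writing symmetrically $L_0^\infty=L^\infty-\pi_\infty(\cdot)\psi$ and repeating the argument, now using $L^\infty\psi=0$ and $(L^\infty-\mu)^{-1}\psi=-\mu^{-1}\psi$ for $\mu\in\rho(L^\infty)$, I get the characteristic quantity $\tilde d(\mu)=1-\pi_\infty\bigl((L^\infty-\mu)^{-1}\psi\bigr)\equiv1$, hence $\rho(L^\infty)\subseteq\rho(L_0^\infty)$. Combining the two inclusions gives $\sigma(L^\infty)=\sigma(L_0^\infty)$.

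Finally I would prove that $0$ is algebraically simple for $L^\infty$, and here is where item $(iii)$ of Lemma~\ref{local-operator-behavior} carries the weight: $\psi\notin\operatorname{Range}(L_0^\infty)$. For geometric simplicity, if $L^\infty w=0$ then $L_0^\infty w=-\pi_\infty(w)\psi$; since $\psi$ is not in the range of $L_0^\infty$ we must have $\pi_\infty(w)=0$, whence $L_0^\infty w=0$ and $w\in\operatorname{span}\{\psi\}$. For algebraic simplicity, take $w\in\operatorname{Ker}\bigl((L^\infty)^2\bigr)$; then $L^\infty w=c\psi$ for some scalar $c$, i.e. $L_0^\infty w=(c-\pi_\infty(w))\psi$, and the same range obstruction forces $c-\pi_\infty(w)=0$, so $w\in\operatorname{span}\{\psi\}$ and then $c=\pi_\infty(w)=0$, giving $w\in\operatorname{Ker}(L^\infty)$. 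The items $(i)$, $(ii)$, $(iii)$ for $L^\infty$ then follow from the spectral equality and this simplicity. I expect the main conceptual point --- rather than any technical obstacle --- to be the observation that $\pi_\infty$ kills its own range direction $\psi$, which is exactly what makes the rank-one perturbation spectrally invisible; the only care needed is to keep the two scalar determinants $d(\mu)$ and $\tilde d(\mu)$ correctly signed and to ensure $\mu\neq0$ throughout, which is automatic because $0$ belongs to both spectra.
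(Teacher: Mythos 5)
Your proposal is correct and follows essentially the same route as the paper: both arguments rest on the two facts that $\pi_\infty(\vfi'_\infty)=0$ (so the rank-one perturbation is ``spectrally invisible'') and that $\vfi'_\infty\notin\mathrm{Range}(L_0^\infty)$ from Lemma~\ref{local-operator-behavior}~$(iii)$ (which gives algebraic simplicity), and your Sherman--Morrison determinant $d(\mu)\equiv 1$ produces exactly the paper's explicit corrector $w^*=w_f+\mu^{-1}\pi_\infty(w_f)\vfi'_\infty$ built from $(L_0^\infty-\mu)^{-1}\vfi'_\infty=-\mu^{-1}\vfi'_\infty$. The only cosmetic difference is that you spell out the $\mathrm{Ker}((L^\infty)^2)=\mathrm{Ker}(L^\infty)$ computation that the paper leaves as ``a very similar argument.''
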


\begin{proof}
By applying integration by parts it is easy to see that
$\pi_\infty(\vfi'_\infty) = 0$. Then, since  $L_0^\infty \vfi'_\infty=0$, we
also have that $L^\infty\vfi'_\infty=0$, so that $0\in \sigma(L^\infty)$ with
associated eigenfunction $\vfi'_\infty$. In order to see that 0 is a simple
eigenvalue of $L^\infty$, let us consider first $\phi \in D(L^\infty)$ with
$L^\infty\phi = 0$. In case $\pi_\infty(\phi) =0$, then it is $L_0^\infty
\phi=0$ and, since 0 is a simple eigenvalue for $L_0^\infty$, it must be $\phi
\sim \phi'_\infty$. Let us assume now that $\pi_\infty(\phi) \ne 0$. Then, it
follows $L_0^\infty \phi = -\pi_\infty(\phi)\vfi'_\infty$, which is impossible
by Lemma \ref{local-operator-behavior} $(iii)$. With a very similar argument it
is possible to show that there is no $w\in H^2$ satisfying $L^\infty
w=\phi'_\infty$.  Hence, 0 is an algebraically simple eigenvalue of $L^\infty$.

We show now that $\rho(L_0^\infty)\subset \rho(L^\infty)$. So, let $\mu \in
\rho(L_0^\infty)$, $f\in X$ and $w_f \in D(L_0^\infty)$ be the unique element
of $D(L_0^\infty)$ such that
\begin{equation}\label{wf}
L_0^\infty w_f - \mu w_f = f.
\end{equation}
If $\pi_\infty(w_f) = 0$, then $L^\infty w_f- \mu w_f = f$. In case
$\pi_\infty(w_f) \ne 0$, we can consider
\begin{equation}\label{w*}
w^* = w_f + \frac{1}{\mu}
\pi_\infty(w_f)\vfi'_\infty,
\end{equation}
since we already know that $\mu \ne 0$. Then, using that $L_0^\infty
\vfi'_\infty= \pi_\infty(\vfi'_\infty)= 0$ and \eqref{rel_AL}, one gets
\begin{equation} \label{L_onto}
L^\infty w^*- \mu w^* = f.
\end{equation}
This proves that $L^\infty-\mu I$ is onto.

Let us assume now that there exist two elements $w_1^*, w_2^* \in D(L^\infty)$
with
$$
L^\infty w_j^* - \mu w_j^* = f, \qquad \mbox{ for }\ j=1,2.
$$
Then
$$
L_0^\infty w_j^* - \mu w_j^* = -\pi _\infty(w_j^*) \vfi'_\infty + f, \qquad \mbox{ for }\ j=1,2.
$$
From the above it is clear that $\pi_\infty (w_1^*)=\pi_\infty (w_2^*)$ implies
$w_1^* = w_2^*$, since $L_0^\infty-\mu I$ is one to one. In case
$\pi_\infty(w_1^*) \ne \pi_\infty(w_2^*)$, we consider $\bw=w_1^*-w_2^*$ and we
get $ L_0^\infty \bw- \mu \bw = -\pi_ \infty(\bw)\vfi'_ \infty, $, which
implies $ \bw = -\pi_ \infty(\bw) (L_0^\infty -\mu I)^{-1} \vfi'_\infty. $ But
$(L_0^\infty-\mu I)^{-1}\vfi'_\infty=-\vfi'_\infty/ \mu$ and therefore $\bw\sim
\vfi'_\infty$ and $\pi_\infty(\bw)=0$, which is a contradiction.

The fact that $(L^\infty-\mu I)^{-1}$ is bounded is clear from the expression
$$
(L^\infty-\mu I)^{-1} f = (L_0^\infty-\mu I)^{-1}f + \frac 1{\mu} \pi_ \infty(w_f) \vfi'_\infty.
$$
which is obtained from \eqref{w*}-\eqref{L_onto}. This shows that
$\rho(L_0^\infty) \subset \rho(L^\infty)$.

The proof of the other inclusion $\rho(L^\infty) \subset \rho(L_0^\infty)$ is
completely symmetrical to this one,  once we know that $0$ is also a simple
eigenvalue of $L^\infty$. We just need to express $L_0^\infty=L-\pi_\infty
\vfi'_\infty$ and remake the proof we have just shown. \hfill \end{proof}

\par\bigskip

\begin{nota}\label{essential-spectrum}
(i) In particular, we have that the spectrum of $L^\infty$ apart from $0$ is
located in the left half plane, i.e., there exists $\nu>0$, such that
\begin{equation}\label{des_spec}
\sup \{ \real \mu : \mu \in \sigma(L^\infty),\ \mu \ne 0 \} = -\nu, \quad
\mbox{ for some } \ \nu >0.
\end{equation}

(ii) Observe also that since the operator $u\to \pi_\infty(u)\vfi'_\infty$ is a
compact operator then
$\sigma_{ess}(L_\infty)=\sigma_{ess}(L_\infty^0)\subset\{z\in \bC: \real z \leq
\max\{f'(0),f'(s)\}\,\}$, see \cite{Henry}.

\end{nota}

\subsection{Spectral convergence and asymptotic stability of the stationary solution}
\label{subsec-spectral-convergence} In this subsection we will end up proving
the asymptotic stability of the stationary solution $\vfi_r$ of the non local
problem. We will obtain this via a convergence of the spectrum of the operator
$L^r$ to $L^\infty$. In  order to prove this spectral convergence we will use
the theory of regular convergence developed in \cite{Vai,Vai77a,Vai79} and the
related results in \cite{BeRott07}. The necessary definitions are given below.

Let $E$ and $F$ denote separable Banach spaces and let $\{E_r\}_{r>0}$ and
$\{F_r\}_{r>0}$ be families of separable Banach spaces. Let $\{p_r\}_{r>0}$,
$p_r \in \cL(E,E_r)$ and $\{q_r\}_{r\in\bN}$, $q_r\in \cL(F,F_r)$ be linear
bounded operators such that
\begin{equation}\label{conver_proye}
\begin{array}{l}
\lim_{r\to \infty} \|p_r e \|_{E_r} \to \|e\|_E, \qquad \mbox{for every }\ e\in E \quad \mbox{and }\\
\lim_{r\to \infty} \|q_r f \|_{F_r} \to \|f\|_F, \qquad \mbox{for every }\ f\in F.
\end{array}
\end{equation}
A family $\{e_r\}_{r>0}$, $e_r \in E_r$, is said to be $\cP$-convergent to
$e\in E$, written $e_r \Pcon e$, if
\begin{equation}\label{defPcon}
\lim_{r\to \infty}\|e_r -p_r e\|_{E_r} = 0.
\end{equation}

A family $\{e_r\}_{r>0}$, $e_r \in E_r$, is said to be $\cP$-compact if every
infinite sequence contains a $\cP$-convergent subsequence. Analogous definitions
apply for $\cQ$-convergence and $\cQ$-compactness.

A family of bounded linear operators $\{A_r\}_{r>0}$, $A_r \in
\cL(E_r,F_r)$, is said to be $\cPQ$-convergent to $A\in \cL(E,F)$, written
$A_r\PQcon A$, as $r\to \infty$, if $e_r\Pcon e$ implies $A_r e_r \Qcon Ae$, as
$r \to \infty$. The $\cPQ$-convergence is said to be regular if for every
bounded sequence $\{e_r\}_{r>0}$, $e_r\in E_r$, such that the sequence $\{A_r
e_r\}_{r>0}$ is $\cQ$-compact, it turns out that  $\{e_r\}_{r>0}$ is
$\cP$-compact.

The relevance of the $\cPQ$ regular convergence is that we obtain the following
result, which is taken from \cite{Vai,Vai77a,Vai79} in a simplified version.

\begin{theorem} \label{eigenvalue-convergence}
Assume we have the family of operators $A(s)=A-sB\in \mathcal{L}(E,F)$ and
$A_r(s)=A_r-sB_r\in \mathcal{L}(E_r,F_r)$ where the parameter $s\in S$, a
bounded subset of the complex plane $\bC$, which satisfy the following
hypotheses:

\par\noindent (i) $A_r(s)$ $\cPQ$-converges regularly to $A(s)$ for all $s\in S$.

\par\noindent (ii) For each $s\in S$ the operators $A_r(s)$ and $A(s)$ are Fredholm with index 0.

\par\noindent (iii) There exists $s'\in S$ such that $Ker(A(s'))=\{0\}$.

\par\noindent (iv) There exists a constant $C=C(S)$ such that $\|A_r(s)\|_{\mathcal{L}(E_r,F_r)}\leq C$ for
all $r\geq 0$.

\par\medskip \noindent Then, if we denote by $W(s_0)$ the ``root subspace"
associated to $A(s_0)$, that is, the linear space  generated by the chain of
vectors $\{e_0, e_1,\ldots, e_k,\ldots\}$ defined as,
$$
(A-s_0B)e_0=0,\quad
(A-s_0B)e_1=B e_0,\ldots \quad (A-s_0B)e_k=Be_{k-1}, \ldots
$$
and if we denote by $W_r(s_0, \delta)$ the hull of all ``root subspaces''
associated to $A_r(s)$ for all $|s-s_0|\leq \delta$, $s\in S$, then we have
that for $\delta>0$ small enough
$$
\hbox{dist}_H(W_r(s_0,\delta), W(s_0))\to 0, \quad \mbox{ as } r\to +\infty,
$$
and therefore there exists a $\delta>0$ small such that
$$
\hbox{dim}(W_r(s_0,\delta))=dim(W(s_0)), \quad \mbox{ as } r\to+\infty.
$$

\end{theorem}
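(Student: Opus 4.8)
The plan is to identify the root subspaces $W(s_0)$ and $W_r(s_0,\delta)$ with the ranges of Riesz spectral projections of the pencils $A(\cdot)=A-sB$ and $A_r(\cdot)=A_r-sB_r$, and to deduce the statement from the discrete convergence of these projections. First I would use analytic Fredholm theory: by (ii) the holomorphic family $s\mapsto A(s)$ consists of Fredholm operators of index $0$ on $S$, and by (iii) $A(s')$ is injective, hence invertible; therefore $\Sigma=\{s\in S:\ A(s)\text{ is not invertible}\}$ is a discrete set. In particular $s_0$ is isolated in $\Sigma$, and I may fix $\delta>0$ so small that the positively oriented circle $\Gamma=\{s:\ |s-s_0|=\delta\}\subset S$ and the punctured disc $0<|s-s_0|\le\delta$ avoid $\Sigma$, so that $A(s)^{-1}$ exists for all $s\in\Gamma$. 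The classical Keldysh--Gohberg description of the algebraic eigenspace of a holomorphic pencil then gives $W(s_0)=\mathrm{Range}\,P(s_0)$ and $W_r(s_0,\delta)=\mathrm{Range}\,P_r(s_0)$, where
$$
P(s_0)=\frac{1}{2\pi i}\oint_{\Gamma}(sB-A)^{-1}B\,ds,\qquad
P_r(s_0)=\frac{1}{2\pi i}\oint_{\Gamma}(sB_r-A_r)^{-1}B_r\,ds,
$$
the range of $P_r(s_0)$ being the sum of the root subspaces of all eigenvalues of $A_r(\cdot)$ enclosed by $\Gamma$.

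The step I expect to be the main obstacle is to show that the invertibility of $A(s)$ on $\Gamma$ transfers to a \emph{uniform} invertibility of $A_r(s)$ along $\Gamma$ for large $r$; this is exactly where regularity of the $\cPQ$-convergence enters. I would argue by contradiction. If uniform invertibility failed there would be $r_k\to\infty$, points $s_{r_k}\in\Gamma$ and vectors $e_{r_k}\in E_{r_k}$ with $\|e_{r_k}\|=1$ and $\|A_{r_k}(s_{r_k})e_{r_k}\|\to 0$. Since $\Gamma$ is compact I may assume $s_{r_k}\to\hat s\in\Gamma$, and because $\sup_r\|B_r\|<\infty$ by (iv) the identity $A_{r_k}(\hat s)e_{r_k}=A_{r_k}(s_{r_k})e_{r_k}+(s_{r_k}-\hat s)B_{r_k}e_{r_k}$ shows $A_{r_k}(\hat s)e_{r_k}\Qcon 0$. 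Now $\{e_{r_k}\}$ is bounded and $\{A_{r_k}(\hat s)e_{r_k}\}$ is $\cQ$-compact, so regularity yields a subsequence with $e_{r_k}\Pcon e$; by \eqref{conver_proye} $\|e\|=1$, whereas $\cPQ$-convergence forces $A(\hat s)e=0$, contradicting the invertibility of $A(\hat s)$. Because each $A_r(s)$ is Fredholm of index $0$, the resulting uniform lower bound $\|A_r(s)e\|\ge c\|e\|$ (for $s\in\Gamma$, $r$ large) upgrades to bijectivity with $\sup_{s\in\Gamma}\|A_r(s)^{-1}\|\le 1/c$.

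With this in hand the remaining steps are routine. The same regularity mechanism shows that for each $s\in\Gamma$ the discrete resolvents converge, namely $g_r\Qcon g$ implies $A_r(s)^{-1}g_r\Pcon A(s)^{-1}g$: writing $x_r=A_r(s)^{-1}g_r$, the sequence is bounded and $A_r(s)x_r=g_r\Qcon g$ is $\cQ$-compact, so every $\cP$-limit $x$ of $\{x_r\}$ satisfies $A(s)x=g$ and hence equals $A(s)^{-1}g$. Using the uniform bound on $\|A_r(s)^{-1}\|$ and the Lipschitz dependence of $s\mapsto A_r(s)$ (again from (iv)), this convergence is uniform on the compact contour $\Gamma$, so the contour integrals converge and $P_r(s_0)$ converges to $P(s_0)$ in the discrete sense. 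Finally, $P(s_0)$ is a projection of finite rank $\dim W(s_0)$ (finiteness being part of the Fredholm/Keldysh--Gohberg theory), and discrete convergence of finite-rank projections yields, for $\delta>0$ and $r$ large, both $\dim W_r(s_0,\delta)=\dim W(s_0)$ and $\mathrm{dist}_H(W_r(s_0,\delta),W(s_0))\to 0$, which is the claim. This is the line of reasoning carried out in detail in \cite{Vai,Vai77a,Vai79}.
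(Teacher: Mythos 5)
The paper offers no proof of this theorem beyond the citation ``See the proof in \cite{Vai,Vai77a,Vai79}'', so there is no internal argument to compare against; your sketch is a correct reconstruction of the standard argument behind that citation (identification of the root subspaces with ranges of Riesz projections of the pencil, plus uniform invertibility of $A_r(s)$ on the contour obtained from the regularity of the $\cPQ$-convergence). You have correctly isolated the one step where regularity is indispensable, namely the contradiction argument yielding $\sup_{s\in\Gamma}\|A_r(s)^{-1}\|<\infty$ for $r$ large.
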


\begin{proof} See the proof in \cite{Vai, Vai77a,Vai79}. \end{proof}

\par\bigskip Let us write our operators in such a way that we can obtain the regular convergence.
Consider the following setting. Let $E=H^1(\bR, \bC^2)$ and $F=L^2(\bR,\bC^2)$,
the spaces in the whole real line. Also, $E_r=H^1(I_r, \bC^2)$ and
$F_r=L^2(I_r, \bC^2)\times \bC^2$, the spaces in the finite interval $I_r$ and
observe that the space $F_r$ has two extra coordinates.

Define the family of linear operators $p_r: E\to E_r$ and $q_r:F\to F_r$ as
$$
p_r\begin{pmatrix} u \\  v	
\end{pmatrix}=\begin{pmatrix} u_{| I_r} \\  v_{| I_r}	
\end{pmatrix}
$$
and
$$
q_r\begin{pmatrix} u \\  v	
\end{pmatrix}=\begin{pmatrix} u_{| I_r} \\  v_{| I_r} \\0 \\0 	
\end{pmatrix}.
$$
Consider the family of operators $A_{\infty}^s, A_{0,\infty}^s, \Pi_\infty:E\to
F$, defined as,
$$
A_{\infty}^s\begin{pmatrix} u \\  v	
\end{pmatrix}=
\begin{pmatrix}
u_x\\ v_x
\end{pmatrix} +  \begin{pmatrix}
0 & -I\\ f'(\vfi_\infty)-s & \lambda_\infty
\end{pmatrix}
\begin{pmatrix} u \\  v	
\end{pmatrix}
+
\begin{pmatrix} 0 \\  \pi_\infty(u)\vfi'_\infty	
\end{pmatrix},
$$

$$
A_{0, \infty}^s\begin{pmatrix} u \\  v	
\end{pmatrix}=
\begin{pmatrix}
u_x\\ v_x
\end{pmatrix} + \begin{pmatrix}
0 & -I\\ f'(\vfi_\infty)-s & \lambda_\infty
\end{pmatrix}
\begin{pmatrix} u \\  v	
\end{pmatrix},
$$
and
$$
\Pi_{\infty}\begin{pmatrix} u \\  v	
\end{pmatrix}=
\begin{pmatrix}
0\\  \pi_\infty(u)\vfi'_\infty
\end{pmatrix}
$$
and observe that $A_{\infty}^s=A_{0,\infty}^s+\Pi_\infty$. Moreover, the
operator $A^s_{0,\infty}$ is a local operator. The operator $A_{\infty}^s$ can
also be decomposed as
\begin{equation}\label{decomposition-Ainfinity}
A_\infty^s=A_\infty^0 -sB_\infty,
\end{equation}
where
\begin{equation}\label{B-infinity}
B_\infty\begin{pmatrix} u \\  v	
\end{pmatrix}=
\begin{pmatrix}
0\\ u
\end{pmatrix}.
\end{equation}

With respect to the operators in a bounded interval, we define, $A^s_{r},
A^s_{0,r}, \Pi_{r}: E_r\to F_r$ as
$$
A_{r}^s\begin{pmatrix} u \\  v	
\end{pmatrix}=
\begin{pmatrix}
u_x\\ v_x \\ 0 \\0
\end{pmatrix} + \begin{pmatrix}
0 & -I\\ f'(\vfi_r)-s & \lambda_r  \\ 0 & 0Ê\\0 & 0
\end{pmatrix}
\begin{pmatrix} u \\  v 	
\end{pmatrix}
+
\begin{pmatrix} 0 \\  \pi_r(u)\vfi'_r \\0 \\0
\end{pmatrix}
+
\begin{pmatrix} 0 \\  0 \\ u(a(r)) \\ u(b(r))
\end{pmatrix},
$$

$$
A_{0,r}^s\begin{pmatrix} u \\  v	
\end{pmatrix}=
\begin{pmatrix}
u_x\\ v_x \\ 0 \\0
\end{pmatrix} +  \begin{pmatrix}
0 & -I\\ f'(\vfi_r)-s & \lambda_r  \\ 0 & 0Ê\\0 & 0
\end{pmatrix}
\begin{pmatrix} u \\  v 	
\end{pmatrix}
+
\begin{pmatrix} 0 \\  0 \\ u(a(r)) \\ u(b(r))
\end{pmatrix}
$$
and
$$
\Pi_{r}\begin{pmatrix} u \\  v	
\end{pmatrix}=
\begin{pmatrix}
0\\  \pi_r(u)\vfi'_r \\0 \\0
\end{pmatrix}
$$

and observe that in a similar way, we have
\begin{equation}\label{decomposition-Ar}
A_r^s=A_r^0 -sB_r,
\end{equation}
with
$$
B_r\begin{pmatrix} u \\  v	
\end{pmatrix}=
\begin{pmatrix}
0\\ u\\ 0\\ 0
\end{pmatrix}.
$$

We have the following,

\begin{proposition}\label{pr:conver_regular}
With the notation above,  for any $s \in \{\real s > -\nu \}$, we have

\par (i) The sequence of operators  $A^s_{0,r}$  $\cPQ$-converges regularly to $A^s_{0,\infty}$ as $r\to \infty$.

\par (ii) The sequence of operators  $A^s_{r}$  $\cPQ$-converges regularly to $A^s_{\infty}$  as $r\to \infty$.

\par (iii)   The family of operators $A^s_\infty$, $A^s_r$ are Fredholm operators of index 0.

\end{proposition}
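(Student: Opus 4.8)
The plan is to prove the three items in the order (i), then deduce (ii) from (i) using the compactness of the nonlocal term, and finally establish (iii). Throughout I write $e_r=(u_r,v_r)\in E_r$ and $e=(u,v)\in E$, and I use freely the convergences already available: by Lemma~\ref{lema:conv_la} and Lemma~\ref{lema:conver_H1_vfi} one has $\la_r\to\la_\infty$, $f'(\vfi_r)\to f'(\vfi_\infty)$ locally uniformly, and $\vfi'_r\to\vfi'_\infty$ in $L^2(\bR)$ (with $\vfi_r$ extended by $0$ and $1$), while $a(r)\to-\infty$ and $b(r)\to+\infty$ by Proposition~\ref{a-b-to-infinity}. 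I also record that for $\real s>-\nu$ the asymptotic coefficient matrices $M_\pm=\bigl(\begin{smallmatrix}0&-1\\ \beta_\pm-s&\la_\infty\end{smallmatrix}\bigr)$, with $\beta_-=f'(0)$ and $\beta_+=f'(1)$, are hyperbolic: an imaginary eigenvalue $\mu=i\tau$ would force $\real(\beta_\pm-s)=\tau^2\ge 0$, i.e. $\real s\le\max\{f'(0),f'(1)\}<-\nu$, a contradiction.

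For the $\cPQ$-convergence in (i), suppose $e_r\Pcon e$, so $\|e_r-p_re\|_{H^1(I_r)}\to 0$. Comparing $A^s_{0,r}e_r$ with $q_r(A^s_{0,\infty}e)$ coordinatewise, the first two components differ by $(f'(\vfi_r)-f'(\vfi_\infty))u_r$, $(\la_r-\la_\infty)v_r$ and $A^s_{0,\infty}(e_r-e)$, all tending to $0$ in $L^2(I_r)$ by the coefficient convergences and $e_r\Pcon e$. The last two components of $A^s_{0,r}e_r$ are $u_r(a(r))$ and $u_r(b(r))$, to be matched against the zeros appended by $q_r$; since the endpoint evaluation $H^1(I_r)\to\bC$ is bounded uniformly for $r\ge 1$ and $u\in H^1(\bR)$ decays at infinity, $|u_r(a(r))|\le|u_r(a(r))-u(a(r))|+|u(a(r))|\to 0$, and likewise at $b(r)$. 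Hence $A^s_{0,r}e_r\Qcon A^s_{0,\infty}e$.

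For the regular convergence in (i), take $\{e_r\}$ bounded in $E_r$ with $\{A^s_{0,r}e_r\}$ $\cQ$-compact; passing to a subsequence, $A^s_{0,r}e_r\Qcon g$. The uniform $H^1$-bound and Rellich's theorem on each fixed $[-T,T]$ give, after a diagonal extraction, weak $H^1_{loc}$ and strong $C_{loc}$ convergence to some $e\in H^1_{loc}(\bR)$, and passing to the limit in the first-order system (its coefficients converge locally uniformly) shows $A^s_{0,\infty}e=g$. The crucial and hardest step is to upgrade this local convergence to genuine $\cP$-convergence, which amounts to controlling the tails of $e_r$ near the moving endpoints and to ensuring $e\in H^1(\bR)$. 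Here I would use the hyperbolicity of $M_\pm$: for $|x|$ large the coefficient matrix of the system is close to $M_\pm$, so the associated exponential dichotomy on the half-lines, together with the Dirichlet data $u_r(a(r)),u_r(b(r))$ (controlled by the extra coordinates of $A^s_{0,r}e_r$) and the $\cQ$-convergence of the forcing, yields a uniform tail estimate $\int_{\{|x|>R\}\cap I_r}(|u_r|^2+|v_r|^2)\,dx\le\epsilon(R)+o_r(1)$ with $\epsilon(R)\to 0$; this forces $e\in H^1(\bR)$ and gives $\|e_r-p_re\|_{H^1(I_r)}\to 0$, i.e. $e_r\Pcon e$. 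Item (ii) then reduces to (i): since $A^s_r=A^s_{0,r}+\Pi_r$ and $A^s_\infty=A^s_{0,\infty}+\Pi_\infty$, the $\cPQ$-convergence follows once $\Pi_re_r\Qcon\Pi_\infty e$ whenever $e_r\Pcon e$, which holds because $\pi_r(u_r)\to\pi_\infty(u)$ (using $\la_r\to\la_\infty$, $(u_r)_x\to u_x$ in $L^2_{loc}$, and $\vfi'_r\to\vfi'_\infty$ in $L^2$) and $\vfi'_r\to\vfi'_\infty$ in $L^2$. For the regular convergence, for bounded $\{e_r\}$ the scalars $\pi_r(u_r)$ are bounded, so along a subsequence $\pi_r(u_r)\to c$ and $\Pi_re_r\Qcon(0,c\,\vfi'_\infty,0,0)$ is $\cQ$-compact; hence if $A^s_re_r$ is $\cQ$-compact so is $A^s_{0,r}e_r=A^s_re_r-\Pi_re_r$, and (i) gives $\cP$-compactness of $\{e_r\}$.

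For (iii), the operator $A^s_{0,\infty}$ is the first-order reformulation of $L_0^\infty-s$; by the hyperbolicity of $M_\pm$ it is Fredholm for every $s$ with $\real s>-\nu$, and since this half-plane is connected and $A^s_{0,\infty}$ is boundedly invertible for $\real s$ large (where $s\in\rho(L_0^\infty)$, by Lemma~\ref{local-operator-behavior} and Proposition~\ref{propo:spec_L}), homotopy invariance of the Fredholm index gives index $0$ throughout; the nonlocal term $\Pi_\infty$ has one-dimensional range and is compact, so $A^s_\infty=A^s_{0,\infty}+\Pi_\infty$ is Fredholm of index $0$. On the bounded interval, $A^s_r$ is a compact perturbation of the operator $T(u,v)=(u_x,v_x,u(a(r)),u(b(r)))$ from $H^1(I_r,\bC^2)$ into $L^2(I_r,\bC^2)\times\bC^2$: the zero-order matrix term and $\Pi_r$ factor through the compact embedding $H^1(I_r)\hookrightarrow L^2(I_r)$. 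A direct computation shows $\ker T=\{(0,v):v\text{ constant}\}$ and $\operatorname{coker}T$ is one-dimensional (the single compatibility constraint $u(b(r))-u(a(r))=\int_{a(r)}^{b(r)}u_x$), so $T$ has index $0$; therefore $A^s_r$ is Fredholm of index $0$. The main obstacle of the whole proposition is the tail control in the regular-convergence step, which is precisely where the position of $s$ relative to the essential spectrum enters, through the exponential dichotomy of the asymptotically constant-coefficient system, and is what makes the abstract machinery of Theorem~\ref{eigenvalue-convergence} applicable in spite of the loss of uniform Rellich compactness as $r\to+\infty$.
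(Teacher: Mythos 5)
Your overall architecture coincides with the paper's: split $A^s_r=A^s_{0,r}+\Pi_r$, establish regular $\cPQ$-convergence for the local part first, absorb the rank-one nonlocal part by compactness, and prove Fredholmness of index $0$ by exhibiting each operator as a compact perturbation of a simple model. Items (ii) and (iii) are essentially complete and correct. In (iii) you take two legitimate variants of the paper's route: for $A^s_\infty$ you combine Fredholmness (from hyperbolicity of $M_\pm(s)$, valid since $\real s>-\nu>\max\{f'(0),f'(1)\}$) with homotopy invariance of the index and invertibility for large $\real s$, whereas the paper freezes the zero-order coefficient to the piecewise-constant potential $V$ and proves the resulting operator $F^s_\infty$ is actually invertible via the explicit stable/unstable eigenvectors and \cite[Lemma 1, p.137]{Henry}; on the bounded interval you keep the boundary evaluations inside the model operator $T$ (kernel and cokernel both one-dimensional), while the paper uses $D_r$ without them (both two-dimensional). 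Both index computations give $0$ and are correct; keeping the evaluations in the principal part as you do even avoids having to argue their compactness.

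The one place where your proposal is a programme rather than a proof is exactly the step you yourself flag as the crux: the regular convergence of $A^s_{0,r}$ to $A^s_{0,\infty}$. Your local compactness and diagonal extraction produce a candidate limit satisfying the limit equation on compact sets, but the uniform tail estimate near the moving endpoints --- which is what prevents mass from escaping to $\pm\infty$ and is where the exponential dichotomy of the asymptotically hyperbolic first-order system genuinely enters --- is asserted (``Here I would use\dots yields a uniform tail estimate'') rather than derived. The paper does not derive it either; instead it first replaces $f'(\vfi_r)$ and $\la_r$ by their limits, introducing the auxiliary operator $\tilde A^s_{0,r}$ and showing the discrepancy $B_r$ tends to $0$ in operator norm by Lemmas \ref{lema:conv_la} and \ref{lema:conver_H1_vfi}, which reduces the claim to the restriction of a fixed whole-line operator to growing intervals, and then quotes \cite{BeRott07} (see also \cite{BeLo99}) for precisely that statement, warning explicitly that it ``is not trivial at all.'' If you want a self-contained argument you must carry out your dichotomy-based tail estimate in full; otherwise the coefficient-freezing device is the cleanest way to place yourself in the exact setting of the cited reference.
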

\par\medskip

\begin{proof}
$(i)$ Let us define the auxiliary operator $\tilde A_{0,r}^s:E_r\to F_r$ which
is given by,
$$
\tilde A_{0,r}^s\begin{pmatrix} u \\  v	
\end{pmatrix}=
\begin{pmatrix}
u_x\\ v_x \\ 0 \\0
\end{pmatrix} + \begin{pmatrix}
0 & -I\\ f'(\vfi_\infty)-s & \lambda_\infty \\ 0 & 0Ê\\0 & 0
\end{pmatrix}
\begin{pmatrix} u \\  v 	
\end{pmatrix}
+
\begin{pmatrix} 0 \\  0 \\ u(a(r)) \\ u(b(r))
\end{pmatrix}
$$
where we consider that $f'(\vfi_\infty)$ is restricted to the interval $I_r$.   Notice that
$A_{0,r}^s=\tilde A_{0,r}^s+ B_r$ where
$$
B_{r}\begin{pmatrix} u \\  v	
\end{pmatrix}=\begin{pmatrix}
0 & 0\\ -f'(\vfi_\infty)+f'(\vfi_r) & -\lambda_\infty+\lambda_r \\ 0 & 0Ê\\0 & 0
\end{pmatrix}
\begin{pmatrix} u \\  v 	
\end{pmatrix}
$$
and observe that $\|B_r\|_{\mathcal{L}(E_r,F_r)}\to 0$ as $r\to +\infty$, since
$\lambda_r\to \lambda_\infty$ and $\|f'(\vfi_\infty)-f'(\vfi_r)\|_{L^\infty}\to
0$, see Lemmas~\ref{lema:conv_la} and \ref{lema:conver_H1_vfi}.

But from \cite{BeRott07} we know that $\tilde A_{0,r}^s$ $\cPQ$-converges
regularly to $A_{0,\infty}^s$. This convergence is not trivial at all and it
uses deep techniques like exponential dichotomy. Also, it is worthwile to
mention that this result is implicit in the work of Beyn and Lorenz
\cite{BeLo99}.

The fact now that $\|B_r\|_{\mathcal{L}(E_r,F_r)}\to 0$ implies easily the
result.

\par\noindent $(ii)$ Once we have obtained the convergence for the ``local''
operators and recalling that $A_r^s=A_{0,r}^s+\Pi_r$,
$A_\infty^s=A_{0,\infty}^s+\Pi_\infty$ we obtain the $\cPQ$ regular convergence
from the $\cPQ$ regular convergence of $A_{0,r}^s$ to $A_{0,\infty}^s$, the
$\cPQ$ convergence of $\Pi_r$ to $\Pi_\infty$ and the fact that both $\Pi_r$
and $\Pi_\infty$ are operators with a 1-dimensional rank.

\par\medskip\noindent $(iii)$ Let us divide the proof in two parts.
\par $(iii$-1) {\bf $A_r^s$ is Fredholm with index 0}. Operator $A_r^s$ is defined
in the finite interval $I_r$ where we have the compact embedding
$H^1(I_r,\bC)\hookrightarrow L^2(I_r,\bC)$. This implies in particular that the
operator
$$
\begin{pmatrix} u \\  v	
\end{pmatrix}
\longrightarrow A_{r}^s\begin{pmatrix} u \\  v	
\end{pmatrix}
-\begin{pmatrix}
u_x\\ v_x \\ 0 \\0
\end{pmatrix}=
   \begin{pmatrix}
0 & -I\\ f'(\vfi_r)-s & \lambda_r \\ 0 & 0Ê\\0 & 0
\end{pmatrix}
\begin{pmatrix} u \\  v 	
\end{pmatrix}
+
\begin{pmatrix} 0 \\  0 \\ u(a(r)) \\ u(b(r))
\end{pmatrix}
$$
is a compact operator from $E_r$ to $F_r$, since it is a bounded operator from
$L^2(I_r,\bC)\times L^2(I_r,\bC)$ to $F_r$.

Hence, the operator $A^s_r:E_r\to F_r$ is a Fredholm operator of index 0 if and
only if the bounded operator  $D_r:E_r\to F_r$, given by
$$D_r\begin{pmatrix} u \\  v	
\end{pmatrix}
= \begin{pmatrix}
u_x\\ v_x \\ 0 \\0
\end{pmatrix}
$$
is a Fredholm operator of index 0. But this is very easy to show, since
$Ker(D_r)=\{ (u,v)\in E_r:  u=constant, v=constant\}\equiv \bC\times\bC$ and
therefore dim$(Ker(D_r))=2$. Moreover, the rank of $D_r$ is $L^2(I_r)\times
L^2(I_r)\times \{0\} \times\{ 0\}\subset F_r$ which has codimension 2.

\par $(iii$-2) {\bf  $A_\infty^s$ is Fredholm with index 0}. Observe that the operator
$A_\infty^s$ can be decomposed as $A^s_\infty=F^s_\infty+K_\infty+\Pi_\infty$
where $\Pi_\infty$ is given as above (and is a compact operator since it has
rank=1), and the other two operators are given as
$$
K_\infty \begin{pmatrix} u\\ v \end{pmatrix}=\begin{pmatrix} 0\\
[f'(\vfi_\infty)-V(\cdot)]u \end{pmatrix},
$$
and
$$
F_{\infty}^s\begin{pmatrix} u \\  v	
\end{pmatrix}=
\begin{pmatrix}
u_x\\ v_x
\end{pmatrix} + \begin{pmatrix}
0 & -I\\ V(\cdot)-s & \lambda_\infty
\end{pmatrix}
\begin{pmatrix} u \\  v	
\end{pmatrix},
$$
where the potential $V(x)$ is piecewise constant and it is defined as
$$
V(x)=
\left\{\begin{array}{l}
f'(0), \quad x\in (-\infty,0], \\
f'(1), \quad x\in (0,\infty).
\end{array}
\right.
$$

But the fact that $\vfi_\infty(x)\to  0$ as $x\to -\infty$ and
$\vfi_\infty(x)\to 1$ as $x\to +\infty$, implies that
$f'(\vfi_\infty(x))-V(x)\to 0$ as $x\to \pm\infty$ and therefore, the operator
$K_\infty:E_\infty\to F_\infty$ is a compact operator. Hence, $A^s_\infty$ is a
Fredholm operator of index 0 if and only if $F^s_\infty$ is a Fredholm operator
of index 0.

The operator  $F^s_\infty$ is written as
$$
F^s_\infty\begin{pmatrix}
u\\ v
\end{pmatrix}
=\begin{pmatrix}
u_x\\ v_x
\end{pmatrix} + M(x,s) \begin{pmatrix} u \\  v	
\end{pmatrix},
$$
with $M(x,s)$ the piecewise constant matrix function,
$$
M(x,s)=M_-(s)=\begin{pmatrix}
0 & -I\\ f'(0)-s & \lambda_\infty
\end{pmatrix} \qquad x<0,
$$
$$
M(x,s)=M_+(s)=\begin{pmatrix}
0 & -I\\ f'(1)-s & \lambda_\infty
\end{pmatrix} \qquad x>0,
$$
and recall that both $f'(0),f'(1)<0$.

To show that $F^s_\infty$ is Fredholm with index 0, we will show that
$Ker(F^s_\infty)=\{0\}$ and $R(F^s_\infty)=L^2(\bR,\bC^2)$. The fact that
$Ker(F^s_\infty)=\{0\}$ is proved as follows.   Let $(u,v)\in  H^1(R,\bC^2)$
such that
$$F^s_\infty\begin{pmatrix}
u\\ v
\end{pmatrix}
=\begin{pmatrix}
0\\ 0
\end{pmatrix}.
$$
Then, if we consider this equation in $x<0$  (reps. $x>0$), it is a linear
$2\times 2$ ODE with constant coefficient whose solution can be obtained
explicitly. Since $(u,v)\in H^1(\bR,\bC)$ we have that $(u,v)$ is a bounded
function as $|x|\to \infty$ and therefore, necessarily the behavior of the
solution as $x\to -\infty$ (resp. $x\to+\infty$) is completely determined by
the spectral decomposition of the matrix $M_-(s)$ (resp. $M_+(s)$).

Direct computations show that both matrices $M_-(s)$  and $M_+(s)$ are
hyperbolic matrices (no eigenvalues with 0 real part), each of them has one
eigenvalue with positive real part and the other with negative real part. If we
denote by $ \alpha_p(s)$ the eigenvalue with positive real part of $M_-(s)$
which has $\begin{pmatrix} 1\\ \alpha_p(s) \end{pmatrix}$ as its associated
eigenvector  (unstable manifold of 0 of $M_ -(s)$) and by $\omega_n(s)$
 the eigenvalue with negative real part of $M_ +(s)$ which has $\begin{pmatrix}
1\\ \omega_n(s) \end{pmatrix}$ as its associated eigenvector (stable manifold
of 0 of $M_+(s)$) then we necessarily have that
$$
\begin{pmatrix}
u(x)\\ v(x)
\end{pmatrix}
=c_p e^{ \alpha_p x} \begin{pmatrix}
1\\ \alpha_p(s) \end{pmatrix}, \hbox{  for } x<0, \quad
\begin{pmatrix}
u(x)\\ v(x)
\end{pmatrix}
=c_ne^{ \omega_n x} \begin{pmatrix}
1\\ \omega_n(s) \end{pmatrix},\hbox{  for } x>0,
$$
for some constants $c_p,c_n\in \bC$.
But since $(u,v)\in H^1(\bR,\bC)$, we necessarily mut have
$$
c_p\begin{pmatrix}
1\\ \alpha_p(s) \end{pmatrix}=c_n\begin{pmatrix} 1\\ \omega_n(s)
\end{pmatrix}
$$
and this is impossible unless $c_p=c_n=0$ since
$\real \alpha_p(s)>0$ and $\real \omega_n(s) <0$ and therefore both vectors are
linearly independent. This shows that $(u,v)=(0,0)$ and therefore,
$Ker(F^s_\infty)=\{0\}$.

To show that $R(F^s_\infty)=L^2(\bR,\bC^2)$, we apply \cite[(Lemma 1,
p.137)]{Henry}. Observe that again, the proof of this result uses that both
vectors $\begin{pmatrix} 1\\ \alpha_p(s) \end{pmatrix}$ and $\begin{pmatrix}
1\\ \omega_n(s) \end{pmatrix}$ are linearly independent and generate the
complete space $\bC^2$. \hfill \end{proof}

\par\bigskip
\begin{nota}
Behind the proof above we have implicitly used  the fact that the operator
$F^s_\infty$ has an ``exponential dichotomy'' in the whole real line $\bR$.  We
refer to \cite{Cop,Pal,San02} for literature relating exponential dichotomies
and Fredholm operators.
\end{nota}

\bigskip
\begin{nota}
Observe that the proof that $A^s_r$ is Fredholm of index 0 is valid for all
$s\in \bC$, while the proof that $A^s_\infty$ is Fredholm of index 0 uses in a
decisive way that $\real s >\max\{f'(0),f'(1)\}$. This is related to the fact
that the essential spectrum of $L^\infty$ is contained in $\{ z\in\bC : \real
z>\max\{f'(0),f'(1)\}\}$, see Remark \ref{essential-spectrum}.
\end{nota}

\par\bigskip
\begin{theorem}\label{th:conver_regular}
For every fixed $\veps>0$, there exists an $r_0>0$ such that for all $r\geq
r_0$ we have $\sigma(L_r)\cap
\{ \real z >-\nu+\veps \} =\{s(r)\}.$ Moreover, $s(r)<0$ is a simple eigenvalue
of
$L_r$ and $s(r)\to 0$ as $r\to +\infty$. In particular, the unique stationary
solution of \eqref{modifpb_interval} is asymptotically stable.
\end{theorem}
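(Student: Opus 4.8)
The plan is to deduce the statement from the spectral convergence Theorem~\ref{eigenvalue-convergence}, exploiting that the eigenvalue problem $L^r w = s w$ with $w(a)=w(b)=0$ is exactly the condition $Ker(A^s_r)\ne\{0\}$ (and likewise $L^\infty w = sw$ is $Ker(A^s_\infty)\ne\{0\}$), with the algebraic multiplicity of $s$ equal to the dimension of the corresponding root subspace $W(s)$ of the pencil $A^0-sB$. By Proposition~\ref{pr:conver_regular} the hypotheses $(i)$--$(ii)$ of Theorem~\ref{eigenvalue-convergence} hold on any bounded $S\subset\{\real s>-\nu\}$; hypothesis $(iv)$ follows because the coefficients $\la_r$ and $f'(\vfi_r)$ are bounded uniformly in $r$ by Lemmas~\ref{lema:conv_la} and~\ref{lema:conver_H1_vfi}; and hypothesis $(iii)$ holds since, by Proposition~\ref{propo:spec_L} together with Remark~\ref{essential-spectrum}, the only point of $\sigma(L^\infty)$ with $\real z>-\nu$ is $0$, so every $s'\ne 0$ in $S$ gives $Ker(A^{s'}_\infty)=\{0\}$.

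First I would localise the spectrum. Fix $\veps\in(0,\nu)$. By Corollary~\ref{bound-spectrum-Lr}, every eigenvalue of $L^r$ with $\real z\ge -\nu+\veps$ lies in the fixed compact box $\mathcal K:=\{z:-\nu+\veps\le\real z\le\rho_0,\ |\imag z|\le(\rho_0+\nu)\sin\phi\}$, uniformly in $r\ge 1$; note that $0\in\mathcal K$ and $\sigma(L^\infty)\cap\mathcal K=\{0\}$. For each $\mu^*\in\mathcal K\setminus\{0\}$ we have $\mu^*\notin\sigma(L^\infty)$, so $A^{\mu^*}_\infty$ is invertible and its root subspace is trivial; Theorem~\ref{eigenvalue-convergence} at $s_0=\mu^*$ then yields $\delta_{\mu^*}>0$ and $r_{\mu^*}$ so that $L^r$ has \emph{no} eigenvalue in the disc $|s-\mu^*|<\delta_{\mu^*}$ once $r\ge r_{\mu^*}$. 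At $s_0=0$, instead, $W(0)$ is one--dimensional because $0$ is algebraically simple for $L^\infty$ (Proposition~\ref{propo:spec_L}); hence for each $\eta>0$ there are $\delta\le\eta$ and $r_\eta$ so that, for $r\ge r_\eta$, the total algebraic multiplicity of eigenvalues of $L^r$ in $|s|\le\delta$ equals $1$.

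Now I would run a finite--cover argument. The set $\mathcal K\setminus\{|s|<\delta\}$ is compact and disjoint from $0$, hence covered by finitely many discs $|s-\mu^*_j|<\delta_{\mu^*_j}$, on each of which $L^r$ carries no eigenvalue once $r$ exceeds the corresponding threshold. Taking $r_0$ larger than $r_\eta$ and all the finitely many $r_{\mu^*_j}$, we conclude that for $r\ge r_0$ the operator $L^r$ has in $\{\real z>-\nu+\veps\}$ exactly one eigenvalue $s(r)$, which is algebraically simple and satisfies $|s(r)|\le\eta$; letting $\eta\to0$ gives $s(r)\to 0$. This establishes $\sigma(L^r)\cap\{\real z>-\nu+\veps\}=\{s(r)\}$ with the claimed multiplicity and limit.

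It remains to place $s(r)$ on the negative real axis. Since $L^r$ has real coefficients, $\sigma(L^r)$ is symmetric about $\bR$; as $s(r)$ is the \emph{unique} eigenvalue in the half--plane $\{\real z>-\nu+\veps\}$ and $\overline{s(r)}$ lies there too, we get $\overline{s(r)}=s(r)$, i.e. $s(r)\in\bR$. By Proposition~\ref{propo:autopositivo} there is no real eigenvalue $\ge0$, so $s(r)<0$. Consequently, for $r\ge r_0$ the whole spectrum of $L^r$ sits in $\{\real z<0\}$ (the part outside $\{\real z>-\nu+\veps\}$ has real part $\le-\nu+\veps<0$, and $s(r)<0$), and since $L^r$ is sectorial the principle of linearized stability yields the asymptotic stability of $\vfi_r$. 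The main obstacle is not any single estimate but the passage from the \emph{local} (near each spectral point) conclusions of Theorem~\ref{eigenvalue-convergence} to a \emph{global} count on the whole region: this is exactly what the uniform box of Corollary~\ref{bound-spectrum-Lr} and the finite subcover make rigorous, together with the conjugation argument for realness. The genuinely hard analytic input, the regular $\cPQ$--convergence via exponential dichotomies, has already been secured in Proposition~\ref{pr:conver_regular}.
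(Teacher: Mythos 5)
Your proposal is correct and follows essentially the same route as the paper: uniform localization of the unstable spectrum via Corollary~\ref{bound-spectrum-Lr}, application of Theorem~\ref{eigenvalue-convergence} through the regular $\cPQ$-convergence of Proposition~\ref{pr:conver_regular}, the computation $\dim W(0)=1$, realness by conjugation symmetry against the multiplicity-one count, and negativity from Proposition~\ref{propo:autopositivo}. Your explicit finite-subcover argument merely spells out what the paper's step (i) leaves implicit, so there is nothing substantively different to report.
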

\begin{proof} Observe first that from Corollary \ref{bound-spectrum-Lr} we have that there exists  $R_0>0$ large enough and independent of $r$ such that
$\sigma(L_r)\cap \{ \real z >-\nu\}\subset  \{|z|\leq R_0\}$ and therefore, the
part of the spectrum of $L^r$ with $\real z >-\nu$  is uniformly bounded.
Hence, from now on in the proof of this theorem, we will only consider $s\in
\bC$ with $|s|\leq R_0$ and $\real s >-\nu$.

Observe that $s\in \sigma(L_r)$ if and only if $Ker(A_r^s)\ne 0$. Moreover,
notice that if $s$ is such that $\real s > -\nu$, then  $Ker(A_\infty^s)\ne
\{0\}$ if and only if $s$ is an eigenvalue of $L^\infty$. Hence, from the
spectral analysis performed above for $L^\infty$, we have that
$Ker(A_\infty^s)=\{0\}$ for all $s$ with $\real s>-\nu$ except for $s=0$, for
which $Ker(A_\infty^s)$ is one dimensional and it is generated by the vector
function $(\varphi_\infty',\varphi_\infty'')$.

Let us calculate the ``root subspace'' associated to $s_0=0$. Following Theorem
\ref{eigenvalue-convergence} we have that $e_0=(\vfi'_\infty,\vfi''_\infty)$.
To calculate $e_1$, we need to solve $A_\infty^0 e_1=B_\infty e_0$, where
$B_\infty$ was defined above. That is,
$$A^0_{\infty}\begin{pmatrix}u\\v\end{pmatrix}=
\begin{pmatrix}0\\ \vfi_\infty'\end{pmatrix}
$$
which can be written as
$$
\left\{
\begin{array}{l}
u_x-v=0 \\
v_x+ f'(\vfi_\infty)u+\lambda_\infty v+\pi_\infty(u)\vfi'_\infty=\vfi'_\infty
\end{array}
\right.
$$
or equivalently $L^\infty_0 (u)=u_{xx}+f'(\vfi)u+\lambda_\infty
u_x=\vfi'_\infty-\pi_\infty(u)\vfi'_\infty\in \hbox{span}(\vfi'_\infty)$ which
has no solution since $\vfi'_\infty$ is an algebraically simple eigenfunction of
$L^\infty_0$. Hence
$$
W(0)=\hbox{span}\left\{\begin{pmatrix}\vfi'_\infty \\
\vfi''_\infty\end{pmatrix}\right\}\qquad \hbox{ and }\qquad
\hbox{dim}(W(0))=1.
$$

Therefore, from the $\cPQ$ regular convergence of $A_r^s$ to $A_\infty^s$ given
by Proposition \ref{pr:conver_regular} and applying the definition of $W_r(s_0,
\delta)$ and the results from Theorem \ref{eigenvalue-convergence}, we have the
following:

\par\noindent i) All values $s(r)\in \bC$ with $\real s(r) >-\nu $ and
$|s(r)|\leq R_0$ for which $Ker(A_r^{s(r)})\ne \{0\}$ satisfy $s(r)\to 0$ as
$r\to +\infty$.

\par\noindent ii) There exists a $\delta>0$ small such that for $r$ large enough we have
dim$(W_r(0,\delta))=1$.

In particular $s(r)$ from i) is a real number since if it were a complex
number, then its complex conjugate $\bar s(r)$ would also satisfy
$Ker(A_r^{\bar s(r)})\ne \{ 0\}$, since if $A_r^{\bar s(r)}(u,v)=(0,0)$ and
$\imag \bar s(r) \ne 0$, then  $\imag (u,v)\ne
(0,0)$. Moreover, since all coefficients of $A_r^{s(r)}$ are real (except for
$s(r)$), we will have $A_r^{\bar s(r)} (\bar u,\bar v)=(0,0)$ and therefore we
will have at least two numbers, $s(r)$ and $\bar s(r)$ in the set $\{ s: \real
s>-\nu, \quad Ker(A_r^s)\ne \{ 0\} \} $. From i) we will have that both of them
have to aproach 0 and therefore, dim$(W_r(0,\delta))\geq 2$, which is a
contradiction
with ii). This shows that $s(r)\in \bR$. Hence, $s(r)$ is a real eigenvalue
of $L_r$, but from Proposition \ref{propo:autopositivo} we have that $s(r)<0$.
\hfill \end{proof}

\section{Numerical experiments and open problems}\label{sec_numerical}
In this section we propose numerical examples that show the efficiency of the
methods analyzed in this paper. Notice that the implementation of numerical
methods necessarily passes by the truncation of the interval and the use of
certain numerical schemes applied to the truncated equation. The stability
analysis carried out for the equation in a bounded domain in the previous
section shows that if the time is large enough but finite an appropriate
initial data will be close to the unique stationary solution. Moreover, for
this fixed time, if the numerical scheme is convergent then for an appropriate
refinement of the discretization mesh, we will obtain a numerical approximation
of the equilibria which, in turn, will also be an approximation of the
travelling wave. Which is the main goal of this paper.

Observe that at this moment, one step further may be given which consists on
the analysis of the dynamics of the equations obtained by the discretization,
maybe discretizing both space and time or just discretizing only the space. The
analysis of the dynamics of the resulting discrete equations and the comparison
with the dynamics of the continuous equations is an interesting subject that
will be analyzed in a forthcoming work.

We consider the prototypical Nagumo equation
\begin{equation}\label{nagumo}
\left\{\begin{array}{l}
u_t = \displaystyle u_{xx} + u(1-u)(u-\alpha), \quad x\in \bR, \ t>0,\quad  \alpha \in \Big(0,\frac 12\Big), \\[5pt]
u(x,0) = u_0(x), \quad x\in \bR,
\end{array}\right.
\end{equation}
for which an explicit travelling wave solution $u(x,t) = \Phi(x-ct)$ is known,
namely
\begin{equation}\label{exactsol}
\Phi(x) = \frac{1}{1+ e^{-x/\sqrt{2}}}, \qquad c =
\sqrt{2}\,\bigg(\alpha - \frac 12 \bigg), \qquad x\in \bR.
\end{equation}
As we have already mentioned in the Introduction, we only consider
here monotonic increasing waves, taking values 0 at $-\infty$ and 1
at $+\infty$, since the other case, that of monotonic decreasing
fronts, can be dealt with by simply changing variables from $x$ to
$-x$.

It is clear that $f(u)=u(1-u)(u-\alpha)$ fulfils the hypotheses of
Theorems~\ref{th:FiMc} and \ref{th:v} and we can use the change of
variables \eqref{v} to approximate both the asymptotic travelling
front $\Phi$ and its propagation speed. In this way, for fixed $J>0$, we
consider the numerical integration of \eqref{modifpb_interval-intro} in the
interval $[-J,J]$, this is
\begin{equation}\label{modif_nagumo}
\left\{\begin{array}{l}
v_t = \displaystyle v_{xx} - \frac{F(1)}
{\| v_x(\cdot,t)\|^2_{L^2(-J,J)}}\, v_x + v(1-v)(v-\alpha),  \quad x\in
[-J,J], \ t>0, \\[1em]
v(x,0) = v_0(x), \quad x\in[-J,J], \\[5pt]
v(-J,t) = 0, \quad v(J,t) = 1, \quad t > 0.
\end{array}\right.
\end{equation}

We applied the method of lines to integrate \eqref{modif_nagumo} up
to time $T=150$, for $\alpha= 1/4$. For the spatial discretization,
we use standard finite differences formulas, centered for the
approximation of $v_x$, on the uniform grid $x_j = -J + j \Delta x$,
$1\le j\le M-1$, for $\Delta x = 2J/M$, and different values of $J$
and $M$. The nonlocal term $\lambda_v$ is approximated by using the
scalar product of the vector with the values of $v_x$ at the grid points $x_j$.

For the time integration of the spatially semidiscrete problem we
use the MATLAB solver {\tt ode15s}. Since we are interested in
computing both $v$ and $\lambda_v$, it is convenient to reformulate
\eqref{modif_nagumo} as a Partial Differential Algebraic Equation
\begin{equation}\label{PDAE_nagumo}
\left\{\begin{array}{l}
v_t = v_{xx} + \lambda_v v_x + v(1-v)(v-\alpha),  \quad x\in [-J,J], \ t>0, \\[5pt]
0 = \lambda_v \langle v_x,v_x \rangle + F(1), \ t>0, \\[5pt]
v(x,0) = v_0(x), \quad x\in[-J,J], \\[5pt]
v(-J,t) = 0, \quad v(J,t) = 1, \quad t > 0,
\end{array}\right.
\end{equation}
The results shown in this section are
obtained with the following options for the solver:
\begin{equation}
\begin{array}{l}
\mbox{\tt e = ones(M-1,1);} \\
\mbox{\tt Massmatrix = spdiags([e;0],0,M,M);} \\
\mbox{\tt options = odeset(`Mass',Massmatrix,`MassSingular',`yes',...}\\
\hspace{2em} \mbox{\tt `MStateDependence',`none',...}\\
\hspace{2em} \mbox{\tt
`AbsTol',[1e-8*ones(M-1,1);1e-10],`RelTol',1e-8);}.
\end{array}
\end{equation}
In the Figures below we show the results obtained for different
possible choices of the initial data $u_0(x)$. These plots show how
for $J$ and $M$ large enough the solution to the discrete
problem seems to converge exponentially fast in time to a
stationary state close to the stationary state of \eqref{modifpb}. The
same happens with the value of the propagation speed. These numerical results
are in agreement with the ones reported in \cite{BeTh04,Th05} and, up to
a great extent, are to be expected from our theoretical analysis.
However, let us notice that while Theorem~\ref{th:v} guarantees convergence to
the equilibrium of the problem in the whole line for a wide class of initial
data, Theorem~\ref{th:nonloc_interval} guarantees convergence for the problem
in a bounded interval only for initial data close enough to the equilibrium
and in a large enough interval. How ``close'' and ``large'' are ``enough'', is
not really specified.

Several additional questions arise now related with the asymptotic behavior of
problem \eqref{PDAE_nagumo} as $t\to \infty$. For instance, we know that the
unique equilibrium of \eqref{modifpb_interval-intro} approaches the unique
equilibrium of \eqref{modifpb2} as the interval grows to $\bR$. A natural
question now is what the rate of convergence with repect to the length on the
interval is. With the notation in this section, this is the convergence with
respect to $J$.

Concerning the boundary conditions, other variants are meaningful and could in
principle lead to a faster convergence to equilibrium, such us homogeneous
boundary conditions of Neumann type or even more sophiscated conditions
like transparent boundary conditions. Notice that then problems \eqref{modifpb}
and \eqref{modifpb2} lead to different IVBP.

Another issue is the effect of the numerical approximation. 
One could address for instance the study of the speed of convergence towards
the asymptotic profiles depending on the chosen numerical scheme, the mesh
size, etc. In particular, it would be natural to analyze the question of
whether upwinding yields better convergence rates. The same questions make
sense for fully discrete approximation schemes.

Finally, let us notice how the worst approximation results displayed
in Figure~\ref{errckt} illustrate the importance of capturing
properly the front of the asymptotic profile. In other words, the
importance of controlling the value of the phase $x_1$ in
\eqref{conver_v}. A careful study of the dependence of this location
on the initial data $u_0$ is in order.

All these questions are beyond the
scope of the present paper.

\begin{ilustracion}\label{ex_linini}
{\rm We consider the linear initial data
\begin{equation}
u_0(x) = \frac{x+J}{2J}, \qquad x \in [-J,J].
\end{equation}
The results plotted in Figures~\ref{vlin} and \ref{errclin} were
obtained with $J=40$, $\Delta x = 0.1$ and $\Delta x= 0.025$.

\begin{figure}
  \centering
    \includegraphics[viewport= 44    44   372   346,width=0.45\textwidth]{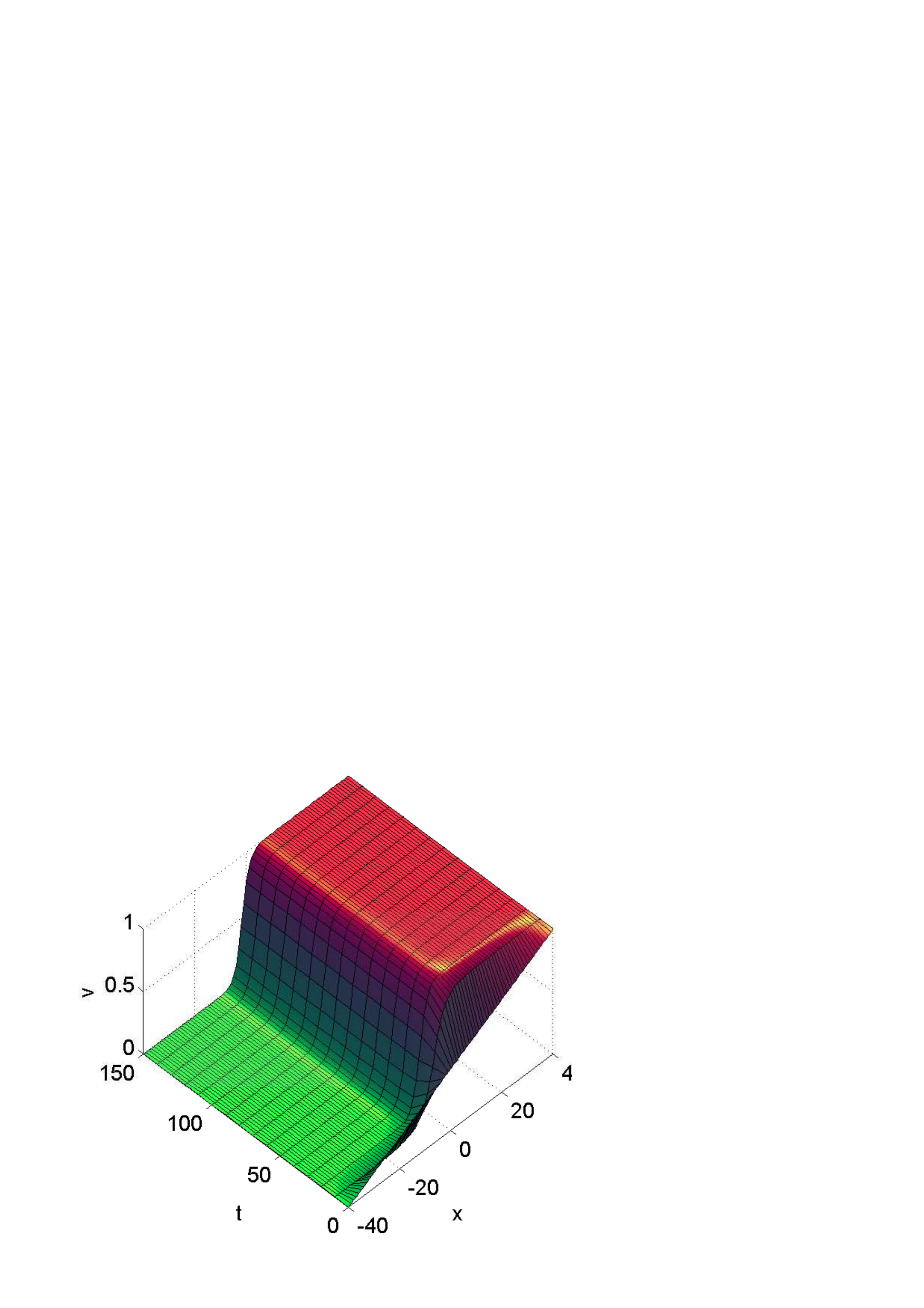}
    \includegraphics[viewport=32    29   372   354, width=0.40\textwidth]{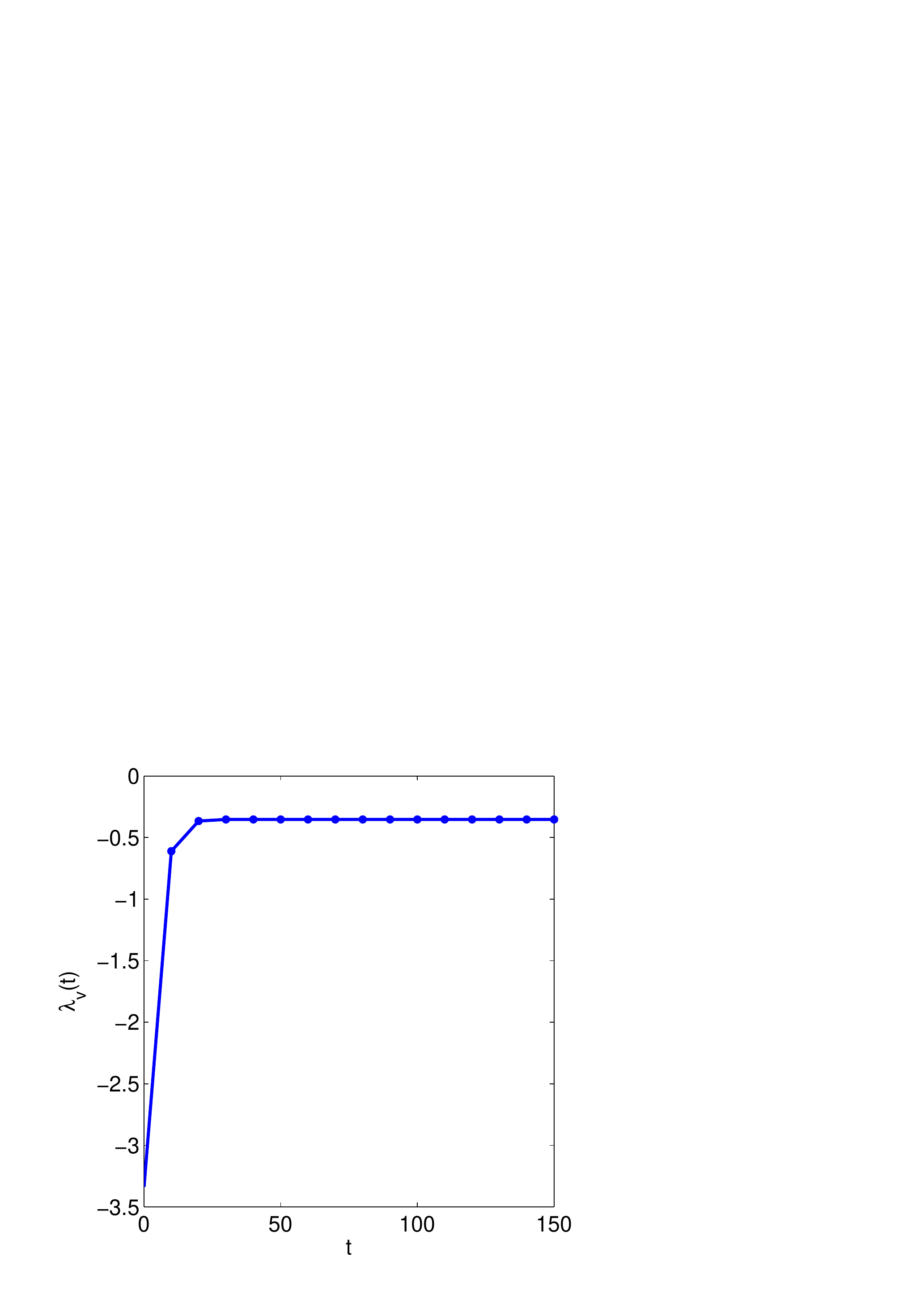}
  \caption{Solution for $u_0$ in Example~\ref{ex_linini} (left) and evolution of $\lambda_v$ (right)
for $J=40$ and $\Delta x = 0.1$. }
  \label{vlin}
\end{figure}

\begin{figure}
  \centering
     \includegraphics[viewport=35    29   372   358 ,width=0.45\textwidth]{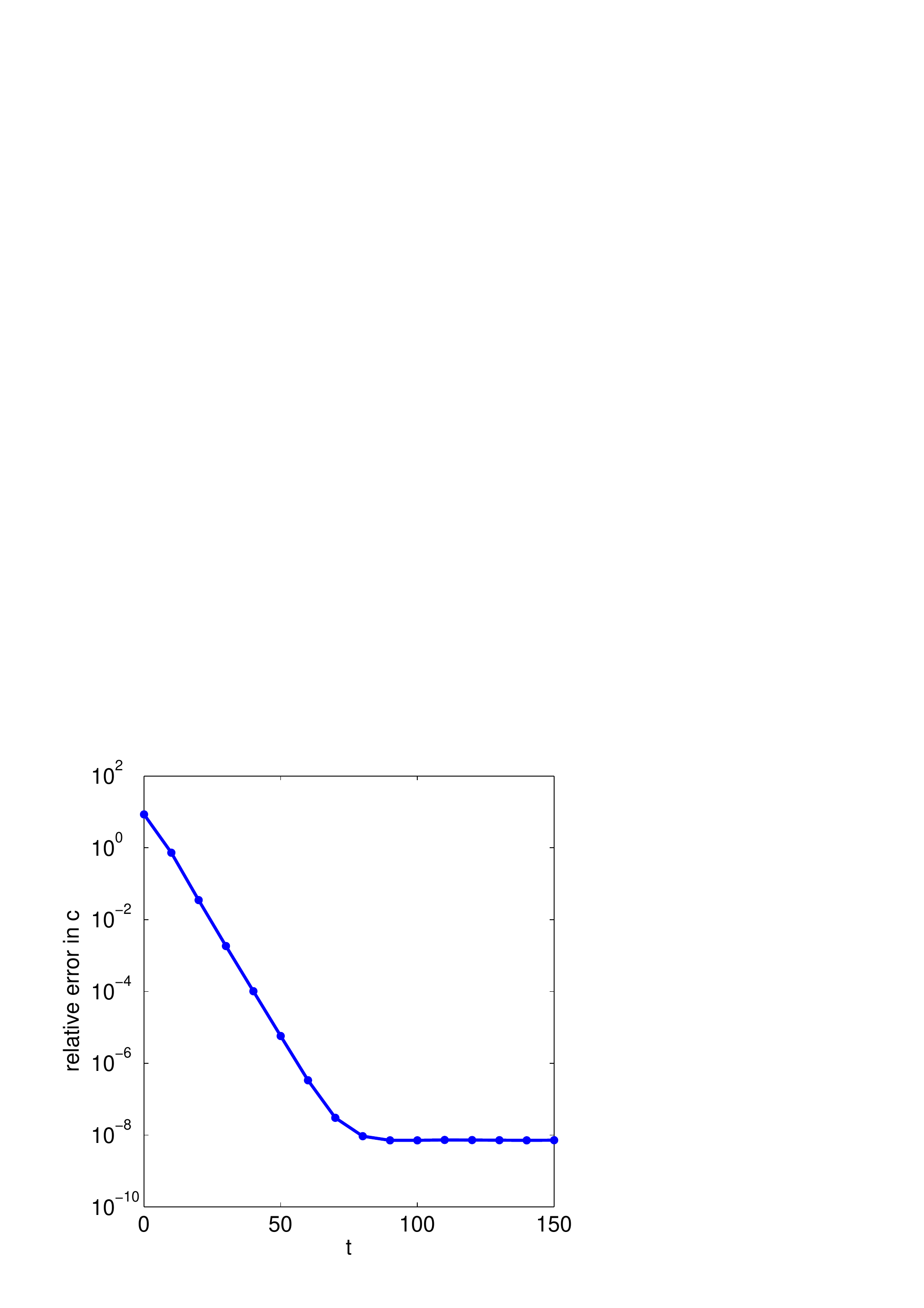}
     \includegraphics[viewport= 35    29   372   358,width=0.45\textwidth]{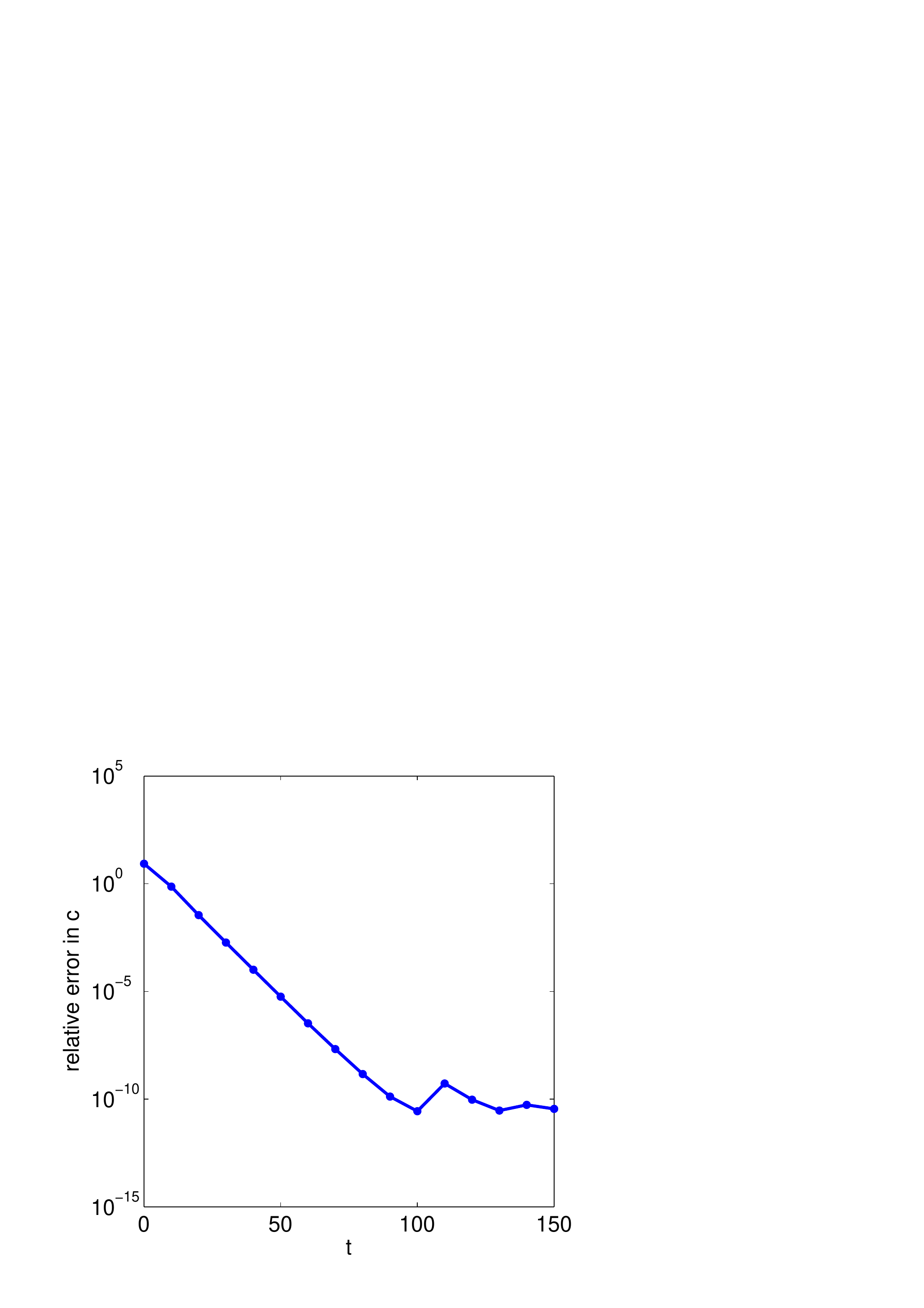}
  \caption{Error in the approximation of $c$ for Example~\ref{ex_linini}.
Left: $\Delta x=0.1$, Right: $\Delta x=0.025$.}
  \label{errclin}
\end{figure}

}

\end{ilustracion}

\begin{ilustracion} \label{ex_kt}
{\rm We consider the initial data
\begin{equation}
u_0(x) = \frac 12\Big(1 + 0.53 \frac{x}{J} + 0.47\sin
\Big(-\frac{3\pi x}{2J}\Big)\Big).
\end{equation}

\begin{figure}
  \centering
   \includegraphics[viewport= 44    44   372   346,width=0.45\textwidth]{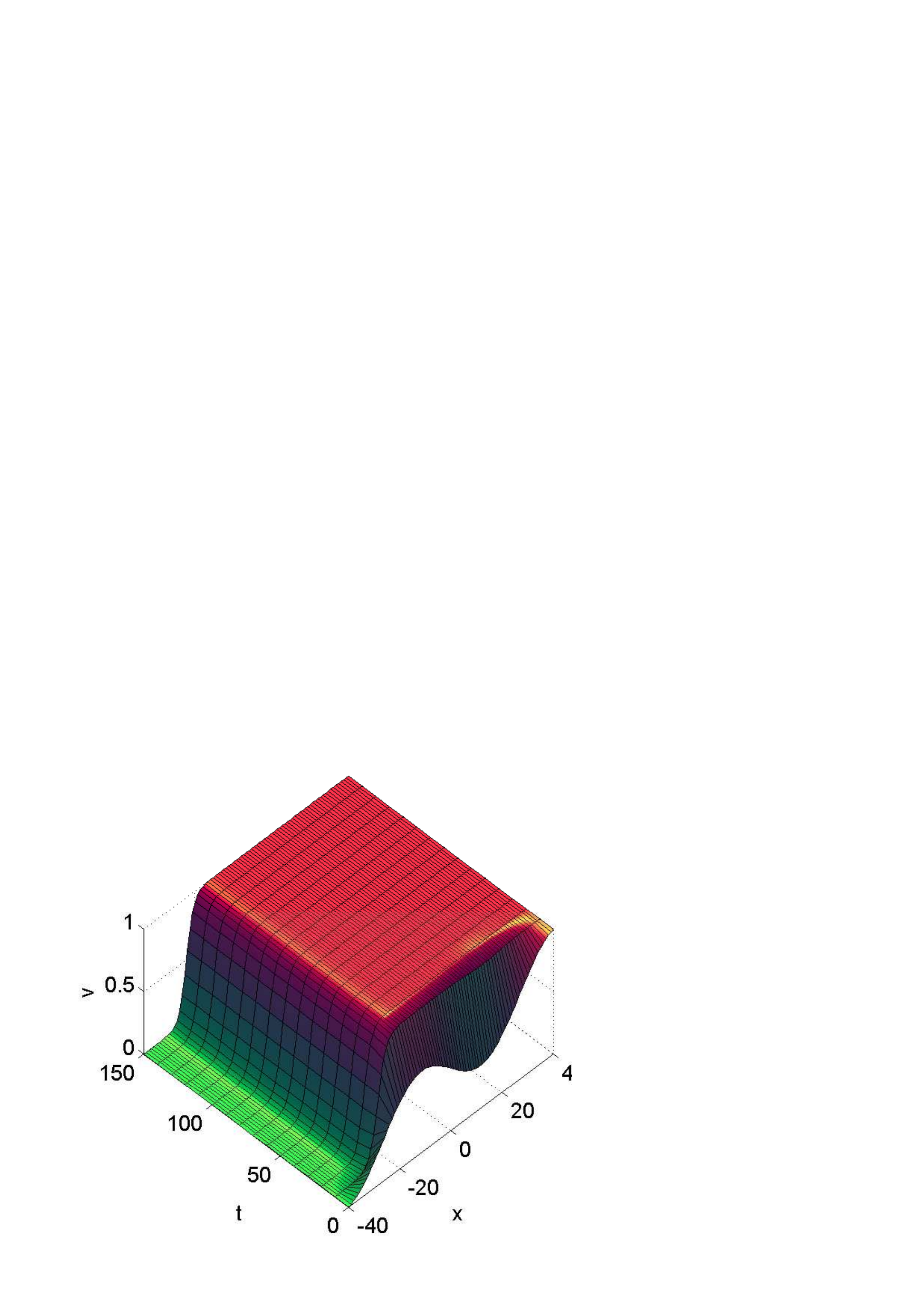}
  \includegraphics[viewport= 32    29   372   354,width=0.40\textwidth]{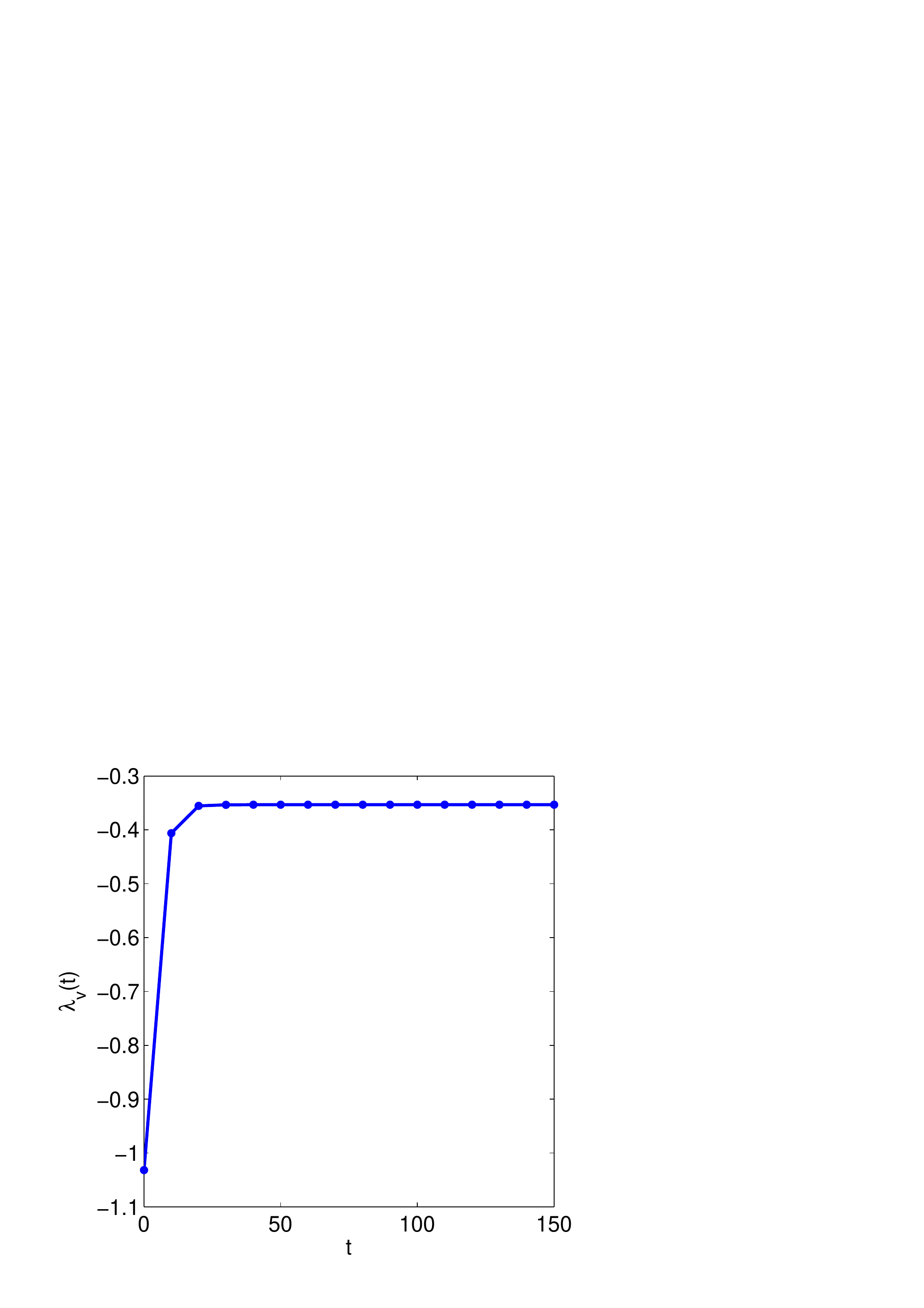}
  \caption{Solution for $u_0$ in Example~\ref{ex_kt}(left) and evolution of $\lambda_v$ (right)
for $J=40$ and $\Delta x = 0.1$.}
  \label{vkt}
\end{figure}

\begin{figure}
  \centering
    \includegraphics[viewport=41    29   372   358,width=0.45\textwidth]{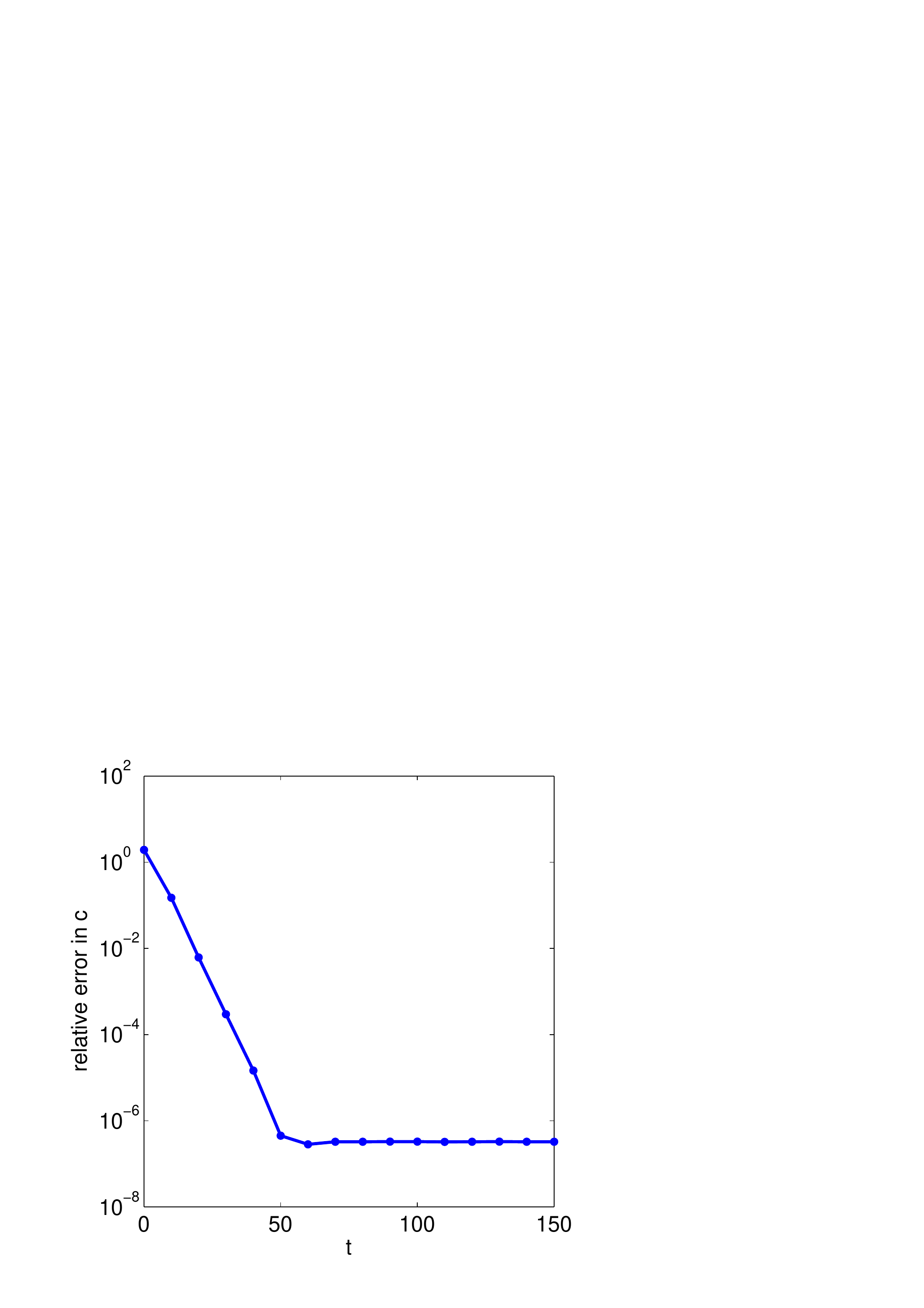}
    \includegraphics[viewport=41    29   372   358 ,width=0.45\textwidth]{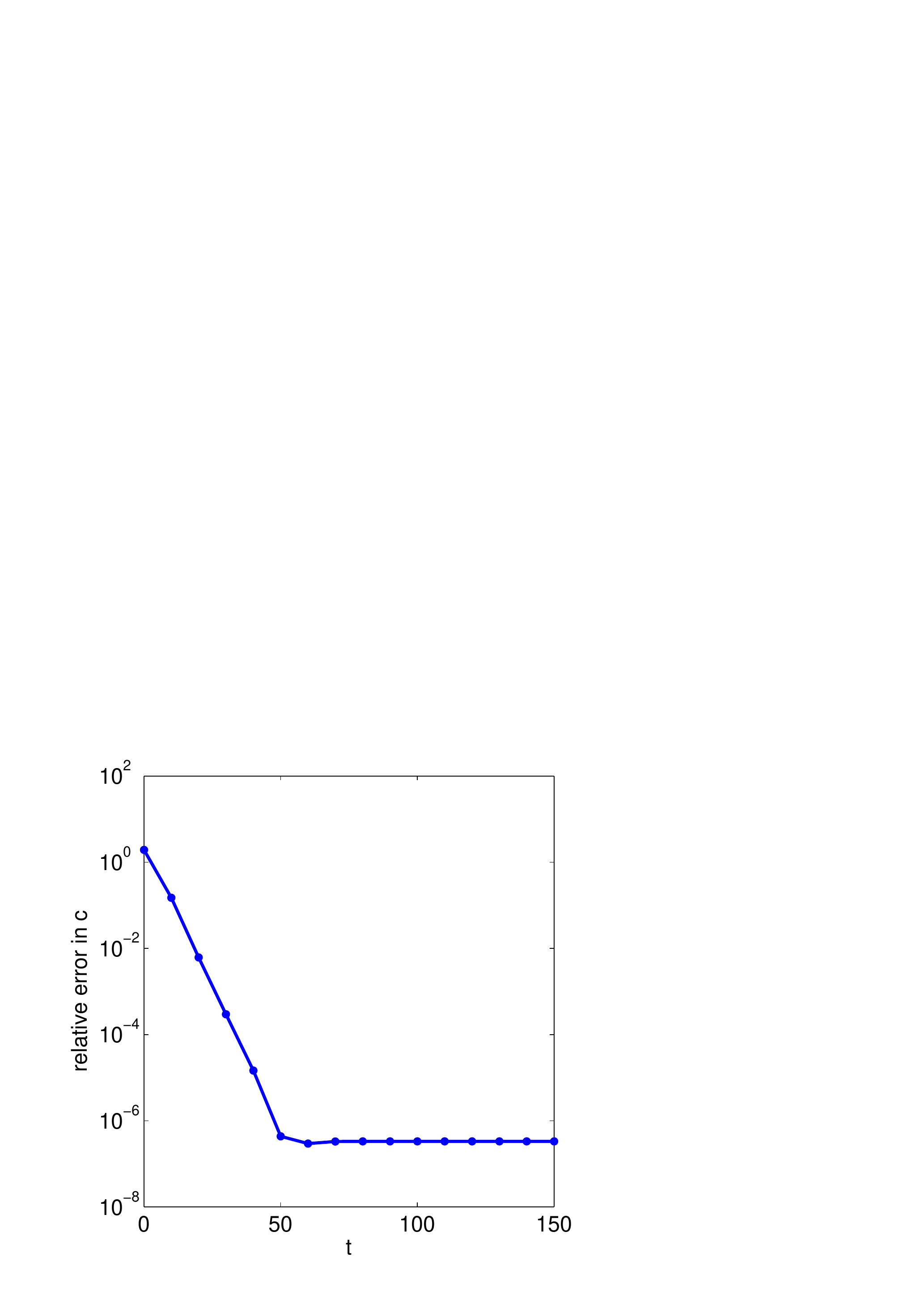}
  \caption{Error in the approximation of $c$ for Example~\ref{ex_kt}.
Left: $\Delta x=0.1$, Right: $\Delta x=0.025$.}
  \label{errckt}
\end{figure}

}
\end{ilustracion}

\begin{ilustracion} \label{ex_nbc}
{\rm We consider the initial data
\begin{equation}
u_0(x) = \left\{ \begin{array}{ll} 0.2, \qquad & \mbox{if }\ x < 0 \\
0.8, \qquad & \mbox{if }\ x > 0 .
\end{array}\right.
\end{equation}
In this case $u_0$ does not satisfy the boundary conditions. We chose this
example because if we consider the natural extension of $u_0$
to the whole line (by 0.8 to the right and 0.2 to the left), Theorem~\ref{th:v}
guarantees the convergence of the solution of \eqref{nagumo} to $\Phi$ in
\eqref{exactsol}.

\begin{figure}
  \centering
    \includegraphics[viewport=44    44   372   346,width=0.45\textwidth]{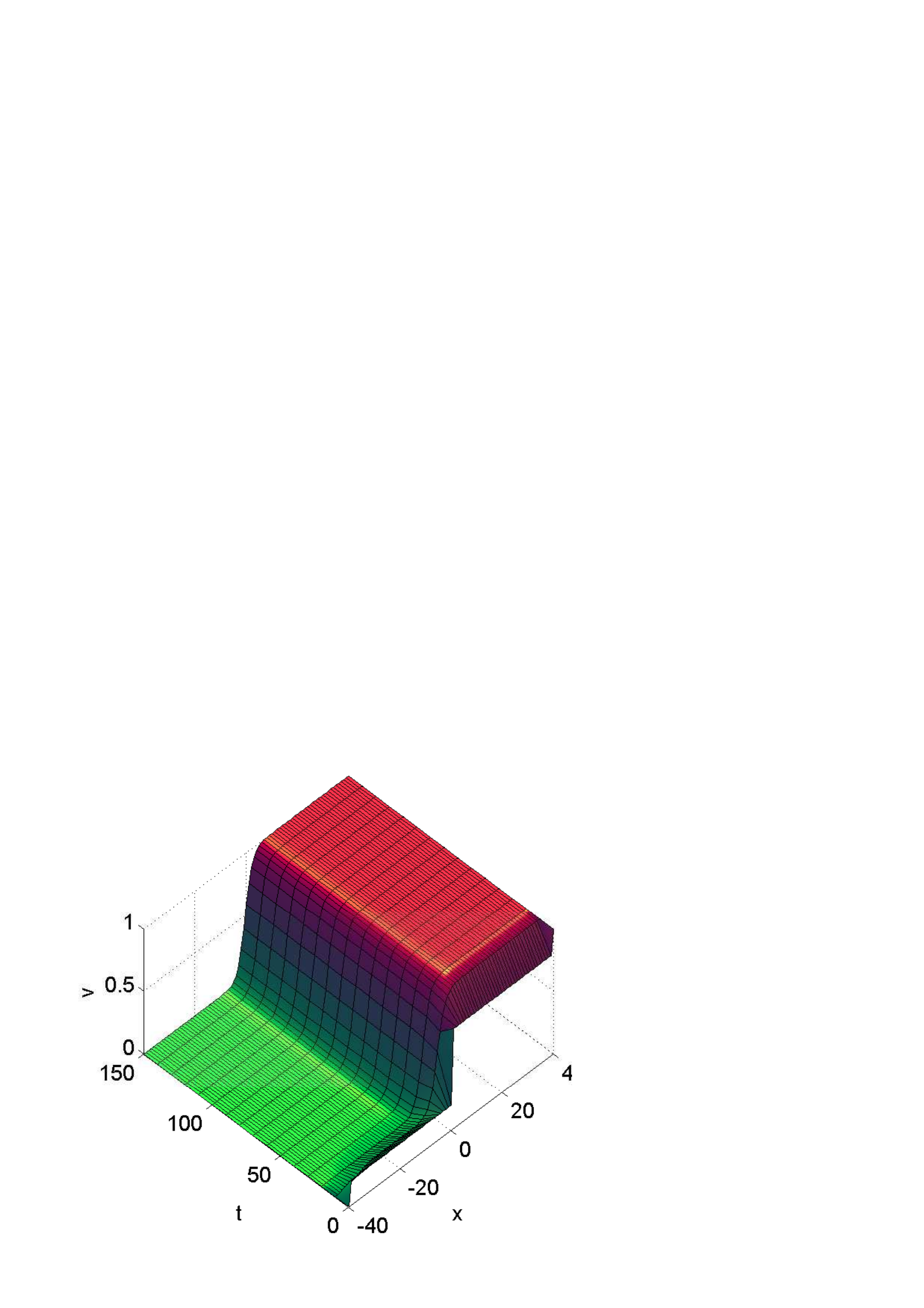}
   \includegraphics[viewport= 28    29   372   355,width=0.40\textwidth]{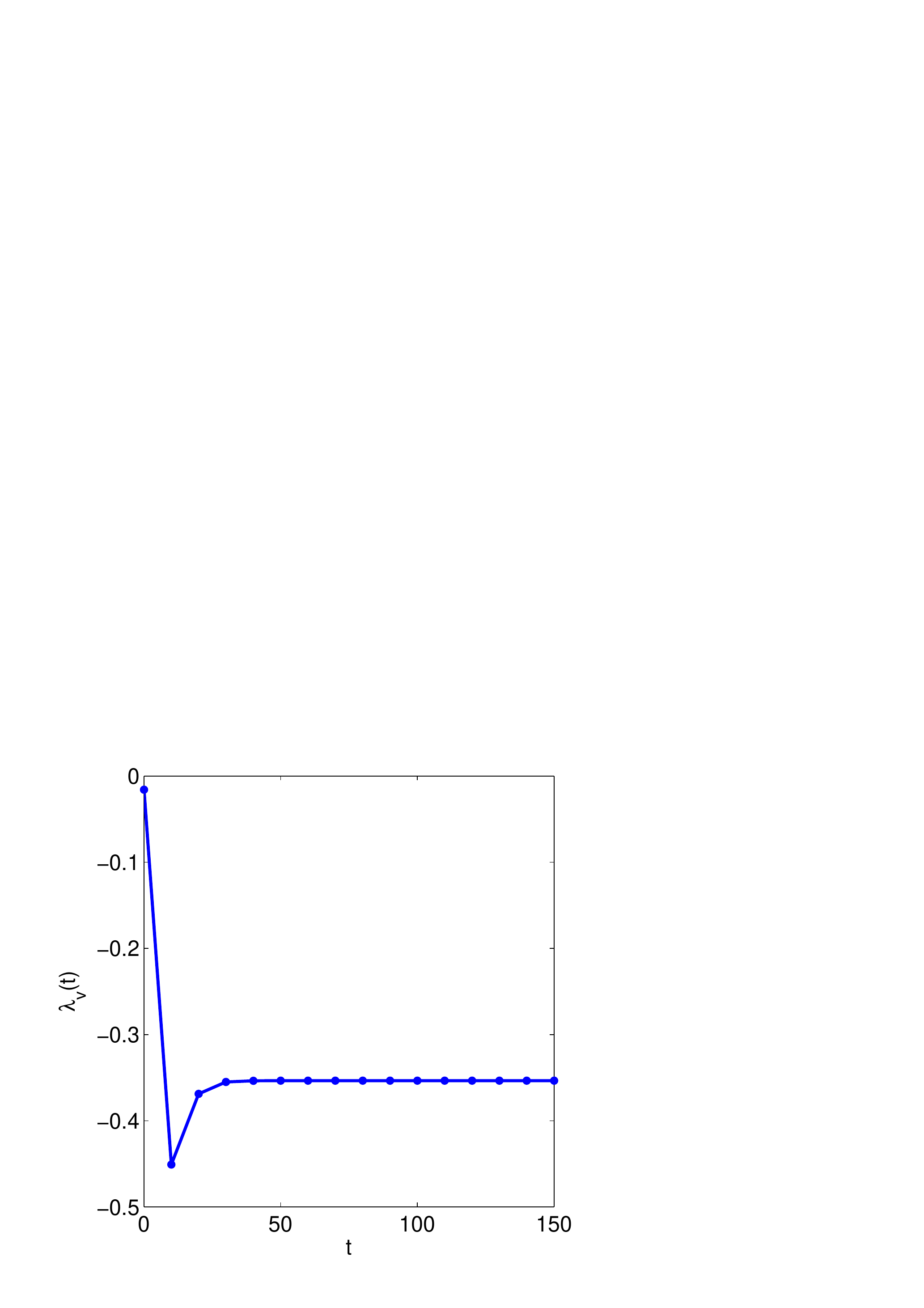}
  \caption{Solution for $u_0$ in Example~\ref{ex_kt} (left) and evolution of $\lambda_v$ (right)
for $J=40$ and $\Delta x = 0.1$.}
  \label{vnbc}
\end{figure}

\begin{figure}
  \centering
    \includegraphics[viewport=41    29   372   363 ,width=0.45\textwidth]{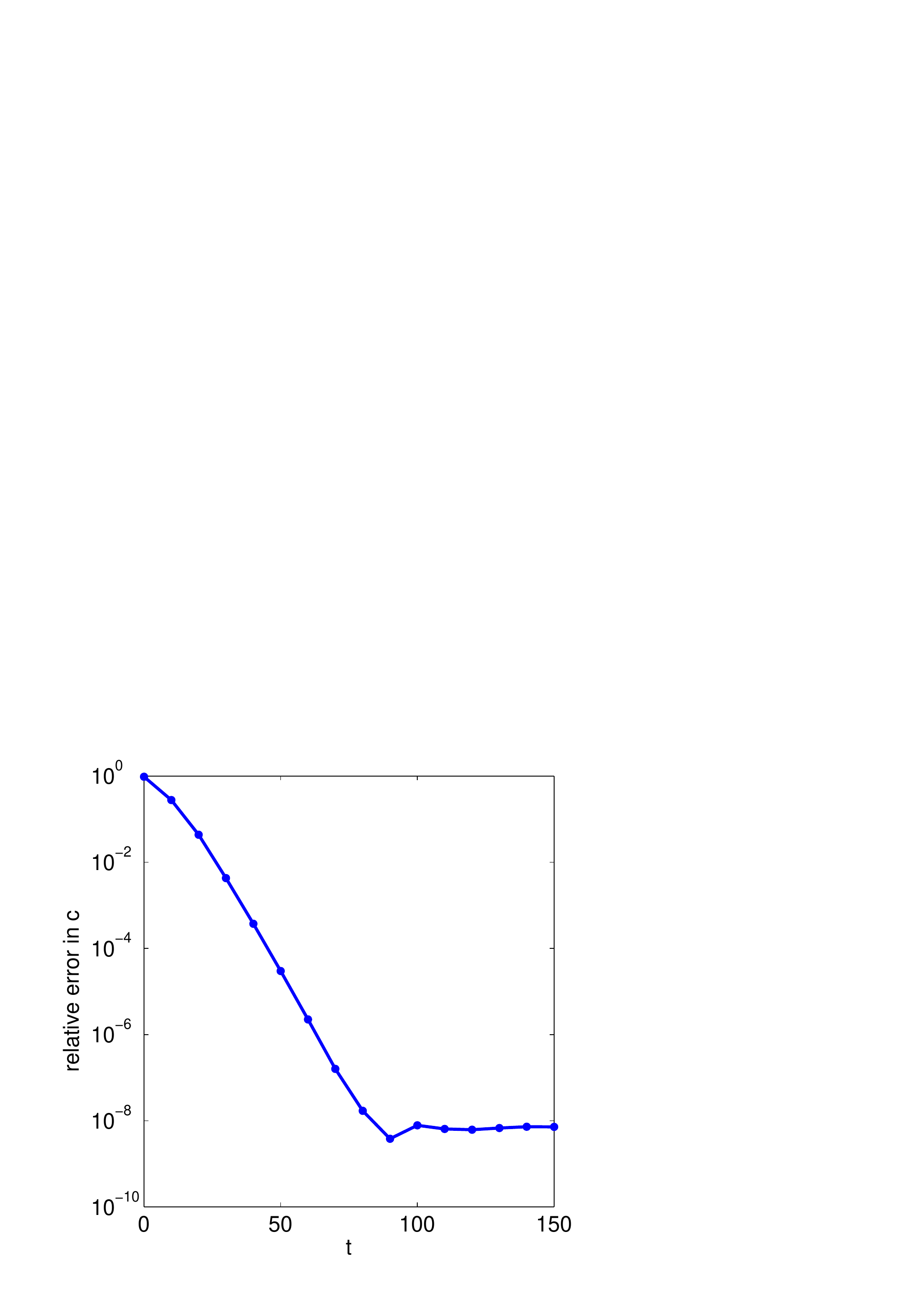}
    \includegraphics[viewport= 41    29   372   364,width=0.45\textwidth]{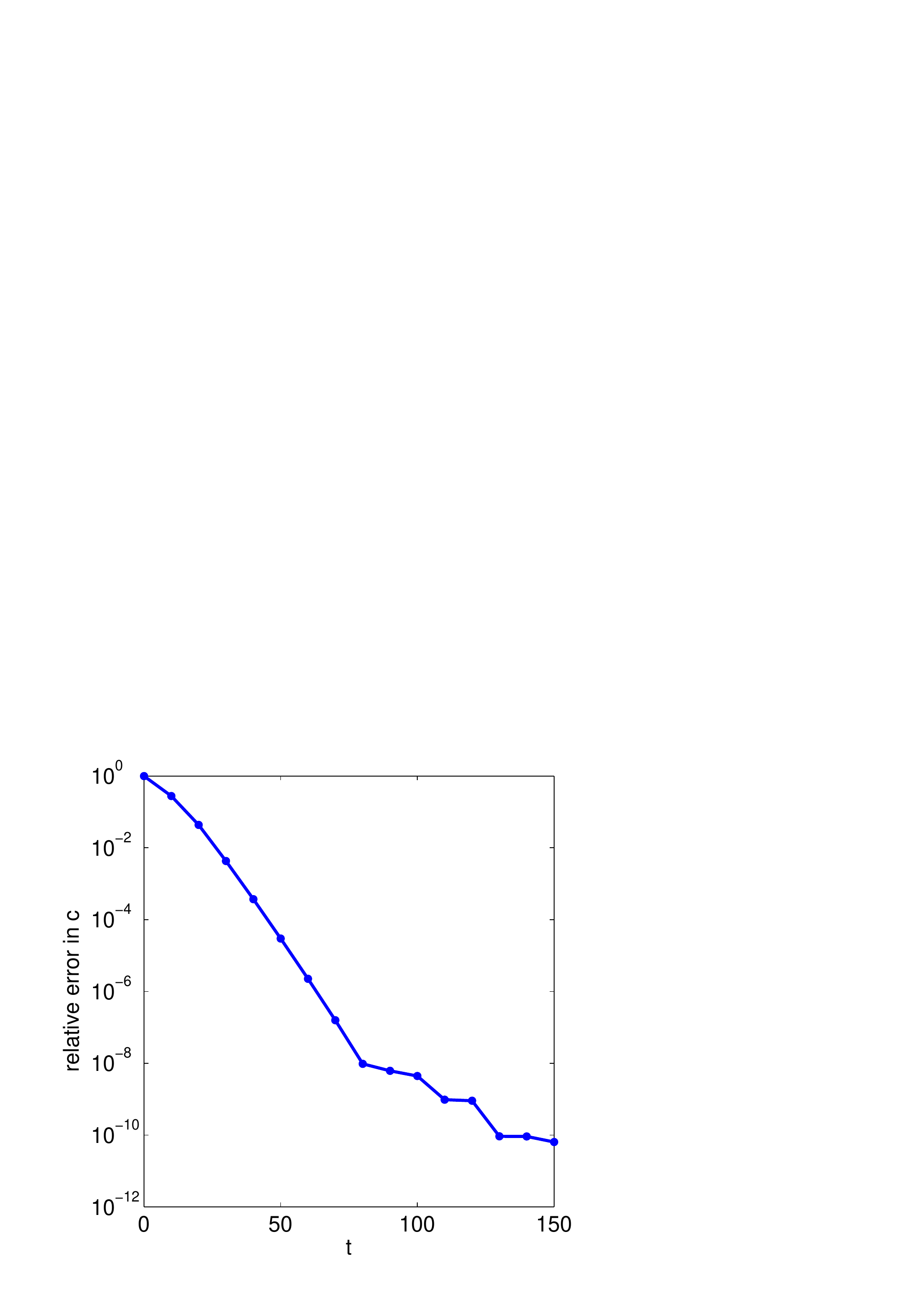}
  \caption{Error in the approximation of $c$ for Example~\ref{ex_kt}.
Left: $\Delta x=0.1$, Right: $\Delta x=0.025$.}
  \label{errcnbc}
\end{figure}

}
\end{ilustracion}

\appendix
\section{Technical lemma}\label{appendix-a}
We include in this appendix a technical result, where we study in detail the behavior at
$\pm \infty$ of the bounded solutions to a certain kind of second order
differential equations with variable coefficients.

\begin{lemma}\label{lema:asymp_ode}
Let $c\in \bR$ and let us consider the second order (non homogeneous) scalar
differential equation
\begin{equation}\label{ode_varcoef}
\psi''(x) + c \psi'(x) + a(x) \psi(x) = f(x), \qquad x_0 < x <
\infty,
\end{equation}
where
\par\noindent i)   $a(x)$ is a bounded piecewise continuous function satisfying
\begin{equation} \label{cota_b}
\limsup_{x\to+\infty}|a(x)-a|\leq M_1e^{-\theta x},
\end{equation}
\par\noindent ii) the function $f(x)$ satisfies
\begin{equation} \label{cota_f}
|f(x)|\leq M_2e^{-\tau x},
\end{equation}
where $\theta,\tau>0$. Then any bounded solution $\psi$ of \eqref{ode_varcoef}
tends to 0 as $x\to
\infty$. Moreover, if $0<\omega<\min \{\theta, -r_1, \tau\}$, with
$r_1 = \frac{-c-\sqrt{c^2-4a}}{2} < 0$, then
$$
|\psi(x)|, \, |\psi'(x)|, \, |\psi''(x)|  \le M e^{-\omega x}, \quad  \mbox{ for }\ x\ge x_2,
$$
for some constant $M>0$.
\end{lemma}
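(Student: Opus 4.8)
The plan is to regard \eqref{ode_varcoef} as a constant-coefficient equation perturbed by a known, exponentially small forcing, and to represent its bounded solution through variation of parameters adapted to the stable mode at $+\infty$. Write $r_2=\frac{-c+\sqrt{c^2-4a}}{2}$ for the second root of the characteristic polynomial $r^2+cr+a=0$. The hypothesis $r_1<0$ forces $c^2-4a\ge 0$ (real roots), and in the regime relevant for the application (where $a=f'(1)<0$, or $a=f'(0)<0$) one has $r_1 r_2=a<0$, hence $r_1<0<r_2$: the saddle case, on which I focus. First I would freeze the coefficient at its limit, writing the equation as $\psi''+c\psi'+a\psi=g$ with $g(x):=f(x)+(a-a(x))\psi(x)$. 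Since $\psi$ is a \emph{given} bounded solution, $g$ is a fixed known function, and hypotheses (i)--(ii) yield $|g(x)|\le M_2 e^{-\tau x}+M_1\|\psi\|_\infty e^{-\theta x}\le C e^{-\rho x}$ for $x\ge x_1$, with $\rho:=\min\{\theta,\tau\}$. There is no circularity and no fixed-point iteration is needed: $g$ decays at a definite rate purely because $\psi$ is bounded.

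The core step is the representation of the bounded solution. The forced constant-coefficient equation admits the particular solution
$$\psi_p(x)=-\frac{1}{r_2-r_1}\int_x^\infty e^{r_2(x-s)}g(s)\,ds-\frac{1}{r_2-r_1}\int_{x_1}^x e^{r_1(x-s)}g(s)\,ds=:-\frac{1}{r_2-r_1}\big(I(x)+J(x)\big),$$
where the tail integral $I$ converges because $r_2+\rho>0$ and $g$ decays, and a short computation using $r_2^2+cr_2+a=0$, $r_1^2+cr_1+a=0$ and $r_1+r_2=-c$ confirms $\psi_p''+c\psi_p'+a\psi_p=g$. Since $\psi-\psi_p$ solves the homogeneous equation, it equals $Ae^{r_1 x}+Be^{r_2 x}$; the estimates below show $\psi_p$ is bounded (indeed decaying), so boundedness of $\psi$ together with $r_2>0$ forces $B=0$, giving $\psi=Ae^{r_1 x}+\psi_p$. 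This is precisely the statement that a bounded solution must lie on the stable mode at $+\infty$, and it is the single place where the sign structure $r_1<0<r_2$ is essential. I expect this to be the main (though mild) obstacle; the remainder is routine estimation.

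Finally I would estimate the three pieces against $e^{-\omega x}$ for any $0<\omega<\min\{\theta,-r_1,\tau\}=\min\{-r_1,\rho\}$. The term $Ae^{r_1 x}$ decays at rate $-r_1>\omega$. For $I$ one has $|I(x)|\le C\int_x^\infty e^{r_2(x-s)}e^{-\rho s}\,ds=\frac{C}{r_2+\rho}e^{-\rho x}\le C'e^{-\omega x}$ since $\omega<\rho$. For $J$ one checks that $e^{\omega x}|J(x)|\le C\,e^{(r_1+\omega)x}\int_{x_1}^x e^{(-r_1-\rho)s}\,ds$ stays bounded: using $\omega<-r_1$ one has $e^{(r_1+\omega)x}\to 0$, while the integral contributes at most $e^{(-r_1-\rho)x}$ (up to a linear factor when $-r_1-\rho=0$), so the product behaves like $e^{(\omega-\rho)x}$ or decays outright, in each of the cases $-r_1-\rho$ positive, zero, or negative. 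Hence $|\psi(x)|\le M e^{-\omega x}$ for $x\ge x_2$, which in particular establishes $\psi(x)\to 0$.

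For the derivatives, differentiating the representation and using $I'=r_2 I-g$, $J'=r_1 J+g$ gives $\psi'(x)=Ar_1 e^{r_1 x}-\frac{r_2}{r_2-r_1}I(x)-\frac{r_1}{r_2-r_1}J(x)$, in which the boundary contributions $g(x)$ cancel; thus $\psi'$ inherits the same bound $|\psi'(x)|\le Me^{-\omega x}$. For $\psi''$ I would simply return to the equation, writing $\psi''=f-c\psi'-a(x)\psi$, so that the bound on $\psi''$ follows from the bounds on $\psi'$ and $\psi$ (with $a$ bounded) together with $|f(x)|\le M_2 e^{-\tau x}$ and $\tau\ge\omega$. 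This yields the asserted estimate for $\psi$, $\psi'$ and $\psi''$ simultaneously and completes the proof.
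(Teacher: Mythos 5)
Your proposal is correct and follows essentially the same route as the paper's Appendix A proof: the same freezing of the coefficient to obtain $\psi''+c\psi'+a\psi=b(x)$ with $|b(x)|\le Ce^{-\gamma x}$, the same variation-of-constants representation in which boundedness kills the $e^{r_2x}$ mode, and the same case analysis on $r_1+\gamma$ for the estimates, with $\psi''$ recovered from the equation. The only cosmetic difference is that you package the particular solution as a stable/unstable splitting before identifying the constant, whereas the paper solves for $D$ directly from the general variation-of-constants formula.
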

\begin{proof}
Let us rewrite the equation \ref{ode_varcoef} as
\begin{equation}\label{ode_diff}
\psi'' + c \psi' + a \psi = (a-a(x))\psi(x)+f(x) := b(x),
\end{equation}
and observe that from i) and ii) we have
$|b(x)|\leq Me^{-\gamma x}$ for some $M>0$ and with $\gamma=\min\{ \tau, \theta\}>0$.

The roots of the characteristic equation associated to the homogeneous equation of  \eqref{ode_diff} are
precisely
\begin{equation}\label{roots}
r_1 = \frac{-c-\sqrt{c^2-4a}}{2} < 0 \qquad \mbox{ and } \qquad
r_2 = \frac{-c+\sqrt{c^2-4a}}{2} > 0.
\end{equation}

By the variation of constants formula, any solution $\psi$ of \eqref{ode_diff}
is of the form
\begin{equation}\label{vfi_general}
\psi(x) = Ce^{r_1x} + De^{r_2 x} + \frac{1}{r_1-r_2}
\int_{x_0}^x (e^{r_1(x-s)}-e^{r_2(x-s)})b(s)\,ds,
\end{equation}
for $C, D\in\bR$. If require further
that $\psi$ is bounded, the only possible choice for $D$ is
\begin{equation}\label{D}
D = \frac{1}{r_1-r_2}
\int_{x_0}^{\infty} e^{-r_2s} b(s)\,ds,
\end{equation}
leading to
\begin{equation}\label{vfi_bounded}
\psi(x) = Ce^{r_1x} + \frac{e^{r_1 x}}{r_1-r_2} \int_{x_0}^x
e^{-r_1 s} b(s)\,ds + \frac{e^{r_2 x}}{r_1-r_2}
\int_{x}^{\infty} e^{-r_2 s} b(s)\,ds.
\end{equation}
But, if $r_1+\gamma\ne 0$, then
$$
\left|\int_{x_0}^x
e^{-r_1 s} b(s)\,ds \right|\leq M \int_{x_0}^x
e^{-(r_1+\gamma) s} \leq M
\frac{e^{-(r_1+\gamma)x_0}-e^{-(r_1+\gamma)x}}{r_1+\gamma}
$$
and if $r_1+\gamma=0$, then
$$
\left|\int_{x_0}^x
e^{-r_1 s} b(s)\,ds \right|\leq M(x-x_0).
$$
Moreover, since $r_2+\gamma>0$, we have
$$
\left|\int_{x}^\infty
e^{-r_2 s} b(s)\,ds \right|\leq M \int_{x}^\infty
e^{-(r_2+\gamma) s} \leq M \frac{e^{-(r_2+\gamma)x}}{r_2+\gamma}.
$$
Plugging these estimates in \eqref{vfi_bounded}, and with some simple
computations we obtain that, if $r_1+\gamma\ne 0$ then
$$
|\psi(x)|\leq C_1 e^{r_1 x}+ C_2e^{-\gamma x}+C_3e^{-\gamma x}
$$
and if $r_1+\gamma=0$, then
$$
|\psi(x)|\leq C_1 e^{r_1 x}+ C_2e^{-\gamma x}(x-x_0)+C_3e^{-\gamma x},
$$
from where the conclusion for $\psi$ follows easily.

To obtain the bounds for $\psi'(x)$ we just take derivatives in
\eqref{vfi_bounded}. We obtain an extra term, $b(x)$, and the rest of the
terms are estimated similarly as in the case of $\psi(x)$. To estimate
$\psi''(x)$ we use the equation satisfied by $\psi$ and the bounds obtained for
$\psi(x)$ and $\psi'(x)$. \hfill \end{proof}

\begin{nota}\label{nota-apendice}
The same conclusions of the previous Lemma hold if we are dealing with the
interval $-\infty<x<x_1$. In this case we need to specify the behavior of the
functions $a(\cdot)$ and $f(\cdot)$ as $x\to -\infty$ and the conclusion is the
exponential decay of the solution as $x\to -\infty$.
\end{nota}

\par\bigskip

\section{Proof of Lemma \ref{operatorA}} \label{proof-lemma}
Finally, we include in this appendix a proof of Lemma
\ref{local-operator-behavior}.
\begin{proof}
$(i)$ and $(ii)$ follow from \cite[Section 5.4 and Appendix A]{Henry}.

$(iii)$ Observe first that we know the behavior of $\vfi_\infty(x),
\vfi'_\infty(x)$ as $x\to \pm \infty$. Notice that the orbit $x\to
(\vfi_\infty(x), \vfi'_\infty(x))$ is the heteroclinic orbit connecting $(0,0)$
(as $x\to -\infty$) with $(1,0)$ (as $x\to +\infty$) of the ODE,
\begin{equation}
\left\{
\begin{array}{l}
U'=V,     \\
V'=-\lambda V-f(U)
\end{array}
\right.
\end{equation}
and therefore the orbit lies in the unstable manifold of $(0,0)$ and the stable
manifold of $(1,0)$. Via linearization of the equation in $(0,0)$ and $(1,0)$
we can obtain that if we define
$$
r_1=\frac{-\lambda-\sqrt{\lambda^2-4f'(1)}}{2}, \qquad
r_2=\frac{-\lambda+\sqrt{\lambda^2-4f'(0)}}{2},
$$
we have
$$
|\vfi_\infty(x)|, |\vfi'_\infty(x)|\leq Ce^{r_2 x}, \hbox{ as }x\to - \infty,
$$
$$
|\vfi_\infty(x)-1|, |\vfi'_\infty(x)|\leq Ce^{r_1 x}, \hbox{ as }x\to +\infty
$$
and therefore
$$
|f(\vfi_\infty)|, |f'(\vfi_\infty(x))-f'(0)|\leq Ce^{r_2 x}, \hbox{ as }x\to -
\infty
$$
and
$$
|f(\vfi_\infty)|, |f(\vfi'_\infty(x))-f'(1)|\leq Ce^{r_1 x}, \hbox{ as }x\to
+\infty.
$$

Using the equation for $\vfi_\infty$, that is, $\vfi_\infty''=-\lambda
\vfi'_\infty-f(\vfi_\infty)=0$, we get also
$$
|\vfi''_\infty(x)|\leq Ce^{r_2 x}, \hbox{ as }x\to - \infty,\quad
|\vfi''_\infty(x)|\leq Ce^{r_1 x}, \hbox{ as }x\to +\infty.
$$

Applying Lemma \ref{lema:asymp_ode} and Remark \ref{nota-apendice}, we have
that if there exists a function $w$ such that $L_0^\infty w=\vfi'_\infty$, then
$$
|w(x)|, |w'(x)|, |w''(x)|\leq Ce^{r_2^- x}, \hbox{ as }x\to - \infty,
$$
and
$$
|w(x)|,|w'(x)|, |w''(x)|\leq Ce^{r_1^- x}, \hbox{ as }x\to +\infty.
$$
where $0<r_2^-<r_2$ and $r_1<r_1^- < 0$ but arbitrarily close to $r_2$ and
$r_1$, respectively.

Once this estimates have been obtained, we can perform the change of variables
$v(x)=e^{\frac{\lambda}{2}x}w(x)$ which will be a function in $H^2(\bR)$,
because of the estimates found above for $w,w',w''$. Therefore, $v$ will be a
solution of
\begin{equation}\label{selfadjoint-equation}
v''+ \left(f'(\vfi_\infty(x))+\frac{|\lambda|^2}{2} \right)v=\chi_\infty(x),
\end{equation}
where $\chi_\infty(x)=e^{\frac{\lambda}{2}x}\vfi'_\infty(x)$, which is a
function in $L^2(\bR)$ because of the exponential bounds obtained for
$\vfi'_\infty (x)$ and  it is an eigenfunction of the operator $v\to
v''+ \left(f'(\vfi(x))+\frac{|\lambda|^2}{2} \right)v$ associated to the
eigenvalue $0$. But
this operator is selfadjoint and therefore there cannot exist a solution of
equation \eqref{selfadjoint-equation}. \hfill
\end{proof}

\bibliographystyle{amsplain}

\end{document}